\newtheorem{theorem}{Theorem}
\newtheorem{remark}{Remark}
\newtheorem{definition}{Definition}
\newtheorem{lemma}{Lemma}
\newtheorem{proposition}{Proposition}
\newtheorem{Op}{Open problem}
\numberwithin{equation}{section}
\renewcommand{\div}{\operatorname{div\,}}
\newcommand{\curl}{\operatorname{curl}}
\newcommand{\eps}{\varepsilon}
\newcommand{\R}{\mathbb R}
\title{External boundary control of the motion of a rigid body immersed in a perfect two-dimensional fluid}
\author{Olivier Glass\footnote{CEREMADE, UMR CNRS 7534, Universit\'e Paris-Dauphine, PSL Research University,
Place du Mar\'echal de Lattre de Tassigny, 75775 Paris Cedex 16, France}, J\'ozsef J. Kolumb\'an 
\footnote{Institut f\"ur Mathematik, Universit\"at Leipzig, D-04109, Leipzig, Germany.},
Franck Sueur\footnote{Univ. Bordeaux, CNRS, Bordeaux INP, IMB, UMR 5251,  F-33400, Talence, France  $\&$ Institut  Universitaire de France. }}
\date{}
\begin{document}

\maketitle

\begin{abstract}
We consider the motion of a rigid body immersed in a two-dimensional irrotational perfect incompressible fluid. 
The fluid is governed by the Euler equation, while the trajectory of the solid is given by Newton's equation, the force term corresponding to the fluid pressure on the body's boundary only.
The system is assumed to be confined in a bounded domain with an impermeable condition on a part of the external boundary.
The issue considered here is the following: is there an appropriate boundary condition on the remaining part of the external boundary (allowing some fluid going in and out the domain) 
such that the immersed rigid body is driven from  some given initial  position and velocity to some final position (in the same connected component of the set of possible positions as the initial position) and velocity in a given positive time, without touching the external boundary ?
In this paper we provide a positive answer to this question thanks to an  impulsive control strategy. To that purpose 
we make use of a reformulation of the solid equation into an ODE of geodesic form, with some force terms due to the circulation around the body (as in \cite{GMS}) and some extra terms  here due to the external boundary control.
\end{abstract}

\newpage
\tableofcontents
\newpage
%
%
%
%
%
%%%%%%%%%%%%%%%%%%%%%%%%%%%%%%%%%%%%%%%%%%%%%%%%%%%%%%%%%%%%%%%%%%%%%%%%%%%%%%%%%%%%%%%%%%%%%%%%%%%%%%%%
%
%
%
%
\section{Introduction and main result}
\label{INTRO}

\subsection{The model without control}
\label{INTRO-WC}

A simple model of fluid-solid evolution is that of a single rigid body surrounded by a perfect incompressible fluid. Let us describe this system. We consider a two-dimensional bounded, open, smooth and simply connected\footnote{The condition of simple connectedness is actually not essential and one could generalize the present result to the case where $\Omega$ is merely open and connected at the price of long but straightforward modifications.}  domain 
$\Omega \subset \mathbb{R}^{2}$. 
The domain $\Omega$  is composed of two disjoint parts: the open part ${\mathcal F}(t)$ filled with fluid and the closed part ${\mathcal S}(t)$ representing the solid. These parts depend on time $t$. Furthermore, we assume that ${\mathcal S}(t)$ is also smooth and simply connected. 
On the fluid part ${\mathcal F}(t)$, the velocity field ${u}:\left\{(t,x):\ t\in[0,T],\ x\in\overline{\mathcal{F}(t)} \right\}\rightarrow \mathbb{R}^2$ and the pressure field ${\pi}:\left\{(t,x):\ t\in[0,T],\ x\in\overline{\mathcal{F}(t)} \right\} \rightarrow \mathbb{R}$ satisfy the incompressible Euler equation:
\begin{eqnarray}\label{eu}
\begin{split}
\frac{\partial u}{\partial t}+(u\cdot\nabla)u + \nabla \pi =0 \   \text{ and } \ \div u = 0
\ \text{ for } \ t\in[0,T]   \text{ and  } x \in \mathcal{F}(t) .
\end{split}
\end{eqnarray}
We consider impermeability boundary conditions, namely, on the solid boundary, the normal velocity coincides with the solid normal velocity
\begin{equation} \label{bc1}
u \cdot n = u_{S}\cdot n\ \text{on}\ \partial \mathcal{S}(t),
\end{equation}
where $u_{S}$ denotes the solid velocity described below,
while on the outer part of the boundary we have
\begin{equation} \label{bc2}
u \cdot n =0\ \text{on}\ \partial \Omega,
\end{equation}
where ${n}$ is the unit outward normal vector on $\partial\mathcal{F}(t)$.
The solid ${\mathcal S}(t)$ is obtained by a rigid movement from ${\mathcal S}(0)$, and one can describe its position by the center of mass, ${h}(t)$, and the angle variable with respect to the initial position, ${\vartheta}(t)$. 
Consequently, we have
\begin{align}
\mathcal{S}(t)=h(t)+R(\vartheta(t)) (\mathcal{S}_0-h_0),
\end{align} 
where $h_0$ is the center of mass at initial time, and
$${R}(\vartheta)= \left( \begin{array}{cc}
\cos \vartheta & -\sin \vartheta\\
\sin \vartheta & \cos \vartheta\end{array} \right) .$$
Moreover the solid velocity is hence given by
\begin{align}
u_S (t,x)={h}'(t)+\vartheta'(t) (x-h(t))^{\perp},
\end{align} 
where for $x=(x_1,x_2)$ we denote $x^\perp  = (-x_2,x_1).$

The solid evolves according to Newton's law, and is influenced by the fluid's pressure on the boundary:
\begin{equation} \label{newt}
m  h'' (t) =  \int_{ \partial \mathcal{S} (t)} \pi \, n \, \, d\sigma  \ \text{ and } \
\mathcal{J}  \vartheta'' (t) =    \int_{ \partial   \mathcal{S} (t)} \pi \, (x-  h (t) )^\perp  \cdot n \, d \sigma.
\end{equation}
Here the constants $m>0$ and $\mathcal{J}>0$ denote respectively the mass and the
moment of inertia of the body, where the 
fluid is supposed to be homogeneous of density 1, without loss of generality.
Furthermore, the circulation around the body is constant in time, that is
\begin{align}\label{kelv}
\int_{\partial\mathcal{S}(t)} u(t) \cdot \tau \, d\sigma = \int_{\partial\mathcal{S}_0} u_0 \cdot \tau \, d\sigma = \gamma \in \mathbb{R},\ \forall t\geq 0,
\end{align}
due to Kelvin's theorem, where ${\tau}$ denotes the unit counterclockwise tangent vector.

\begin{figure}[ht]
\centering
\resizebox{1\linewidth}{!}{\includegraphics[clip, trim=-1.25cm 17cm 0.5cm 4cm, width=1.00\textwidth]{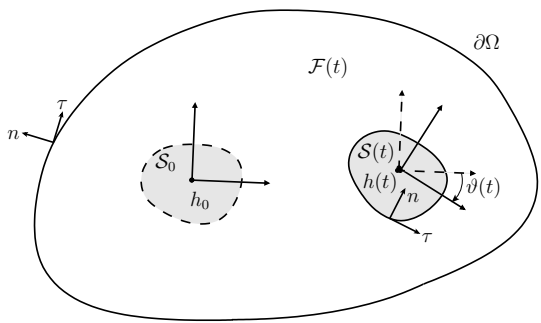}}
\caption{The domains $\Omega$, $\mathcal{S}(t)$ and $\mathcal{F}(t)=\Omega\setminus\mathcal{S}(t)$}
\end{figure}

The Cauchy problem for this system with initial data
\begin{eqnarray}\label{ic}
\begin{split}
u|_{t=0}=u_{0}\text{ for } x\in\mathcal{F}(0),\\
h(0)=h_{0},\ h'(0)=h'_0,\ \vartheta(0)=0,\ \vartheta'(0)=\vartheta'_0,
\end{split}
\end{eqnarray}
is now well-understood, see e.g. \cite{GLS-Mass,GS-Uniq,ht,ort1,ort2}. Furthermore, the 3D case has also been studied in \cite{GST,ros}. Note in passing that it is our convention used throughout the paper that $\vartheta(0)=0$.

In this paper, we will furthermore assume that the fluid is irrotational at the initial time, that is $\text{curl } u_0 = 0$ in $\mathcal{F}(0)$, which implies that it stays irrotational at all times, due to Helmholtz's third theorem, i.e.
\begin{align}\label{helm}
\text{curl } u  = 0\text{ for } x\in\mathcal{F}(t),\ \forall t\geq 0.
\end{align}
\subsection{The control problem and the main result}
\label{INTRO-C}

We are now in position to state our main result.

Our goal is to investigate the possibility of controlling the solid by means of a boundary control acting on the fluid. Consider  $\Sigma$ a nonempty, open part of the outer boundary $\partial \Omega$.
Suppose that one can choose some non-homogeneous boundary conditions on $\Sigma$. One natural possibility is due to Yudovich (see \cite{Yud}), which consists in prescribing on the one hand the normal velocity on $\Sigma$, i.e. choosing some function ${g}\in C_{0}^{\infty}([0,T]\times\Sigma)$ with $\int_\Sigma g = 0$ and imposing that
\begin{equation} \label{yud1}
u(t,x)\cdot n(x)=g(t,x)\ \text{on}\ [0,T]\times\Sigma,
\end{equation}
while on the rest of the boundary we have the usual impermeability condition
\begin{equation} \label{yud3}
u \cdot n =0\ \text{on}\ [0,T] \times (\partial \Omega \setminus \Sigma),
\end{equation}
and on the other hand the vorticity on the set $\Sigma^-$ of points of $[0,T] \times \Sigma$ where the velocity field points inside $\Omega$. Note that $\Sigma^-$ is deduced immediately from $g$.

Since we are interested in the vorticity-free case, we will actually consider here a null control in vorticity, that is
\begin{equation} \label{yud2}
\curl u(t,x)=0\ \text{on}\ \Sigma^{-} =\{(t,x)\in [0,T]\times\Sigma \  \text{ such that }  \ u(t,x)\cdot n(x)<0\}.
\end{equation}
Condition \eqref{yud2} enforces the validity of  \eqref{helm} as in the uncontrolled setting despite the fact that some fluid is entering the domain.

The general question of this paper is how to control the solid's movement by using the above boundary control (that is, the function $g$). In particular we raise the question of driving the solid from a given position and a given velocity to some other prescribed position and velocity. Remark that we cannot expect to control the fluid velocity in the situation described above: for instance, Kelvin's theorem gives an invariant of the dynamics, regardless of the control.

Throughout this paper we will only consider solid trajectories which stay away from the boundary. Therefore we introduce
$$\mathcal{Q}=\{ q := (h,\vartheta)\in\Omega\times\mathbb{R}:\ d(h+R(\vartheta)(\mathcal{S}_0-h_0),\partial\Omega)>0\}.$$
Furthermore, let us from here on set
$$\mathcal{D}_T:=\left\{(t,x):\ t\in[0,T],\ x\in\overline{\mathcal{F}(t)} \right\},$$
where we have omitted from the notation the dependence on $\mathcal{F}(\cdot)$, and therefore on the unknown $(h,\vartheta)(\cdot)$.

The main result of this paper is the following statement.
\begin{theorem} \label{main}
Let $T>0$.
Consider $\mathcal{S}_0\subset\Omega$ bounded, closed, simply connected with smooth boundary, which is not a disk, and $u_{0}\in C^{\infty}(\overline{\mathcal{F}(0)};\mathbb{R}^{2})$, $\gamma\in\mathbb{R}$, 
$q_0=(h_{0},0), q_1 =(h_{1},\vartheta_1) \in \mathcal{Q}$, 
$h'_0,h'_1\in\mathbb{R}^2,\vartheta'_0,\vartheta'_1\in\mathbb{R}$, such that $(h_0,0)$ and $(h_1,\vartheta_1)$ belong to the same connected component of $\mathcal{Q}$ and
\begin{gather*}
\div u_0=\curl u_0=0 \text{ in }\mathcal{F}(0), \ 
u_0 \cdot n =0 \text{ on } \partial \Omega, \\
u_0 \cdot n = (h'_0+\vartheta'_0 (x-h_0)^{\perp}) \cdot n \text{ on } \partial \mathcal{S}_0, \ 
\int_{\partial\mathcal{S}_0} u_0 \cdot \tau \, d\sigma =\gamma.
\end{gather*}
Then there exists a control $g\in C_{0}^{\infty}((0,T)\times\Sigma)$ and a solution
 $(h,\vartheta,u)\in C^{\infty}([0,T];\mathcal{Q})\times C^{\infty}(\mathcal{D}_T;\mathbb{R}^{2})$ 
 to  
 \eqref{eu}, \eqref{bc1}, \eqref{newt},  \eqref{kelv}, \eqref{ic}, \eqref{helm}, \eqref{yud1}, \eqref{yud3}, which satisfies
$(h,h',\vartheta,\vartheta')(T)=(h_1,h'_1,\vartheta_1,\vartheta'_1)$.
\end{theorem}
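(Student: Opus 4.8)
The plan is to use an impulsive (fast) control strategy: we let the control $g$ act on a very short time interval $[0,\delta]$ with a large amplitude so that the fluid term in Newton's equation is negligible compared to the control-induced motion, while on the remaining time $[\delta,T]$ we either let the system evolve freely or use a second impulse at the end. More precisely, I would first reformulate the solid equation. Following \cite{GMS}, in the irrotational case the pressure can be eliminated via a potential decomposition: write $u = \nabla \Phi$ where $\Phi$ depends on $(q,q')$ through the Kirchhoff potentials and on the circulation $\gamma$ and on the control $g$. Plugging this into \eqref{newt} and using the structure of the Euler equation, one obtains an ODE of the form
\begin{equation*}
(\mathcal{M}(q) + \mathcal{M}_a(q))\, q'' + \langle \Gamma(q), q', q' \rangle = F_\gamma(q) + F_g(t,q,q'),
\end{equation*}
where $\mathcal{M}$ is the genuine mass matrix, $\mathcal{M}_a(q)$ is the (symmetric, positive) added-mass matrix coming from the fluid, $\Gamma$ are the associated Christoffel symbols so that the left-hand side is the covariant acceleration for the metric $\mathcal{M}+\mathcal{M}_a$, $F_\gamma$ is the Kutta–Joukowski-type force due to the circulation, and $F_g$ is the new forcing term generated by the boundary control $g$. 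The key qualitative fact to establish is that $F_g$ can be prescribed rather freely: by choosing $g$ appropriately one can make the net impulse $\int_0^\delta F_g \, dt$ approximate any desired vector in $\mathbb{R}^3$, because the harmonic extension of $g$ produces a velocity field whose associated momentum flux on $\partial\mathcal{S}$ spans all directions (here the hypothesis that $\mathcal{S}_0$ is not a disk is presumably not needed for this part but for controllability of the angular component; in fact the ``not a disk'' assumption should be what guarantees the rotational component is reachable).

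Second, I would set up the geometry of reachable configurations. Since $q_0$ and $q_1$ lie in the same connected component of $\mathcal{Q}$, fix a smooth path $\bar q : [0,1] \to \mathcal{Q}$ joining them, staying at positive distance from $\partial\Omega$. The strategy is then: (i) give an initial impulse on $[0,\delta]$ bringing the velocity $q'$ to a large value $\sim \lambda \,\bar q'$ pointing along the path (rescaled in time), (ii) let the solid drift along (a reparametrization of) $\bar q$ during $[\delta, T-\delta]$ — here one must check that during this nearly-ballistic phase the trajectory stays $C^0$-close to the prescribed path $\bar q$, so it remains in $\mathcal{Q}$, which follows from a Gronwall estimate once $\lambda$ is large and the time is rescaled accordingly, (iii) apply a final braking impulse on $[T-\delta, T]$ to set the terminal velocity to exactly $(h'_1, \vartheta'_1)$. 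To make this rigorous one rescales time: on the fast phases, set $s = t/\eps$, so the fluid inertia and the circulation force become $O(\eps)$ perturbations of the pure control dynamics, and a fixed-point / implicit-function argument yields the exact matching of the six endpoint conditions (position and velocity, three components each) by solving for the finitely many free parameters in $g$.

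Third comes the regularity and the precise construction of $g$ as a genuine function in $C_0^\infty((0,T)\times\Sigma)$ with zero mean. I would take $g$ of the form $g(t,x) = \sum_j \alpha_j(t)\, \phi_j(x)$ for a finite family of fixed profiles $\phi_j \in C_0^\infty(\Sigma)$ with $\int_\Sigma \phi_j = 0$, and time-amplitudes $\alpha_j$ supported in the short intervals; the harmonic field generated has an explicit smooth dependence on the $\alpha_j$ and on $q$, so the resulting solid ODE has smooth coefficients, and classical ODE theory gives a smooth trajectory $(h,\vartheta)$, hence a smooth domain $\mathcal{D}_T$, hence (reconstructing $u = \nabla\Phi$ and $\pi$ from the Bernoulli law) a smooth solution of the full system. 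One must also verify \eqref{yud2}/\eqref{helm}: since we impose zero inflow vorticity and start irrotational, $\curl u \equiv 0$ is preserved, so $u$ is genuinely recovered as a gradient throughout.

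The main obstacle, I expect, is step two combined with the controllability of $F_g$: one must show that the \emph{net momentum transferred} by the control over a short impulse can be steered in all three directions of $(h, \vartheta)$-space, uniformly as the impulse shrinks, and that the leftover ``drift'' phase can be made to follow an arbitrary path in the connected component while remaining away from $\partial\Omega$. The translational directions should be reachable by elementary choices of $\phi_j$ (creating a net flux pushing the body), but the angular momentum transfer is delicate and is exactly where the hypothesis that $\mathcal{S}_0$ is not a disk enters: for a disk the added-mass and pressure-torque structure is degenerate and no control torque can be generated this way, whereas for a non-disk one can produce a nonzero torque. Quantifying this non-degeneracy — i.e.\ producing an explicit lower bound on the reachable torque in terms of the shape of $\mathcal{S}_0$ and showing it survives the $\delta \to 0$ limit after time rescaling — is the technical heart of the argument.
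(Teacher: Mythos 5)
Your reformulation of Newton's equations into a geodesic-type ODE with the added-mass matrix $\mathcal{M}_a(q)$ and Christoffel symbols, your impulsive-control ansatz $g=\sum_j\alpha_j(t)\phi_j(x)$ with short bursts near $t=0$ and $t=T$, and your diagnosis that the non-disk hypothesis is what makes the rotational degree of freedom reachable are all in line with the paper's argument. The paper makes this last point precise by showing that $\mathcal{S}_0$ not being a disk is equivalent to $\text{coni}\{(n(x),(x-h_0)^\perp\cdot n(x)):x\in\partial\mathcal{S}_0\}=\mathbb{R}^3$, and then constructs harmonic potentials with the required boundary data via Runge's theorem.

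The proposal breaks down at step (ii), the ``nearly-ballistic drift.'' With the control off, the dynamics on $[\delta,T-\delta]$ is the geodesic flow of the metric $\mathcal{M}_g+\mathcal{M}_a(q)$; since $\langle\Gamma(q),q',q'\rangle$ is homogeneous of degree two in $q'$, multiplying the initial velocity by a large $\lambda$ does not flatten or straighten the trajectory in any way — it traces exactly the same geometric geodesic, just $\lambda$ times faster. There is therefore no large-$\lambda$ regime in which the free flight tracks an arbitrarily prescribed smooth path $\bar q$, and no Gronwall estimate will make it do so: unless $\bar q$ happens to be a geodesic of $\mathcal{M}_g+\mathcal{M}_a$, the body leaves it immediately, and you have no control over whether it stays in $\mathcal{Q}$. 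The paper resolves this by going in precisely the opposite direction: it first reduces (by the same time-rescaling invariance you noticed, together with a compactness covering of the connecting curve by finitely many small balls and gluing the resulting controlled solutions) to the case where $q_1$ is close to $q_0$ and the velocities and circulation are \emph{small}. In that local regime a unique geodesic of $\mathcal{M}_g+\mathcal{M}_a$ joining $q_0$ to $q_1$ in time $T$ exists, the uncontrolled flight genuinely follows it, and the two impulses are aimed so as to match the velocity to the geodesic's endpoint velocities, namely $v_0=(\mathcal{M}_g+\mathcal{M}_a(q_0))(\bar q'(0)-q'_0)$ and $v_1=-(\mathcal{M}_g+\mathcal{M}_a(q_1))(\bar q'(T)-q'_1)$. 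A secondary discrepancy: since the impulsive control only yields approximate controllability (the geodesic is followed up to an error vanishing with $\varepsilon$ and $|\gamma|$), the paper passes to exact controllability via a Brouwer-type degree argument on the continuous map $(q_1,q'_1)\mapsto(q,q')(T)$, rather than the implicit function theorem, which avoids having to establish differentiable dependence on the control parameters.
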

%
%
%Note that there is a slight abuse of notation in writing $C^{\infty}([0,T]; C^{\infty}(\overline{\mathcal{F}(t)};\mathbb{R}^{2}))$, since the domain in which the fluid evolves is also time-dependent.
%
\begin{figure}[ht]
\centering
\resizebox{1\linewidth}{!}{\includegraphics[clip, trim=-1.25cm 16cm 0.5cm 4cm, width=1.00\textwidth]{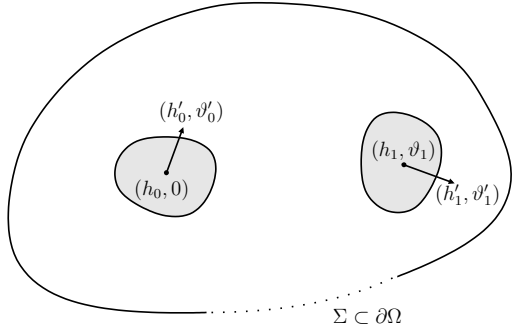}}
\caption{The initial and final positions and velocities in the control problem}
\end{figure}
\begin{remark}
  \label{smallflux}
In Theorem~\ref{main} the control $g$ can be chosen with an arbitrary small total flux through $\Sigma^{-}$, that is for any $T>0$, 
 for any $\nu > 0$,  there exists a control $g$ and a solution $(h,\vartheta,u)$ satisfying the properties of Theorem  \ref{main}  and such that moreover 
$$ \left\vert \int_0^T \int_{\Sigma^{-}} \, u \cdot n  \, \, d\sigma  dt \right\vert < \nu .$$
See Section~\ref{REM-smallflux} for more explanations.
Let us mention that such a small flux condition cannot be guaranteed in the results \cite{Coron:EC,OG-Addendum,OG:E3} regarding the controllability of the Euler equations.
\end{remark}

When $\mathcal{S}_0$ is a disk, the second equation in (\ref{newt}) becomes degenerate, so it needs to be treated separately. For instance, in the case of a homogeneous disk, i.e. when the center of mass coincides with the center of the disk and we have $(x-h(t))^\perp \cdot n =0$, for any $x\in \partial\mathcal{S}(t),\ t\geq 0$, hence we cannot control $\vartheta$. However,
we have a similar result for controlling the center of mass $h$.
\begin{theorem}\label{dmain}
Let $T>0$.
Given a homogeneous disk $\mathcal{S}_0\subset\Omega$, $u_{0}\in C^{\infty}(\overline{\mathcal{F}(0)};\mathbb{R}^{2})$, $\gamma\in\mathbb{R}$, 
$h_{0},h_{1}\in\Omega,\ h'_0,h'_1\in\mathbb{R}^2,$ such that $(h_0,0)$ and $(h_1,0)$ are in the same connected component of $\mathcal{Q}$, and
$\div u_0=\curl u_0=0\ \text{in}\ \mathcal{F}(0)$, 
$u_0 \cdot n =0$ on $ \partial \Omega$, 
$u_0 \cdot n = h'_0\cdot n$ on $ \partial \mathcal{S}_0$, 
$\int_{\partial\mathcal{S}_0} u_0 \cdot \tau \, d\sigma =\gamma$,
there exists $g\in C_{0}^{\infty}((0,T) \times\Sigma)$ and a solution $(h,u)$ in $C^{\infty}([0,T];\Omega)\times C^{\infty}(\mathcal{D}_T;\mathbb{R}^{2})$ of (\ref{eu}), (\ref{bc1}), (\ref{newt}), (\ref{kelv}), (\ref{helm}), (\ref{yud1}), (\ref{yud3}), (\ref{yud2}) with initial data $(h_0,h'_0,u_0)$, which satisfies
$(h,h')(T)=(h_1,h'_1)$.
\end{theorem}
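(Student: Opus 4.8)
## Proof Strategy for Theorem~\ref{dmain}

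\textbf{Reduction to an ODE of geodesic type.} The plan is to mimic the approach already announced for Theorem~\ref{main}, adapting it to the degenerate case of the homogeneous disk. First I would fix the domain geometry: since $\mathcal{S}_0$ is a homogeneous disk, $\vartheta$ plays no dynamical role (the torque vanishes identically because $(x-h)^\perp\cdot n=0$ on $\partial\mathcal{S}(t)$), so the solid configuration is just $h\in\Omega$ and the state to be controlled is $(h,h')\in\Omega\times\R^2$. As in \cite{GMS}, I would decompose the fluid velocity as $u=\nabla\varphi$ where $\varphi$ solves a Neumann problem with boundary data coming from $h'\cdot n$ on $\partial\mathcal{S}$, from $g$ on $\Sigma$, and from $0$ on $\partial\Omega\setminus\Sigma$, plus a circulation term $\gamma$ carried by the harmonic field generated by the (fixed) hole $\mathcal{S}$. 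Substituting this representation into Newton's law \eqref{newt} and using standard potential-theoretic identities, the equation for $h$ becomes an ODE of the form
\begin{equation*}
\bigl(\mathcal{M}(h)\,h'\bigr)' = \langle \Gamma(h), h',h'\rangle + \gamma^2 E(h) + \gamma F(h,h') + \text{(control terms depending on $g$ and its time derivatives)},
\end{equation*}
where $\mathcal{M}(h)=m\,\mathrm{Id}+\mathcal{A}(h)$ is the (positive-definite) mass-plus-added-mass tensor, $\Gamma$ collects the Christoffel-type quadratic terms, and the $\gamma$-dependent terms are the Kutta--Joukowski-type forces. The key structural point to extract carefully from \cite{GMS} is that in the \emph{absence} of control, this ODE is exactly the geodesic equation (with forcing) for the metric $\mathcal{M}(h)$ on $\Omega$, and in particular it is well-posed locally in time and conserves an energy.

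\textbf{The impulsive control scheme.} The control term is the crucial new ingredient. I would choose $g$ of the form $g(t,x)=\sum_j \dot\alpha_j(t)\,\sigma_j(x)$ or, more to the point, concentrate $g$ in a very short time interval so that its effect on the solid is, to leading order, an instantaneous \emph{impulse}: integrating Newton's law across a small time window $[t_0,t_0+\eps]$ during which $g$ is active, the dominant contribution is a jump in the generalized momentum $\mathcal{M}(h)h'$ of size $\int \partial_t(\text{potential of }g)\,n\,d\sigma$ roughly, which can be prescribed by choosing the time profile and spatial shape of $g$ appropriately, while the displacement of $h$ over $[t_0,t_0+\eps]$ is $O(\eps)$ and hence negligible. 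The strategy is then: (i) use one (or two) such impulses near $t=0$ to adjust the initial velocity $h'_0$ to a convenient value; (ii) let the system evolve freely (i.e.\ with $g=0$) along a geodesic-type trajectory of the metric $\mathcal{M}$ that connects a neighborhood of $h_0$ to a neighborhood of $h_1$ inside the connected component of $\mathcal{Q}$ — here one exploits that any two points in the same connected component can be joined by a smooth path staying away from $\partial\Omega$, and that by rescaling time/energy one can traverse it in the allotted time; (iii) use one (or two) final impulses near $t=T$ to correct both the terminal position and the terminal velocity to exactly $(h_1,h'_1)$. The matching of endpoints is handled by a local inversion / fixed-point argument: the map sending the impulse parameters to the terminal state $(h,h')(T)$ is shown to be (locally) surjective onto a neighborhood of the target, using that the number of free impulse parameters is at least $4=\dim(\Omega\times\R^2)$ and a non-degeneracy computation for the linearized impulse-to-state map.

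\textbf{Verifying the Yudovich/vorticity-free conditions and regularity.} One then has to check that the velocity field reconstructed from $h$, $\gamma$ and $g$ is genuinely a smooth solution of the full system \eqref{eu}, \eqref{bc1}, \eqref{newt}, \eqref{kelv}, \eqref{helm}, \eqref{yud1}, \eqline? — here note $g\in C_0^\infty((0,T)\times\Sigma)$ is chosen with $\int_\Sigma g(t,\cdot)=0$ for each $t$ so that the Neumann problem is solvable, and the irrotationality condition \eqref{yud2} on $\Sigma^-$ holds \emph{automatically} because the constructed $u$ is a gradient, hence curl-free everywhere in $\mathcal{F}(t)$. The map $t\mapsto h(t)$ stays in $\Omega$ with $\mathcal{S}(t)$ away from $\partial\Omega$ by construction (the free segment is a fixed path with a positive distance to the boundary, and the impulsive segments are short enough that $h$ moves by less than this distance). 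Smoothness of $u$ in $\mathcal{D}_T$ follows from elliptic regularity for the Neumann problem together with smoothness of $h$ and $g$, the latter being arranged by taking $g$ smooth and compactly supported in time with the impulses realized by smooth bump profiles rather than genuine Dirac masses.

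\textbf{Main obstacle.} The hardest part is step~(iii) together with the non-degeneracy of the impulse-to-state map: one must show that a small number of smooth, almost-instantaneous boundary flux pulses can independently prescribe all four terminal quantities $(h,h')(T)$, which requires a careful analysis of how an impulse of a given spatial profile $\sigma_j$ on $\Sigma$ translates, through the potential $\varphi$ and the integrals in \eqref{newt}, into a jump of the solid momentum, and then checking that suitable choices of the $\sigma_j$ span all of $\R^2$ in momentum \emph{and} that a second round of pulses can additionally move the position — i.e.\ controllability of the linearized (impulsive) control system. A secondary but genuine difficulty, inherited from the disk case, is that since we cannot act on $\vartheta$, we must make sure the decoupling of the $\vartheta$-equation is exact (it is, by the homogeneity of the disk), and that the added-mass tensor $\mathcal{A}(h)$ for a disk — which is in fact a scalar multiple of the identity and \emph{independent of $h$} as long as $\mathcal{S}(t)$ stays in the interior — still leaves enough nonlinearity/forcing in the $h$-equation for the geodesic argument; here the circulation term $\gamma$ and the image-vortex interaction with $\partial\Omega$ (the "Kirchhoff--Routh" potential) provide the needed position-dependence, though in the present impulsive approach one can largely bypass this by keeping the free-motion segment as a prescribed geodesic-like curve and letting the impulses do the endpoint matching.
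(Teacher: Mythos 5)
Your high-level architecture tracks the paper's: reformulate Newton's equation as a geodesic-type ODE in $h$, use short-time boundary pulses to reorient the momentum onto a geodesic of the metric $m\,\mathrm{Id}+\mathcal{M}_a(h)$ connecting (a discretized chain of) points from $h_0$ to $h_1$, then correct at $t=T$, and promote approximate controllability to exact via a Brouwer/degree-type argument. That matches Sections~\ref{RELO}--\ref{S4}, specialised to the disk via Proposition~\ref{dfarcontr}.

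However, you have misidentified the \emph{mechanism} that produces the impulse, and this is the central idea of the proof. You attribute the momentum jump to the term in Newton's law that is linear in $\partial_t\alpha$ (the $F_2$ term), writing that the dominant contribution is ``$\int\partial_t(\text{potential of }g)\,n\,d\sigma$.'' But for a control $g$ compactly supported in a short window, the time integral of that linear term is essentially zero (after accounting for the small variation of $q$), since it is a total derivative; in fact the paper imposes the extra constraint \eqref{csR} precisely so that this term is \emph{harmless}, not so that it is useful. The genuine impulse is produced by the term $F_{1,a}(q)[\alpha]$ which is \emph{quadratic} in $\nabla\alpha$ (hence in $g$); this is why the time profile in \eqref{gcontrol} is taken to be $\beta_\varepsilon$, a smooth approximate \emph{square root} of the Dirac mass, so that $\beta_\varepsilon^2$ is an approximate identity and \eqref{vendredi} yields the desired $v_0\,\delta_0+v_1\,\delta_T$ at the level of the forcing. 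Without recognizing the quadratic nature of the force, the whole design of $\overline g$ in Propositions~\ref{farcontr}/\ref{dfarcontr} (which constructs harmonic $\overline\alpha$ with $\int_{\partial\mathcal{S}}|\nabla\overline\alpha|^2\,\partial_n\Phi\,d\sigma$ prescribed) cannot be motivated, and the positivity constraint encoded in the conical hull / the nonnegative weights $\mu_i$ of Lemma~\ref{kup1} becomes invisible.

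A secondary but genuine error: the added-mass tensor $\mathcal{M}_a(h)$ for a disk in a \emph{bounded} domain $\Omega$ is \emph{not} independent of $h$. Position-independence (and proportionality to the identity) holds only in the whole plane; in $\Omega$ the Kirchhoff potentials solve a Neumann problem on $\mathcal{F}(q)=\Omega\setminus\mathcal{S}(q)$ whose solution depends on the distance to $\partial\Omega$, so $\mathcal{M}_a$ varies with $h$ and the Christoffel symbols are nontrivial. This position dependence is precisely what makes the uncontrolled dynamics a nontrivial geodesic flow on $\mathcal{Q}$; it is not supplied by the circulation or the Kirchhoff--Routh potential, and the paper's strategy in any case sends $\gamma\to 0$ by rescaling so those terms are perturbative.
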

The proof is similar to that of Theorem \ref{main}, with the added consideration that $(x-h(t))^\perp \cdot n =0$, for any $x\in \partial\mathcal{S}(t),\ t\geq 0$. We therefore omit the proof.
In the case where the  disk is non-homogeneous the analysis is technically more intricate already in the uncontrolled setting, see \cite{GMS}, and we will let aside this case in this paper.

\ \par \noindent
{\bf References.} Let us mention a few results of boundary controllability of a fluid alone, that is without any moving body. The problem is then finding a boundary control which steers the fluid velocity from $u_0$ to some prescribed state $u_1$.
For the incompressible Euler equations 
small-time global exact boundary controllability  has been obtained in \cite{Coron:EC,OG:E3} in the  2D, respectively 3D case. 
This result has been recently extended to the case of the incompressible Navier-Stokes equation with Navier slip-with-friction boundary conditions in  \cite{CMS}, see also \cite{CMS2}  for a gentle exposition.
Note that the proof there relies on the previous results for the Euler equations by means of a rapid and strong control which drives the system in a high Reynolds regime. 
This strategy was initiated in  \cite{Coron:NS},  where an interior controllability result was already established. 
For ``viscous fluid + rigid body'' control systems (with Dirichlet boundary conditions), local controllability results have already been obtained in both 2D and 3D, see e.g. \cite{BG, BO, IT}. These results rely on Carleman estimates on the linearized equation, and consequently on the parabolic character of the fluid equation. 

A different type of fluid-solid control result can be found in \cite{G-R}, where the fluid is governed by the two-dimensional Euler equation. However in this paper the control is located on the solid's boundary which makes the situation quite different. 

Actually, the results of Theorem \ref{main}  and  Theorem \ref{dmain} can rather be seen as some extensions to the case of an immersed body of 
the results \cite{OGTH,OGTH2,OGTH3}  concerning Lagrangian controllability of the incompressible Euler and Stokes equations, where the control takes the same form as here. 

\subsection{Generalizations and open problems}
First, as we mentioned before, using the techniques of this paper, the result could be straightforwardly generalized for non simply connected domains. One could also manage in the same way the control of several solids (the reader may in particular see that the argument using Runge's theorem in Section~\ref{S4} is local around the solid). 

We would also like to underline that the absence of vorticity is not central here. This may surprise the reader acquainted with the Euler equation, but actually following the arguments of Coron \cite{Coron:EC,Coron:NS}, one knows how to control the full model when one can control the irrotational one. This is by the way the technique that we use to take care of the circulation $\gamma$ (see in particular Section~\ref{RELO}). But the presence of vorticity makes a lot of complications from the point of view of the initial boundary problem, in particular for what concerns the uniqueness issue, see Yudovich \cite{Yud}. To avoid these unnecessary technical complications, we restrain ourselves to the irrotational problem. But the full problem could undoubtedly be treated in the same way.

Furthermore, one might ask the question whether or not it is possible to control with a reduced number of controls, i.e. to only look for controls $g$ which take the form of a linear combination of some a priori given controls $\{g_i\}_{i=1,\ldots,I}$, which may depend on the geometry, but not the initial or final data of the control problem. We consider that our methods can be adapted to prove such a result, in particular since in Section \ref{RELO} we prove that Theorem \ref{main} follows from a simpler result, Theorem \ref{lmain}, where the solid displacement, the solid velocities and the circulation are small. It then suffices to discretize the control with respect to the parameters $(h_0,h'_0,\vartheta_0,\vartheta'_0)$, $(h_1,h'_1,\vartheta_1,\vartheta'_1)$ and $\gamma$. This does not pose a problem since our control is actually constructed continuously with respect to these parameters, so one may apply a compactness argument. 
However, the set of controls $\{g_i\}_{i=1,\ldots,I}$ will depend on the parameter $\delta>0$ from Theorem \ref{lmain}, used to restrict the set of admissible positions $\mathcal{Q}$ to the set $\mathcal{Q}_\delta$ defined in \eqref{qd}. This subtlety is due to the fact that the closure of $\mathcal{Q}$ also contains points where the solid touches the outer boundary, while this is no longer the case with $\mathcal{Q}_\delta$ for a given fixed $\delta>0$, and we use this for the compactness argument mentioned above.

\par
There remain also many open problems.

Considering the recent progresses on the controllability in the viscous case, a natural question is whether or not the results in this paper could be adapted to the case where a rigid body is moving in a fluid driven by the incompressible Navier-Stokes equation. In \cite{NSs} we extend the analysis performed here to prove the small-time global controllability of the motion of a rigid body in a viscous incompressible fluid, driven by the incompressible Navier-Stokes equation, in the case where Navier slip-with-friction boundary conditions are prescribed at the interface between the fluid and the solid. However, the case of Dirichlet boundary conditions remains completely open.

Let us mention the following open problem regarding the motion planning of a rigid body immersed in an inviscid incompressible irrotational flow. 
\begin{Op}\label{motionplanning}
Let $T>0$, $(h_{0},0)$ in $\mathcal{Q}$, 
 $\xi$ in $C^{2}([0,T]; \mathcal{Q})$, with  $\xi(0)= (h_0,0)$.
 Let us decompose $\xi'(0)$ into $\xi'(0)= (h'_0 , \vartheta'_0)$. 
 Consider $\mathcal{S}_0\subset\Omega$ bounded, closed, simply connected with smooth boundary, which is not a disk, 
  $\gamma\in\mathbb{R}$, 
 and 
  $u_{0}\in C^{\infty}(\overline{\mathcal{F}(0)};\mathbb{R}^{2})$ such that 
  $\div u_0=\curl u_0=0 $ in $\mathcal{F}(0)$,
$u_0 \cdot n =0$ on $ \partial \Omega$, 
$u_0 \cdot n = (h'_0+\vartheta'_0 (x-h_0)^{\perp})\cdot n$ on $\partial \mathcal{S}_0$ and 
$\int_{\partial\mathcal{S}_0} u_0 \cdot \tau \, d\sigma =\gamma$.
 Do there exist $g\in C_{0}([0,T]\times\Sigma)$ and a solution
 $(h,\vartheta,u)\in C^{2}([0,T];\mathcal{Q})\times C^{1}(\mathcal{D}_T;\mathbb{R}^{2})$
 to  
 (\ref{eu}), (\ref{bc1}), (\ref{newt}), (\ref{kelv}), (\ref{ic}), (\ref{helm}), (\ref{yud1}),  (\ref{yud3}), which satisfies
$  \xi = (h,\vartheta) $?
\end{Op}
Even the approximate motion planning in $C^2$, i.e. the same statement as above but with 
$  \|  \xi - (h,\vartheta) \|_{C^{2}([0,T])} \leq \eps$ (with $\varepsilon>0$ arbitrary) instead of $\xi = (h,\vartheta)$, is an open problem. 

Furthermore, in this paper we have ignored any possible thermodynamic effect in the model, however, it would be a natural question to ask how our results could be generalized to the case when the fluid is heat-conductive.

 %it would also be interesting from a physical point of view to investigate the case when the fluid is heat-conductive.

\subsection{Plan of the paper and main ideas behind the proof of Theorem \ref{main}} % \ \par
 The paper is organized as follows.
 
 \bigbreak
In Section \ref{NODE} we first recall from  \cite{GMS}  a reformulation of the Newton equations \eqref{newt} as an ODE in the uncontrolled case and then extend it to the case with control. 

To be more precise, denoting $q:=(h,\vartheta)$ and considering a manifold of admissible positions $\mathcal{Q}$ (to be defined later), the authors proved in \cite{GMS} that there exist a field $\mathcal{M}:\mathcal{Q}\to S^{++}(\mathbb{R}^3)$ of symmetric positive-definite matrices and smooth fields $E,B:\mathcal{Q}\to\mathbb{R}^3$, such that the fluid-solid system is equivalent to the following ODE in $q$:
$$\mathcal{M}(q)q''+\langle \Gamma(q),q',q'\rangle=\gamma^2 E(q)+\gamma q' \times B(q),$$
where $\Gamma(q)$ is a bilinear symmetric mapping, given by the so-called Christoffel symbols of the first kind:
$$\Gamma^k_{i,j}  =\frac{1}{2}\left( \frac{\partial (\mathcal{M})_{k,j} }{\partial q_i}  + \frac{\partial (\mathcal{M})_{k,i} }{\partial q_j} -\frac{\partial (\mathcal{M})_{i,j} }{\partial q_k} \right)  .$$
In particular, the case with zero circulation represents the fact that the particle $q$ is moving along the geodesics associated with the Riemannian metric induced on $\mathcal{Q}$ by the so-called total inertia matrix $\mathcal{M}$.

We extend the above result to the case with control $g\in C_{0}^{\infty}([0,T]\times\Sigma)$, to find that $q$ satisfies the following ODE:
\begin{align}\label{renform00}
\mathcal{M}(q)q''+\langle \Gamma(q),q',q'\rangle=\gamma^2 E(q)+\gamma q' \times B(q)+ F_1 (q,q', \gamma) [ \alpha] +   F_2  (q) [\partial_t\alpha],
\end{align}
where $F_1$ and $F_2$ are regular, respectively $\alpha$ is defined as the unique smooth solution of the Neumann problem
\begin{align}\label{pot00}
\Delta \alpha =0\ \text{in}\  \mathcal{F}(t) \quad    \text{ and } \quad 
\partial_{n} \, \alpha=g\mathbbm{1}_{\Sigma}\ \text{on}\ \partial\mathcal{F}(t) ,
\end{align}
with zero mean.

Note that in both cases above, the fluid velocity $u$ can be recovered by solving some simple elliptic PDEs.

\bigbreak
   
In Section \ref{RELO} we prove that Theorem \ref{main} can be deduced from a simpler result, namely Theorem \ref{lmain}, where  the solid displacement, the initial and final solid velocities and the circulation are assumed to be small. 

This is achieved on one hand by using the usual time-rescale properties of the Euler equation in order to pass from arbitrary solid velocities and circulation to small ones. More precisely, if $u(t,\cdot)$ is a solution to the Euler equation on $[0,T]$, then for any $\lambda>0$, $u^\lambda(t,\cdot):=\frac{1}{\lambda}u\left( \frac{t}{\lambda},\cdot\right)$ is a solution to the Euler equation on the time interval $[0,\lambda T]$. The corresponding scaling for the initial and final solid velocities and the circulation associated with $u^\lambda$ becomes  $\frac{q'_0}{\lambda}$,  $\frac{q'_1}{\lambda}$ and $ \frac{\gamma}{\lambda}$. Hence, if one can find a solution with small initial and final velocities and small circulation on $[0,T]$, one can pass to the arbitrary (or large) case on $[0,\lambda T]$ with $\lambda \in (0,1)$ small enough, thus obtaining the controllability result in smaller time. There are multiple possibilities for using up the remaining time from $\lambda T$ to $T$, and we give one in Section \ref{RELO}, relying on the time-reversal properties of the Euler equation.

On the other hand,  one may use a compact covering argument to pass from the case when $q_0$ and $q_1$ are remote to the case when their distance is small.

\bigbreak

In Section \ref{REAP}  we prove that  another reduction is possible, as we prove that an approximate controllability result (rather than an exact one), namely Theorem \ref{approx}, allows to deduce Theorem \ref{lmain}.

Indeed, if instead of $(q,q')(T)=(q_1,q'_1)$ one only has $\|(q,q')(T)-(q_1,q'_1)\|\leq \eta$, for $\eta>0$ small enough, then it is possible to pass to exact controllability by using a Brouwer-type topological argument. However, for such a result to be applied, one has to make sure that the map $(q_1,q'_1)\mapsto (q,q')(T)$ is well-defined and continuous for 
$(q_1,q'_1)$ in some small enough ball, which we will indeed achieve during our construction.

\bigbreak
 
Section \ref{proof-approx} is devoted to the proof of Theorem \ref{approx} and is the core of the paper. In order to achieve the aforementioned approximate controllability, we rely on the following strategy.

Suppose we have $\gamma=0$ (if this is not the case, one can at least expect to be close in some sense to the case without circulation when $\gamma$ is small enough), and suppose that we can find some appropriate control $g\in  C^{\infty}_0 ([0,T]; \mathcal C)$
such that the term $F_1 (q,q', 0) [ \alpha] +   F_2  (q) [\partial_t\alpha]$ in \eqref{renform00} behaves approximately like $v_0 \delta_0(t) + v_1 \delta_T(t)$, for any given $v_0,v_1\in\mathbb{R}^3$, where $\delta_0$ and $\delta_T$ denote the Dirac distributions at time $t=0^+$, respectively $t=T^-$.

Then, \eqref{renform00} is going to be close (in an appropriate sense) to the following formal toy model:
 \begin{align}\label{toy00}
\mathcal{M}(\tilde{q})  \tilde{q}''+\langle \Gamma(\tilde{q}),\tilde{q}',\tilde{q}'\rangle = v_0 \delta_0 + v_1 \delta_T,
\end{align}
and controlling \eqref{renform00} (at least approximately) reduces to controlling \eqref{toy00} by using the vectors $v_0,v_1\in\mathbb{R}^3$ as our control. In fact, we consider a control of the form 
\begin{align}\label{ctrl00}
g(t,x)=\beta_0(t)\bar{g}_0(x)+\beta_1(t)\bar{g}_1(x),
\end{align}
where the functions $\beta_0,\beta_1$ are chosen as square roots of sufficiently close smooth approximations of $\delta_0,\delta_T$ (since it turns out that $F_1$ depends quadratically on $\alpha$, and by consequence also on $g$, see \eqref{pot00}), and with some appropriate functions $\bar{g}_0,\bar{g}_1$.

Let us quickly explain how the controllability of the toy model \eqref{toy00} can be established. Given $q_0,q_1\in\mathcal{Q}$, there exists (at least in the case when $q_0$ and $q_1$ are sufficiently close, hence the arguments of Section \ref{RELO}) a geodesic associated with the Riemannian metric induced on $\mathcal{Q}$ by $ \mathcal{M}$, which connects $q_0$ with $q_1$. More precisely, there exists a unique smooth function $\bar{q}$ satisfying 
\begin{align}\label{geod00}
\begin{split}
 \mathcal{M}(\bar{q})\bar{q}''+\langle\Gamma (\bar{q}),\bar{q}',\bar{q}'\rangle=0  \text{  on } [0,T], 
  \text{ with  } \bar{q}(0)= q_0,\ \bar{q}(T)={q}_1.
\end{split}
\end{align}
So, one can arrive at the desired final position $q_1$, but a priori the final velocity $\bar{q}'(T)$ differs from $q'_1$, furthermore even the initial velocity $\bar{q}'(0)$ differs from $q'_0$.

Then, controlling the solution $\tilde{q}$ of \eqref{toy00} from $(q_0,q'_0)$ to $(q_1,q'_1)$ just amounts to setting $v_0 :=\mathcal{M}(q_0) (\bar{q}'(0) - q'_0 )$ and $v_1 := -  \mathcal{M}(q_1) (\bar{q}'(T) - q'_1 )$, which transforms the initial and final velocities $\tilde{q}'(0)$ and $\tilde{q}'(T)$ exactly to the desired velocities in order to achieve controllability.

\bigbreak

In Section \ref{S3} we prove a Proposition that is important for Theorem \ref{approx}, namely that the whole system will behave like the toy model above, in a certain regime (and in particular for small $\gamma$). This relies on some appropriate estimations of the terms $F_1$, $F_2$ and some Gronwall-type arguments.

\bigbreak

Section \ref{S4} explains how one can construct the control by means of complex analysis: it can be considered as the cornerstone of our control strategy. It is here that we construct the spacial parts $\bar{g}_0,\bar{g}_1$ of our control $g$ from \eqref{ctrl00}, in function of $v_0, v_1$.

%
%
%%%%%%%%%%%%%%%%%%%%%%%%%%%%%%%%%%%%%%%%%%%%%%%%%%%%%%%%%%%%%%%%%%%%%%%%%%%%%%%%%%%%%%%%%%%%%%%%%%%%%%%%
%
%
%
%
\section{Reformulation of the solid's equation into an ODE}
\label{NODE}

In this section we establish a reformulation of the Newton equations (\ref{newt}) as an ODE for the three degrees of freedom of the rigid body with coefficients obtained by solving some elliptic-type problems on a domain depending on the solid position.
 Indeed the fluid velocity can  be recovered from the  solid position and velocity by an elliptic-type problem, so that the fluid state may be seen as  solving an auxiliary steady problem, where time only appears as a parameter, instead of the evolution equation  (\ref{eu}). 
  The Newton equations can therefore be rephrased as a second-order differential equation on the solid position whose coefficients are determined by the auxiliary fluid problem. 
  
  Such a reformulation in the case without boundary control was already achieved in \cite{GMS} and we will start by recalling this case in Section \ref{NODEA}, cf. Proposition \ref{reformC} below. 
 A crucial fact in the analysis is that in the ODE reformulation 
the pre-factor of the body's accelerations is the sum of  the inertia of the solid and of  the so-called ``added inertia'' which is a symmetric  positive-semidefinite  matrix depending only on the body's shape and position, and which encodes the amount of  incompressible fluid that the  rigid body  has also to accelerate
around itself. 
Remarkably enough in the case without control and where the circulation is $0$ it turns out that the solid equations can be recast as a geodesic equation associated with the metric given by the total inertia. 

Then we will extend this analysis to the case where there is a control on a part of the external boundary in Section \ref{NODEB}, cf. Theorem \ref{reform}. In particular we will establish that the remote influence of the external boundary control translates into two additional force terms in the second-order ODE for the solid position; indeed we will distinguish one force term associated with the control velocity and another one associated with its time derivative.

To simplify notations, we denote the positions and velocities
${q}=(h,\vartheta)$, $q'=(h',\vartheta')$, and
$$\mathcal{S}(q)=h+R(\vartheta)(\mathcal{S}_0-h_0) \text{ and }
\mathcal{F} (q)=\Omega\setminus\mathcal{S}(q),$$ since the dependence in time of the domain occupied by the solid comes only from the position $q$. Furthermore, we denote $q(t)=(h(t),\vartheta (t))$.

\subsection{A reminder of the uncontrolled case}
\label{NODEA}

We first recall that in the case without any control the fluid velocity satisfies (\ref{bc1}), (\ref{bc2}), (\ref{kelv}) and (\ref{helm}). 
Therefore at each time $t$ 
the fluid velocity $u$ satisfies the following div/curl system:
\begin{equation}
\label{zozo}
  \left\{
      \begin{aligned}
&\div u = \curl u= 0 \ \text{ in } \mathcal{F}(q) ,\\
&u \cdot n =0 \ \text{on}\ \partial \Omega
 \ \text{ and  } \ u \cdot n =  \left( h'+\vartheta' (x-h)^\perp \right)\cdot n\ \text{on}\ \partial \mathcal{S}(q),\\
&\int_{\partial\mathcal{S}(q)} u \cdot \tau \, d\sigma =\gamma ,
\end{aligned} \right.
\end{equation}
where the  dependence in time is only due to the one of $q$ and $q'$.
Given the solid position $q$ and 
 the right hand sides, the system \eqref{zozo} uniquely determines  the fluid velocity $u$ in the space of $C^\infty$ vector fields on the closure of $ \mathcal{F}(q)$. 
Moreover thanks to the linearity of the  system with respect to its right hand sides, its unique solution $u$ can be uniquely decomposed with respect to the following functions which depend only on the solid position  $q=(h,\vartheta)$ in $\mathcal{Q}$  and encode the contributions of elementary right hand sides.
\begin{itemize}
\item 
The Kirchhoff potentials 
\begin{equation}
  \label{phi}
 \mathbf{\Phi} =(\Phi_1,\Phi_2,\Phi_3)(q,\cdot)
\end{equation}
are defined as the solution of the Neumann problems
\begin{align} \label{kir}
\begin{split}
&\Delta\Phi_i(q,x)=0\ \text{in}\ \mathcal{F}(q), \ \ \partial_{n}\Phi_i(q,x)=0\ \text{on}\ \partial\Omega,\text{ for } i\in\{1,2,3\},\\
&\partial_{n}\Phi_i(q,x)=
\left\{ \begin{array}{l}
n_i\ \text{on}\ \partial\mathcal{S}(q),\text{ for }i\in\{1,2\},\\
(x-h)^\perp \cdot n\ \text{on}\ \partial\mathcal{S}(q),\text{ for }i=3,
\end{array} \right.
\end{split}
\end{align} 
where all differential operators are with respect to the variable $x$.
\item  The stream function ${\psi}$ for the circulation term is defined in the following way. First we consider the solution $\tilde{\psi}(q,\cdot)$ of the Dirichlet problem
$\Delta\tilde{\psi}(q,x)=0$ in $ \mathcal{F}(q)$, 
$\tilde{\psi}(q,x)=0$ on $ \partial\Omega$, 
$\tilde{\psi}(q,x)=1$ on $ \partial\mathcal{S}(q).$
Then we set 
\begin{align}\label{str}
\psi(q,\cdot)=-\left(\int_{\partial\mathcal{S}(q)} \partial_n \tilde{\psi}(q,x) \, d\sigma \right)^{-1}\tilde{\psi}(q,\cdot),
\end{align}
such that we have
$$\int_{\partial\mathcal{S}(q)} \partial_n \psi(q,x) \, d\sigma = -1,$$
noting that the strong maximum principle gives us $\partial_n \tilde{\psi}(q,x)<0$ on $\partial\mathcal{S}(q)$.
\end{itemize}
%
%
% ---- CHOG
%
%
\begin{remark} \label{RemReg}
The  Kirchhoff potentials $\mathbf{\Phi}$ and the  stream function ${\psi}$  are 
 $C^\infty$  as functions of $q$ on $\mathcal{Q}$. 
We will use several times some properties of regularity with respect to the domain of solutions to linear elliptic problems, 
included for another potential $\mathcal A[q,g]$ associated with the control, see  Definition~\ref{defA} below. 
We will mention along the proof the properties which will be used and 
 we refer  to \cite{ChambrionMunnier,hp,LM} for 
 more on this material which is now standard in fluid-structure interaction. 
\end{remark}
The following statement is an immediate consequence of the definitions above.
\begin{lemma}\label{ldecomp0}
For any $q=(h,\vartheta)$ in $\mathcal{Q}$,
for any $p=(\ell,\omega)$ in $\mathbb R^2 \times \mathbb R$ and for any 
$\gamma$,   the unique solution $u$ in $C^\infty ( \overline{\mathcal{F}(q)})$ to the following system:
\begin{equation}
\label{zozoFormal}
  \left\{
      \begin{aligned}
&\div u = \curl u= 0 \ \text{ in } \in \mathcal{F}(q) ,\\
&u \cdot n =0 \ \text{on}\ \partial \Omega
  \text{ and  } u \cdot n =  \left( \ell+ \omega (x-h)^\perp \right)\cdot n\ \text{on}\ \partial \mathcal{S}(q),\\
&\int_{\partial\mathcal{S}(q)} u \cdot \tau \, d\sigma =\gamma. 
\end{aligned} \right.
\end{equation}
 is given by the following formula, for $x$ in $\overline{\mathcal{F}(q)}$, 
\begin{equation}
  \label{praud}
u(x)=\nabla (p \cdot \Phi (q,x))+\gamma \nabla^{\perp}\psi(q,x).
\end{equation}
\end{lemma}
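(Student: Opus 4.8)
The plan is to verify formula \eqref{praud} by exhibiting the right-hand side as a solution of \eqref{zozoFormal} and then invoking the uniqueness already asserted for this div/curl system. Since \eqref{zozoFormal} is linear in its data $(\ell,\omega,\gamma)$, it suffices to check that each building block solves the corresponding elementary problem and then superpose.

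First I would treat the potential part $\nabla(p\cdot\Phi(q,x)) = \ell_1\nabla\Phi_1 + \ell_2\nabla\Phi_2 + \omega\nabla\Phi_3$. Each $\Phi_i(q,\cdot)$ is harmonic in $\mathcal{F}(q)$ by \eqref{kir}, so $\nabla\Phi_i$ is divergence-free and curl-free there. On $\partial\Omega$ we have $\partial_n\Phi_i = 0$, i.e. $\nabla\Phi_i\cdot n = 0$, so this part contributes nothing to the outer boundary condition. On $\partial\mathcal{S}(q)$ the Neumann data give $\nabla\Phi_i\cdot n = n_i$ for $i=1,2$ and $\nabla\Phi_3\cdot n = (x-h)^\perp\cdot n$; hence $\nabla(p\cdot\Phi)\cdot n = (\ell_1 n_1 + \ell_2 n_2) + \omega (x-h)^\perp\cdot n = (\ell + \omega(x-h)^\perp)\cdot n$ on $\partial\mathcal{S}(q)$, which is exactly the prescribed normal trace. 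Finally, since $\Phi_i$ is single-valued on the fluid domain (which is simply connected minus the solid, but $\Phi_i$ is a genuine function), $\nabla\Phi_i = \nabla^\perp(\text{something})$ only locally; the clean point is rather that $\int_{\partial\mathcal{S}(q)} \nabla\Phi_i\cdot\tau\,d\sigma = \int_{\partial\mathcal{S}(q)} \partial_\tau\Phi_i\,d\sigma = 0$ because $\Phi_i$ is a well-defined function on a neighborhood of the closed curve $\partial\mathcal{S}(q)$; so the gradient part carries zero circulation around the body.

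Next I would handle the circulation part $\gamma\nabla^\perp\psi(q,x)$. Since $\psi$ is harmonic (being a scalar multiple of $\tilde\psi$, which solves a Dirichlet Laplace problem), the field $\nabla^\perp\psi$ is divergence-free and curl-free in $\mathcal{F}(q)$. Because $\psi$ is constant on each of $\partial\Omega$ and $\partial\mathcal{S}(q)$ (value $0$ and a constant, respectively), its tangential derivative vanishes there, so $\nabla^\perp\psi\cdot n = -\partial_\tau\psi = 0$ on both $\partial\Omega$ and $\partial\mathcal{S}(q)$; thus this part contributes nothing to the normal traces. For the circulation, $\int_{\partial\mathcal{S}(q)}\nabla^\perp\psi\cdot\tau\,d\sigma = \int_{\partial\mathcal{S}(q)}\partial_n\psi\,d\sigma = -1$ by the normalization \eqref{str} (with due care to the orientation convention relating $\tau$, $n$ and $\nabla^\perp$), so $\gamma\nabla^\perp\psi$ has circulation $\gamma$ around the body, as required. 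Adding the two contributions, the field in \eqref{praud} satisfies every line of \eqref{zozoFormal}, and by the uniqueness of the solution (stated just before Lemma~\ref{ldecomp0}) it must coincide with $u$.

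The only genuinely delicate point is bookkeeping the sign and orientation conventions: the relation between the outward normal $n$ on $\partial\mathcal{F}(q)$, the counterclockwise tangent $\tau$, and the operator $\nabla^\perp$, together with the fact that $\partial\mathcal{S}(q)$ is an \emph{inner} boundary of $\mathcal{F}(q)$, so that its outward normal (pointing into the solid) and its counterclockwise orientation as a curve are linked by a sign opposite to the one on a simply-connected domain's outer boundary. I would simply fix the identity $\nabla^\perp\psi\cdot\tau = \partial_n\psi$ (consistent with the paper's conventions) once and for all, and then the circulation computation matches \eqref{str} on the nose. Everything else is a direct substitution into the defining Neumann/Dirichlet problems, so no estimate or approximation is needed.
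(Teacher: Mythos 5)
Your approach is exactly the one the paper has in mind: the text simply declares Lemma~\ref{ldecomp0} to be ``an immediate consequence of the definitions above,'' and the underlying argument is precisely the verification-plus-uniqueness scheme you carry out. The treatment of the potential part is fine in every respect (harmonicity $\Rightarrow$ div-free and curl-free, Neumann data give the normal traces, single-valuedness of $\Phi_i$ kills the circulation).

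There is, however, a small but real sign slip in the circulation bookkeeping. With the paper's conventions ($n$ is the outward normal of $\mathcal F(q)$, so on $\partial\mathcal S(q)$ it points into the solid, and $\tau$ is counterclockwise around $\mathcal S(q)$), one has $\tau = -n^{\perp}$ on $\partial\mathcal S(q)$, hence
$\nabla^{\perp}\psi\cdot\tau = -\,\partial_n\psi$ there, \emph{not} $+\partial_n\psi$ as you propose to ``fix once and for all.'' Consequently
\begin{equation*}
\int_{\partial\mathcal S(q)} \nabla^{\perp}\psi\cdot\tau\,d\sigma
= -\int_{\partial\mathcal S(q)} \partial_n\psi\,d\sigma = -(-1) = 1 ,
\end{equation*}
so the circulation of $\gamma\nabla^{\perp}\psi$ is indeed $\gamma$. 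Your displayed line claims the integral equals $-1$ yet then concludes circulation $\gamma$, which is internally inconsistent; the sign relation you settle on at the end is the wrong one. This does not affect the soundness of the method, only the bookkeeping you flag as delicate --- but since you single it out as the one delicate point, it is worth getting the sign right: the fact that $\partial\mathcal S(q)$ is an \emph{inner} boundary of $\mathcal F(q)$ is exactly what flips the sign relative to the naive $\nabla^{\perp}\psi\cdot\tau=\partial_n\psi$ that one would write on the outer boundary of a simply connected domain.
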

Above $p \cdot \Phi  (q,x)$ denotes the inner product $p \cdot \Phi (q,x) = \sum_{i=1}^3 \, p_i  \Phi_i  (q,x)$.
\bigbreak
Let us now address the solid dynamics. 
The solid motion is driven by the Newton equations (\ref{newt}) where the influence of the fluid on the solid appears through the fluid pressure. 
The pressure can in turn  be related to the fluid velocity thanks to the Euler equations (\ref{eu}). 
The contributions  to the solid dynamics of the 
 two terms  in  the right hand side of the  fluid velocity decomposition formula  \eqref{praud} are very different. 
 On the one hand the potential part, i.e. the first term  in  the right hand side of    \eqref{praud}, contributes as an added inertia matrix, together with a connection term which ensures a geodesic structure (see \cite{Munnier}), whereas on the other hand the contribution of the  term due to the circulation,  i.e.  the second term  in  the right hand side of    \eqref{praud}, turns out 
  to be a force which reminds us of the Lorentz force in electromagnetism by its structure (see \cite{GMS}). 
We therefore introduce the following notations.
\begin{itemize}
\item 
 We respectively define the genuine and added mass $3 \times 3$ matrices by
$$\mathcal{M}_{g}= \left( \begin{array}{ccc}
m & 0 & 0\\
0 & m & 0\\
0 & 0 & \mathcal{J}
\end{array} \right) ,$$
and, for $q\in\mathcal{Q}$, 
$$ \mathcal{M}_{a} (q) =\left(  \int_{  \mathcal{F} (q)}   \nabla\Phi_i (q,x) \cdot \nabla\Phi_j (q,x) \, dx \right)_{1 \leqslant i,j  \leqslant 3} .
$$
Note that $\mathcal{M}_a$ is a symmetric Gram matrix and  is  $C^\infty$ on $\mathcal{Q}$. 
\item 
We define the symmetric bilinear map $\Gamma (q) $ given by
$$\langle \Gamma(q), p,p \rangle=\left( \sum_{1\leq i,j \leq 3} \Gamma^k_{i,j} (q) \,  p_i \,  p_j \,  \right)_{1\leq k \leq 3} \in \mathbb{R}^3,\ \forall p \in \mathbb{R}^3,$$
where,  for each $i,j,k\in\{1,2,3\}$,
$\Gamma^k_{i,j} $ denotes  the Christoffel symbols of the first kind defined on $\mathcal{Q}$ by 
\begin{align}\label{chrs}
\Gamma^k_{i,j}  =\frac{1}{2}\left( \frac{\partial (\mathcal{M}_a)_{k,j} }{\partial q_i}  + \frac{\partial (\mathcal{M}_a)_{k,i} }{\partial q_j} -\frac{\partial (\mathcal{M}_a)_{i,j} }{\partial q_k} \right)  .
\end{align}
It can be checked that $\Gamma$ is of class $C^\infty$ on $\mathcal{Q}$.
\item 
We introduce the following $C^\infty$  vector fields on $\mathcal{Q}$ with values in $\mathbb R^3$ by 
\begin{align}
 {E} &=-\frac{1}{2}\int_{\partial\mathcal{S}(q)}|\partial_n \psi(q,\cdot)|^2  \partial_n \Phi(q,\cdot)  \, d\sigma ,
\\ {B} &=\int_{\partial\mathcal{S}(q)} \partial_n \psi(q,\cdot) \left( \partial_n \Phi(q,\cdot) \times \partial_\tau \Phi(q,\cdot) \right)\, d\sigma .
\end{align}
\end{itemize}
We recall that the notation $\Phi$ was given in \eqref{phi}.  

The reformulation of the model as an ODE is given in the following result, which  was first established  in \cite{Munnier} in the case $\gamma=0$ and in \cite{GMS} in the case $\gamma \in \R$. 
\begin{theorem}\label{reform}
Given 
$q=(h,\vartheta) \in C^{\infty}([0,T];\mathcal{Q})$, $u\in C^{\infty}(\mathcal{D}_T;\mathbb{R}^{2})$
 we have that $(q,u)$  is a solution to 
 (\ref{eu}), (\ref{bc1}),  (\ref{bc2}), (\ref{newt}),  (\ref{kelv}) and (\ref{helm})   if and only if $q$ satisfies the following ODE on $[0,T]$
 \begin{align} \label{tout}
\Big(\mathcal{M}_g+\mathcal{M}_a(q)\Big)q''+\langle \Gamma(q),q',q'\rangle =  \gamma^2 E(q) + \gamma q'\times B(q) ,
\end{align}
and $u$ is the unique solution to 
the system (\ref{zozo}). 
Moreover the total kinetic energy $\frac12 \Big(\mathcal{M}_g+\mathcal{M}_a(q)\Big) q' \cdot q'$ is conserved in time for smooth solutions of (\ref{tout}), at least as long as there is no collision.
\end{theorem}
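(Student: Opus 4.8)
\medskip
\noindent\emph{Proof proposal.}

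\emph{Step 1 (slaving the fluid, recovering the pressure).} The plan is to treat $q=(h,\vartheta)$ as the only genuine unknown. If $(q,u)$ solves \eqref{eu},\eqref{bc1},\eqref{bc2},\eqref{newt},\eqref{kelv},\eqref{helm}, then at each time $t$ the field $u(t,\cdot)$ solves the div--curl system \eqref{zozo} with data $\bigl(q(t),q'(t),\gamma\bigr)$, so Lemma~\ref{ldecomp0} forces $u=\nabla\varphi+\gamma\,\nabla^{\perp}\psi(q,\cdot)$ with $\varphi(t,x):=q'(t)\cdot\mathbf{\Phi}(q(t),x)$. Since $\curl u\equiv 0$ we have $(u\cdot\nabla)u=\nabla(|u|^{2}/2)$, so $-\nabla\pi=\partial_{t}u+\nabla(|u|^{2}/2)$ is curl--free; moreover, by Kelvin's theorem \eqref{kelv} the circulation of $u$ around $\mathcal S(t)$ is the constant $\gamma$, hence $\partial_{t}u$ and therefore $\nabla\pi$ have zero circulation around $\mathcal S(t)$, so $\pi$ is a genuine single-valued function, unique up to a function of $t$. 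Integrating as in \cite{Munnier,GMS} yields a Bernoulli law $\pi=-\partial_{t}\varphi-\tfrac12|u|^{2}+c(t)$ up to an extra term that is linear in $\gamma$ and comes from the time dependence of $\psi(q(t),\cdot)$.

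\emph{Step 2 (the ODE and the force terms — the main point).} By \eqref{kir}, on $\partial\mathcal S(q)$ one has $n_{i}=\partial_{n}\Phi_{i}$ for $i=1,2$ and $(x-h)^{\perp}\cdot n=\partial_{n}\Phi_{3}$, so \eqref{newt} reads $(\mathcal M_{g}q'')_{i}=\int_{\partial\mathcal S(q)}\pi\,\partial_{n}\Phi_{i}\,d\sigma$ for $i=1,2,3$; the additive constant $c(t)$ drops because $\int_{\partial\mathcal S(q)}\partial_{n}\Phi_{i}\,d\sigma=\int_{\partial\mathcal F(q)}\partial_{n}\Phi_{i}\,d\sigma=0$. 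Inserting the Bernoulli law and using Green's identity with \eqref{kir}, the acceleration part of $-\partial_{t}\varphi$ gives $-\sum_{j}q''_{j}\int_{\partial\mathcal S}\Phi_{j}\partial_{n}\Phi_{i}\,d\sigma=-(\mathcal M_{a}(q)q'')_{i}$, and the $\gamma^{2}$ part of $-\tfrac12|u|^{2}$ gives, since $\psi$ is constant on $\partial\mathcal S$, exactly $-\tfrac{\gamma^{2}}{2}\int_{\partial\mathcal S}|\partial_{n}\psi|^{2}\partial_{n}\Phi_{i}\,d\sigma=\gamma^{2}E(q)_{i}$. After moving $-(\mathcal M_{a}q'')_{i}$ to the left, it remains to show that the terms quadratic in $q'$ (from $-\partial_{t}\varphi$ and from $-\tfrac12|\nabla\varphi|^{2}$) sum to $-\langle\Gamma(q),q',q'\rangle_{i}$ with $\Gamma$ as in \eqref{chrs}, and that the terms linear in $\gamma$ (from the extra Bernoulli term and from the cross term $-\gamma\,\nabla\varphi\cdot\nabla^{\perp}\psi$) sum to $\gamma\,(q'\times B(q))_{i}$. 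The first is obtained by differentiating $(\mathcal M_{a})_{ij}=\int_{\mathcal F(q)}\nabla\Phi_{i}\cdot\nabla\Phi_{j}\,dx$ in $q$ with the Hadamard moving-domain formula, integrating by parts using $\Delta\Phi_{i}=0$, and symmetrising the resulting boundary integrals in the indices contracted with $q'_{j}q'_{k}$; the second is the analogous, more delicate computation using the shape derivative of $\psi(q,\cdot)$ and the planar identities for $\nabla^{\perp}\psi$, and it is here that the gyroscopic term $q'\times B$ emerges. This algebraic identification — done in \cite{Munnier} for $\gamma=0$ and in \cite{GMS} for general $\gamma$ — is the only step that is not routine; I would reproduce it, adapting the sign bookkeeping to the present orientation conventions. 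This establishes the direct implication and pins down the ODE \eqref{tout}.

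\emph{Step 3 (converse and energy).} Conversely, given $q\in C^{\infty}([0,T];\mathcal Q)$ solving \eqref{tout} with the prescribed Cauchy data, I would set $u(t,\cdot)$ to be the solution of \eqref{zozo} (smooth in $(t,x)$ by Remark~\ref{RemReg}) and $\pi$ to be the Bernoulli expression of Step~1 with $c\equiv 0$. Then \eqref{bc1},\eqref{bc2},\eqref{kelv},\eqref{helm} hold by construction of $u$; the pair $(u,\pi)$ solves \eqref{eu} because $\div u=0$ and, by the very definition of $\pi$ together with $(u\cdot\nabla)u=\nabla(|u|^{2}/2)$, $\partial_{t}u+(u\cdot\nabla)u+\nabla\pi=0$; and \eqref{newt} holds because, by Step~2 read in reverse, it is equivalent to \eqref{tout}. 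Finally, taking the scalar product of \eqref{tout} with $q'$: the term $\gamma\,q'\times B(q)$ is orthogonal to $q'$, and from \eqref{chrs} one has $\langle\Gamma(q),q',q'\rangle\cdot q'=\tfrac12\frac{d}{dt}\bigl(\mathcal M_{a}(q)q'\cdot q'\bigr)-\mathcal M_{a}(q)q''\cdot q'$, so the left-hand side equals $\tfrac12\frac{d}{dt}\bigl[(\mathcal M_{g}+\mathcal M_{a}(q))q'\cdot q'\bigr]$; hence this is conserved in the geodesic case $\gamma=0$, and in general its time derivative is $\gamma^{2}E(q)\cdot q'$, which is again a total derivative, since $\int_{\mathcal F(q)}|u|^{2}=\mathcal M_{a}(q)q'\cdot q'+\gamma^{2}\int_{\mathcal F(q)}|\nabla\psi(q,\cdot)|^{2}$ (the mixed integral $\int_{\mathcal F(q)}\nabla\varphi\cdot\nabla^{\perp}\psi=-\int_{\partial\mathcal F(q)}\varphi\,\partial_{\tau}\psi\,d\sigma$ vanishing because $\psi$ is constant on each boundary component), so that the total fluid--solid kinetic energy $\tfrac12\int_{\mathcal F(t)}|u|^{2}\,dx+\tfrac12 m|h'|^{2}+\tfrac12\mathcal J|\vartheta'|^{2}$ is conserved along the motion.
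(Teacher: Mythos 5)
Your proposal follows essentially the same route as the paper: recover $u$ at each time from the div--curl system \eqref{zozo} via Lemma~\ref{ldecomp0}, substitute into the irrotational pressure relation, identify the $\mathcal{M}_a$, $\Gamma$, $E$, $B$ contributions by Green's identity and shape differentiation, and defer the detailed algebraic identification of the quadratic-in-$q'$ and $\gamma$-linear terms to \cite{Munnier} and \cite{GMS} (Lemmas 32--34 there). The only stylistic divergence is that you keep the boundary-integral form of Newton's law and work with the scalar Bernoulli pressure, whereas the paper passes to the volume integral $\mathcal{M}_g q'' = \int_{\mathcal{F}(q)}\nabla\pi\cdot\nabla\Phi\,dx$ and works with $\nabla\pi$ directly; the two are equivalent by another application of Green's identity, and the volume form has the small advantage of sidestepping any discussion of the single-valuedness of $\pi$, which in your Step~1 you try to derive from Kelvin's theorem (a slightly dubious argument since $\partial\mathcal S(t)$ moves, and in any case unnecessary, since $\pi$ is given as a function by the hypothesis \eqref{eu}).

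One point worth retaining from your write-up: your energy discussion is in fact more careful than the theorem's own statement. Multiplying \eqref{tout} by $q'$ and using \eqref{identityNRJ} gives
\[
\frac{d}{dt}\Bigl[\tfrac12\bigl(\mathcal{M}_g+\mathcal{M}_a(q)\bigr)q'\cdot q'\Bigr] = \gamma^2\,E(q)\cdot q' ,
\]
so the quantity $\tfrac12(\mathcal{M}_g+\mathcal{M}_a(q))q'\cdot q'$ is conserved only in the geodesic case $\gamma=0$; for $\gamma\neq 0$ one must add the potential term $\tfrac{\gamma^2}{2}\int_{\mathcal{F}(q)}|\nabla\psi(q,\cdot)|^2\,dx$ to obtain the conserved Routh-type energy, exactly as you point out through the identity $\int_{\mathcal{F}(q)}|u|^2 = \mathcal{M}_a(q)q'\cdot q' + \gamma^2\int_{\mathcal{F}(q)}|\nabla\psi|^2$. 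To make this last step complete you would still need to verify the shape-derivative identity $E(q)\cdot q' = -\tfrac12\frac{d}{dt}\int_{\mathcal{F}(q)}|\nabla\psi(q(t),\cdot)|^2\,dx$ rather than assert it, but this is precisely one of the computations carried out in \cite{GMS}.
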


Note that  in the case where  $\gamma=0$, the ODE (\ref{tout}) means that the particle $q$ is moving along the geodesics associated with the Riemannian metric induced on $\mathcal{Q}$ by the matrix field $ \mathcal{M}_g+\mathcal{M}_a(q)$. 
Note that, since $\mathcal{Q}$ is a manifold with boundary and the metric $ \mathcal{M}_g+\mathcal{M}_a(q)$ 
 may become singular at the boundary of $\mathcal{Q}$, the Hopf-Rinow theorem does not apply and geodesics may not be global.
 However we will make use only of local geodesics. 

\begin{remark}
Let us also mention that the whole ``inviscid fluid + rigid body''  system can be reinterpreted as a geodesic flow on an infinite dimensional manifold, cf. \cite{GS-Geod}.
However the reformulation established by Theorem  \ref{reform}  relies on the finite dimensional manifold $\mathcal{Q}$ and sheds more light on the dynamics of the rigid body. 
 \end{remark}
 
Below we provide a sketch of the proof of Theorem \ref{reform}; this will be useful in  Section \ref{NODEB} when extending the analysis to the controlled case. 

\begin{proof}
Let us focus on the direct part of the proof for sake of clarity but all the subsequent arguments can be arranged in order to insure the converse part of the statement as well. 
Using Green's first identity and the properties of the Kirchhoff functions, the Newton equations (\ref{newt}) can be rewritten as
\begin{align}
\label{renew}
\mathcal{M}_g \, q'' =\int_{  \mathcal{F} (q)} \nabla \pi \cdot \nabla \Phi (q,x) \, dx.
\end{align}
Moreover when  $u$ is irrotational, Equation (\ref{eu}) can be rephrased as
\begin{equation}
  \label{Bernou}
\nabla \pi = -\partial_{t}u-\frac12 \nabla_{x} |u|^{2} , \quad  \text{ for }  x  \text{ in }  \mathcal{F} (q(t)),
\end{equation}
and  Lemma \ref{ldecomp0} shows that for any $t$ in $[0,T]$,  
\begin{equation}
  \label{baba}
u(t,\cdot)=\nabla (q'(t) \cdot \Phi (q(t),\cdot))+\gamma \nabla^{\perp}\psi(q(t),\cdot) .
\end{equation}
Substituting  \eqref{baba} into   \eqref{Bernou} and then the resulting decomposition of $\nabla \pi$ into \eqref{renew} we get
\begin{align}
\label{devdev}
\begin{split}
\mathcal{M}_g\, q''=
- \int_{  \mathcal{F} (q)} \left( \partial_{t} \nabla (q'\cdot \Phi(q,x)) + \frac{\nabla|\nabla (q'\cdot \Phi(q,x))|^{2}}{2}  \right) \cdot \nabla \Phi (q,x) \, dx \\
- \gamma \int_{\mathcal{F} (q)}\left(  \partial_{t} \nabla^\perp \psi(q,x) + \nabla\left( \nabla(q'\cdot\Phi (q,x))\cdot \nabla^\perp\psi(q,x)\right) \right)\, \cdot \nabla \Phi (q,x) \, dx\\ -\gamma^2\int_{ \mathcal{F} (q)} \frac{\nabla|\nabla\psi(q,x)|^{2}}{2} \, \cdot \nabla \Phi (q,x) \, dx .
 \end{split}
\end{align}
According to Lemmas 32, 33 and 34 in \cite{GMS}, 
the terms in the three lines of 
the right-hand side above are respectively equal to $-\mathcal{M}_a (q) q''-\langle \Gamma(q),q',q'\rangle$, 
$ \gamma q'\times B(q)$ and $\gamma^2 E(q)$, so that we easily deduce 
 the ODE (\ref{tout}) from \eqref{devdev}. 
 
 The conservation of  the kinetic energy $\frac12 \Big(\mathcal{M}_g+\mathcal{M}_a(q)\Big) q' \cdot q'$  is then simply obtained by multiplying 
the ODE (\ref{tout}) by $q'$ and observing that 
\begin{equation}
  \label{identityNRJ}
\Big(\Big(\mathcal{M}_g+\mathcal{M}_a(q)\Big)q''  +\langle \Gamma(q),q',q'\rangle  \Big)\cdot q'  = 
\Big(\frac12 \Big(\mathcal{M}_g+\mathcal{M}_a(q)\Big) q' \cdot q'  \Big)' .
\end{equation}
\end{proof}

\subsection{Extension to the controlled case}
\label{NODEB}

We now tackle the case where a control is imposed on the part $\Sigma$ of the external boundary $\partial \Omega$. 
At any time this control has to be compatible with the incompressibility of the fluid meaning that the flux through $\Sigma$ has to be zero. 
We therefore introduce the set 
$$ \mathcal C := \left\{  g \in    C_{0}^{\infty}( \Sigma  ;\mathbb{R}) \  \text{ such that }  \,  \int_\Sigma g \, d \sigma=0 \right\} .$$

The  decomposition of the fluid velocity  $u$ then involves a new potential term involving the following function. 
\begin{definition}\label{defA}
With  any $q\in\mathcal{Q}$ and $g \in \mathcal C$ we associate the unique solution
$\overline{\alpha} := \mathcal A[q,g] \in C^\infty (\overline{\mathcal{F}(q)};\mathbb{R})$  to the following Neumann problem:
\begin{equation}\label{pot}
\Delta \overline{\alpha} =0\ \text{in}\  \mathcal{F}(q) \quad    \text{ and } \quad 
\partial_{n} \, \overline{\alpha}=g\mathbbm{1}_{\Sigma}\ \text{on}\ \partial\mathcal{F}(q) ,
\end{equation}
with zero mean on $\mathcal{F}(q) $.
\end{definition}
Let us mention that the zero mean condition above allows to determine a unique solution to the Neumann problem but plays no role in the sequel. 

Now Lemma \ref{ldecomp0} can be modified as follows. 
\begin{lemma} \label{ldecomp0C}
For any $q=(h,\vartheta)$ in $\mathcal{Q}$,  for any $p=(\ell,\omega)$ in $\mathbb R^2 \times \mathbb R$, for any  $\overline{g}$ in $ \mathcal C$, 
 the unique solution $u$ in $C^\infty ( \overline{\mathcal{F}(q)})$ to 
\begin{align*}
&\div u = \curl u= 0 \ \text{ in }  \mathcal{F}(q) ,\\
&u \cdot n = \mathbbm{1}_\Sigma \, \overline{g}\ \text{on}\ \partial \Omega
  \text{ and  } u \cdot n =  \left(\ell +\omega (x-h)^\perp \right)\cdot n\ \text{on}\ \partial \mathcal{S}(q),\\
&\int_{\partial\mathcal{S}(q)} u \cdot \tau \, d\sigma =\gamma , 
\end{align*}
 is given by 
 \begin{equation}
  \label{decomp2}
u =\nabla (p \cdot \Phi(q,\cdot))+\gamma \nabla^{\perp}\psi(q,\cdot) + \nabla \mathcal A[q,\overline{g}] .
\end{equation}
\end{lemma}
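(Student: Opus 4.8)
The plan is to mimic the proof of Lemma~\ref{ldecomp0}: since the div/curl system is linear in its data, one decomposes the solution into elementary pieces, each handling one piece of the data, and then invokes the uniqueness of solutions to such a system on the multiply connected domain $\mathcal{F}(q)$ (which is an annulus-type domain, so the solution is unique once the normal trace on $\partial\mathcal{F}(q)$ and the circulation around the inner boundary $\partial\mathcal{S}(q)$ are prescribed).

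First I would verify that the right-hand side of \eqref{decomp2} is a legitimate candidate, i.e. that $u := \nabla(p\cdot\Phi(q,\cdot)) + \gamma\nabla^\perp\psi(q,\cdot) + \nabla\mathcal{A}[q,\overline g]$ solves the stated system. It is divergence-free because $\Phi_i$ and $\mathcal{A}[q,\overline g]$ are harmonic and $\nabla^\perp\psi$ is automatically divergence-free; it is curl-free because $\nabla(p\cdot\Phi)$ and $\nabla\mathcal{A}$ are gradients and $\curl\nabla^\perp\psi = \Delta\psi = 0$ since $\psi$ (being a multiple of $\tilde\psi$) is harmonic in $\mathcal{F}(q)$. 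For the boundary conditions, on $\partial\Omega$ one has $\partial_n\Phi_i = 0$ (by \eqref{kir}), $\nabla^\perp\psi\cdot n = \partial_\tau\psi = 0$ since $\psi$ is constant ($=0$) on $\partial\Omega$, and $\partial_n\mathcal{A}[q,\overline g] = \overline g\mathbbm{1}_\Sigma$ by Definition~\ref{defA}; hence $u\cdot n = \mathbbm{1}_\Sigma\,\overline g$ on $\partial\Omega$ as required. On $\partial\mathcal{S}(q)$ one has $\partial_n(p\cdot\Phi) = \ell\cdot n + \omega(x-h)^\perp\cdot n = (\ell+\omega(x-h)^\perp)\cdot n$ by \eqref{kir}, while $\nabla^\perp\psi\cdot n = \partial_\tau\psi = 0$ since $\psi$ is constant ($=-(\int_{\partial\mathcal{S}(q)}\partial_n\tilde\psi)^{-1}$) on $\partial\mathcal{S}(q)$, and $\partial_n\mathcal{A}[q,\overline g] = 0$ on $\partial\mathcal{S}(q)\subset\partial\mathcal{F}(q)\setminus\Sigma$; hence the solid boundary condition holds. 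For the circulation, $\nabla(p\cdot\Phi)$ and $\nabla\mathcal{A}$ are globally defined gradients on $\overline{\mathcal{F}(q)}$ so their circulations around $\partial\mathcal{S}(q)$ vanish, while $\int_{\partial\mathcal{S}(q)}\nabla^\perp\psi\cdot\tau\,d\sigma = -\int_{\partial\mathcal{S}(q)}\partial_n\psi\,d\sigma = 1$ by the normalization of $\psi$ in \eqref{str}; so the circulation of $\gamma\nabla^\perp\psi$ is $\gamma$, as desired.

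Then I would conclude by uniqueness: if $u_1,u_2$ both solve the system, then $w := u_1-u_2$ is divergence- and curl-free on $\mathcal{F}(q)$ with $w\cdot n = 0$ on all of $\partial\mathcal{F}(q)$ and $\int_{\partial\mathcal{S}(q)}w\cdot\tau\,d\sigma = 0$; on the doubly connected domain $\mathcal{F}(q)$ this forces $w\equiv 0$ (this is precisely the well-posedness already invoked for \eqref{zozo} in the uncontrolled case, and it is the standard Hodge/de Rham argument for planar annular domains). The $C^\infty(\overline{\mathcal{F}(q)})$ regularity is inherited from the smoothness of $\Phi$, $\psi$ and $\mathcal{A}[q,\overline g]$, which follows from elliptic regularity on the smooth domain $\mathcal{F}(q)$ as recalled in Remark~\ref{RemReg}.

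There is no genuine obstacle here; the only point requiring a little care — and the one I would state explicitly rather than gloss over — is the bookkeeping of circulations and tangential/normal traces for the stream-function term $\gamma\nabla^\perp\psi$, namely that $\psi$ being locally constant on each boundary component makes $\nabla^\perp\psi\cdot n$ vanish on $\partial\mathcal{F}(q)$ while contributing the full circulation $\gamma$ around $\partial\mathcal{S}(q)$. Everything else is a direct transcription of Lemma~\ref{ldecomp0} with the extra harmonic term $\nabla\mathcal{A}[q,\overline g]$ inserted to absorb the non-homogeneous Neumann datum $\mathbbm{1}_\Sigma\overline g$ on $\partial\Omega$.
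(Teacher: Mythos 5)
Your proposal is correct and takes essentially the same approach the paper implicitly intends: the paper states Lemma~\ref{ldecomp0C} as a direct modification of Lemma~\ref{ldecomp0} (itself labelled an "immediate consequence of the definitions") and gives no written proof, so spelling out the linearity/superposition check for each term of \eqref{decomp2} together with uniqueness of the div/curl problem on the doubly connected domain $\mathcal{F}(q)$ is exactly what is expected. Your bookkeeping of the normal and tangential traces and of the circulation of $\gamma\nabla^\perp\psi$ is accurate, including the role of the normalization \eqref{str}.
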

Let us avoid a possible confusion by mentioning that the $\nabla$ operator above has to be considered with respect to the space variable $x$. 
The function $ \mathcal A[q,\overline{g}]$  and its time derivative will respectively be involved into the arguments of the following force terms.
\begin{definition}
\label{def-forces}
We define, for any $q$ in $\mathcal{Q}$, $p$ in $\mathbb{R}^3$, $\alpha $ in $C^{\infty}(\overline{\mathcal{F}(q)}; \mathbb R)$ and $\gamma$ in $\mathbb R$, 
$F_1  (q,p, \gamma) [ \alpha]  $ and $F_2 (q) [\alpha]   $ in $\mathbb R^3$ by 
\begin{align} \label{ef2}
 F_1 (q,p, \gamma) [ \alpha]  &:= -  \frac12 \int_{ \partial \mathcal{S} (q)}  |\nabla\alpha|^{2}   \, \partial_n \Phi (q,\cdot) \, d \sigma
\\  \nonumber & \quad \quad-\int_{ \partial \mathcal{S} (q)}  \nabla\alpha \cdot \Big(\nabla (p\cdot \Phi(q,\cdot))+\gamma \nabla^{\perp}\psi(q,\cdot)\Big)\, \partial_n \Phi(q,\cdot) \, d \sigma,
\\   F_2  (q) [\alpha]  \label{ef3}
&:= - \int_{ \partial \mathcal{S} (q)}   \alpha  \, \partial_n \Phi (q,\cdot) \, d \sigma .
\end{align}
\end{definition}

 Observe that Formulas \eqref{ef2} and \eqref{ef3} only require  $\alpha $ and $\nabla \alpha $ to be defined on $\partial \mathcal{S} (q)$. 
 Moreover when these formulas  are applied to 
$\alpha = \mathcal A[q,g] $  for some $g $ in $\mathcal C$, 
then only the trace of $\alpha$ and the tangential derivative $\partial_\tau \alpha$ on $\partial \mathcal{S} (q)$
are involved, since the normal derivative of $\alpha$ vanishes on $\partial \mathcal{S} (q)$ by definition, cf. \eqref{pot}.

We define our notion of controlled solution of the ``fluid+solid'' system as follows. 
\begin{definition}\label{CS}
We say that $(q,g)$ in $C^{\infty} ([0,T];\mathcal{Q}) \times C^{\infty}_0 ([0,T]; \mathcal C)$ is a controlled solution if the following ODE holds true on $[0,T]$:
  \begin{align}\label{renform}
  \begin{split}
\big( \mathcal{M}_g+\mathcal{M}_a(q) \big) q''+\langle \Gamma(q),q',q'\rangle &=  \gamma^2 E(q) + \gamma q'\times B(q) 
\\   &\quad + F_1 (q,q', \gamma) [ \alpha] +   F_2  (q) [\partial_t\alpha] ,
\end{split}
\end{align}
 where $\alpha(t,\cdot) := \mathcal A[q(t),g(t,\cdot)] $.
\end{definition}

We have the following result for reformulating the model as an ODE.
\begin{proposition} \label{reformC}
Given 
$$q\in C^{\infty}([0,T];\mathcal{Q})  ,\quad u\in C^{\infty}(\mathcal{D}_T;\mathbb{R}^{2})   \quad    \text{ and  } \quad  g\in  C^{\infty}_0 ([0,T]; \mathcal C)  ,$$
 we have that $(q,u)$
 is a solution to (\ref{eu}), (\ref{bc1}),   (\ref{newt}), (\ref{kelv}), (\ref{ic}), (\ref{helm}), (\ref{yud1}), (\ref{yud3}), (\ref{yud2}) if and only if $(q,g)$ 
 is a controlled solution
 and $u$ is the unique solution to the unique div/curl type problem:  
\begin{align*}
&\div u = \curl u= 0 \ \text{ in }  \mathcal{F}(q) ,\\
&u \cdot n =  \mathbbm{1}_\Sigma  \, g\ \text{on}\ \partial \Omega
  \text{ and  } u \cdot n =  \left( h'+\vartheta' (x-h)^\perp \right)\cdot n\ \text{on}\ \partial \mathcal{S} (q) ,\\
&\int_{\partial\mathcal{S}(q)} u \cdot \tau \, d\sigma =\gamma , 
\end{align*}
with $q=(h,\vartheta)$. 
\end{proposition}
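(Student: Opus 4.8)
The plan is to mimic the proof of Theorem~\ref{reform} recalled above, now carrying along the extra potential $\mathcal A[q,g]$ coming from the control. First I would observe that, exactly as in the uncontrolled case, the equivalence of the PDE system with the pair (div/curl problem for $u$) $+$ (ODE for $q$) is a consequence of two facts: (i) the div/curl system in the statement has a unique $C^\infty$ solution for each fixed $t$, which by linearity and Lemma~\ref{ldecomp0C} is precisely $u=\nabla(q'\cdot\Phi(q,\cdot))+\gamma\nabla^\perp\psi(q,\cdot)+\nabla\mathcal A[q,g]$; and (ii) the boundary conditions \eqref{bc1}, \eqref{yud1}, \eqref{yud3} together with \eqref{kelv} are exactly the right-hand sides of that div/curl system, while \eqref{helm} is guaranteed by $\curl u=0$ and the compatibility condition \eqref{yud2} on $\Sigma^-$ (here $g\in\mathcal C$ ensures the flux condition needed for solvability). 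So the only real content is to show that, for such a $u$, Newton's equations \eqref{newt} are equivalent to the ODE \eqref{renform}.

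For that, I would repeat the computation \eqref{renew}--\eqref{devdev} verbatim but with the augmented velocity decomposition $u(t,\cdot)=\nabla(q'\cdot\Phi(q,\cdot))+\gamma\nabla^\perp\psi(q,\cdot)+\nabla\alpha$, where $\alpha(t,\cdot)=\mathcal A[q(t),g(t,\cdot)]$. Using Green's identity and the properties of the Kirchhoff potentials, Newton's law again reads $\mathcal{M}_g q''=\int_{\mathcal F(q)}\nabla\pi\cdot\nabla\Phi(q,x)\,dx$, and Bernoulli \eqref{Bernou} gives $\nabla\pi=-\partial_t u-\tfrac12\nabla|u|^2$. Plugging in the three-term decomposition of $u$ and expanding $\partial_t u$ and $|u|^2$, one gets all the terms already present in \eqref{devdev} — which by Lemmas 32--34 of \cite{GMS} produce $-\mathcal{M}_a(q)q''-\langle\Gamma(q),q',q'\rangle+\gamma q'\times B(q)+\gamma^2 E(q)$ — plus the new cross terms and quadratic term involving $\nabla\alpha$. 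The terms quadratic in $\nabla\alpha$ and the cross terms $\nabla\alpha\cdot\bigl(\nabla(q'\cdot\Phi)+\gamma\nabla^\perp\psi\bigr)$ come from $\tfrac12\nabla|u|^2$; an integration by parts against $\nabla\Phi(q,\cdot)$ (using $\Delta\Phi_i=0$ and the boundary behaviour of $\partial_n\Phi_i$, which vanishes on $\partial\Omega$) converts the volume integrals into boundary integrals over $\partial\mathcal S(q)$, yielding exactly $F_1(q,q',\gamma)[\alpha]$ as in \eqref{ef2}. The term $-\partial_t u$ contributes $-\int_{\mathcal F(q)}\partial_t\nabla\alpha\cdot\nabla\Phi(q,x)\,dx$; here I would be careful that $\partial_t$ acts both on $g$ and, through the moving domain $\mathcal F(q(t))$, on $q$, but after integration by parts this becomes $-\int_{\partial\mathcal S(q)}\partial_t\alpha\,\partial_n\Phi(q,\cdot)\,d\sigma=F_2(q)[\partial_t\alpha]$, matching \eqref{ef3}; any boundary-velocity (transport) corrections from differentiating over the moving domain should either cancel against similar terms or be absorbed — this is the point requiring genuine care. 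One then collects everything to obtain \eqref{renform}.

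Two regularity remarks I would include. The function $\mathcal A[\cdot,\cdot]$ depends smoothly on $q\in\mathcal Q$ and linearly (hence smoothly) on $g\in\mathcal C$ by standard elliptic regularity with respect to the domain (Remark~\ref{RemReg}), so $\alpha\in C^\infty$ in $(t,x)$ when $(q,g)\in C^\infty([0,T];\mathcal Q)\times C^\infty_0([0,T];\mathcal C)$, and $F_1,F_2$ are well-defined smooth maps; as noted after Definition~\ref{def-forces}, only the trace and tangential derivative of $\alpha$ on $\partial\mathcal S(q)$ actually enter, since $\partial_n\alpha=0$ there. The converse direction is handled, as in Theorem~\ref{reform}, by running the same manipulations backwards: if $(q,g)$ is a controlled solution and $u$ solves the stated div/curl problem, one reconstructs $\pi$ from Bernoulli (up to an additive function of $t$ fixed by a normalization), checks the Euler equation \eqref{eu} holds, and verifies that the ODE \eqref{renform} forces \eqref{newt}.

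The main obstacle I anticipate is the bookkeeping of the time derivative $\partial_t$ acting through the moving fluid domain $\mathcal F(q(t))$ in the term $-\partial_t u$: one must justify differentiating $\int_{\mathcal F(q)}\nabla\alpha\cdot\nabla\Phi$ and the transport-type terms rigorously (e.g. via a change of variables to a fixed domain, or Reynolds transport theorem), and confirm that the resulting geometric correction terms reorganize precisely into the stated $F_1,F_2$ without leaving a residue. This is essentially the same kind of computation as in \cite{GMS} but with the additional $\alpha$-contributions, so it should go through; it is, however, the step where a careless sign or a missed boundary term would break the identity.
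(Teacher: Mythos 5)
Your proposal follows essentially the same route as the paper's own proof of Proposition~\ref{reformC}: substitute the augmented decomposition \eqref{decomp2} into \eqref{Bernou} and \eqref{renew}, invoke Lemmas 32--34 of \cite{GMS} for the terms already present in the uncontrolled case, and integrate the new $\alpha$-terms by parts to produce $F_1+F_2$. The moving-domain concern you flag for $-\partial_t u$ does not actually materialize for the $\alpha$-contributions: since $\partial_t$ commutes with $\nabla_x$ and the integration by parts is a plain Green's identity on the snapshot domain $\mathcal F(q(t))$ (using $\Delta\Phi_i=0$ and $\partial_n\Phi_i=0$ on $\partial\Omega$), the four extra volume integrals reduce directly to the boundary integrals defining $F_{1,a}$, $F_{1,b}$, $\gamma F_{1,c}$ and $F_2[\partial_t\alpha]$ with no transport corrections left over.
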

Proposition \ref{reformC} therefore extends 
Theorem \ref{reform} to the case with an external boundary control 
(in particular one recovers Theorem \ref{reform} in the case where $g$ is identically vanishing). 
\begin{proof}
We proceed as in the proof of Theorem \ref{reform} recalled above, with some modifications due to the 
 extra term involved in the decomposition of the fluid velocity, compare \eqref{praud} and \eqref{decomp2}. 
 In particular some extra terms appear in the right hand side of \eqref{devdev}  after substituting the right hand side of \eqref{decomp2} for $u$ in 
   \eqref{Bernou}. Using some integration by parts and the properties of the Kirchhoff functions we obtain integrals on $\partial\mathcal{S}(q)$ whose sum precisely gives
   $F_1( q,q' ,\gamma) [\alpha(t,\cdot)] +   F_2  (q) [\partial_t\alpha(t,\cdot)]$. This allows to conclude. 
\end{proof}

%%%%%%%%%%%%%%%%%%%%%%%%%%
\section{Reduction to the case where the displacement, the velocities and the circulation are small}
\label{RELO}

For $\delta>0$, we introduce the set
\begin{align}\label{qd}
Q_\delta =\{q\in\Omega\times\mathbb{R}:\ d(\mathcal{S}(q),\partial\Omega) > \delta\}.
\end{align}

The goal of this section is to prove that Theorem \ref{main} can be deduced from the following result. The balls have to be understood for the Euclidean norm (rather than for the metric $\mathcal{M}_g+\mathcal{M}_a(q)$).
\begin{theorem}
\label{lmain}
Given $\delta>0$, $\mathcal{S}_0\subset\Omega$ bounded, closed, simply connected with smooth boundary, which is not a disk,  $q_0 $ in $\mathcal{Q}_\delta$ and 
 $T>0$, there exists  $r>0$ such that for any 
 $q_1 $ in  ${B}(q_0,r)$, 
for any $\gamma\in\mathbb{R}$ with $|\gamma|\leq r$ and for any $q'_0 , q'_1\in {B}(0,r)$, 
there is a controlled solution $(q,g) $ in $C^\infty ([0,T];\mathcal{Q}_\delta) \times C_0^{\infty}([0,T]\times\Sigma)$  such that
$(q,q')(0)=(q_0,q'_0)$ and  $(q,q')(T)=(q_1,q'_1)$.
\end{theorem}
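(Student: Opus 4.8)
By Proposition~\ref{reformC} it is enough to construct, for every target $(q_1,q_1')$ in a small ball, a control $g\in C_0^{\infty}((0,T)\times\Sigma)$ with $\int_\Sigma g(t,\cdot)\,d\sigma=0$ such that the controlled solution of the ODE \eqref{renform} issued from $(q,q')(0)=(q_0,q_0')$ stays in $\mathcal{Q}_\delta$ and satisfies $(q,q')(T)=(q_1,q_1')$; the fluid velocity $u$ is then recovered from the div/curl system and is automatically smooth. I would proceed in two stages: a topological reduction to an \emph{approximate} controllability statement, and the construction of an approximately controlling control by impulsive forcing.

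\textbf{Stage 1: reduction to approximate controllability via Brouwer.} Fix $\rho>0$ small enough that $\overline{B(q_0,2\rho)}\subset\mathcal{Q}_\delta$ and that the geodesic machinery below is available, and set $\overline B:=\overline{B(q_0,\rho)}\times\overline{B(0,\rho)}\subset\mathbb{R}^3\times\mathbb{R}^3$. I will construct, for each prescribed endpoint $z=(\tilde q_1,\tilde q_1')\in\overline B$, a control $g[z]$ depending continuously on $z$ (and on the fixed data $q_0,q_0',\gamma$), such that the endpoint map $\mathcal E:z\mapsto(q,q')(T)$ is well defined, continuous, keeps the trajectory in $\mathcal{Q}_\delta$, and satisfies $\|\mathcal E(z)-z\|\le\eta$ on $\overline B$, where $\eta>0$ can be made as small as we please. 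Granting this with $\eta:=\rho/2$ and setting $r:=\rho/2$, for any target $(q_1,q_1')\in B(q_0,r)\times B(0,r)$ and any $|\gamma|,|q_0'|\le r$ the continuous self-map $z\mapsto z-\mathcal E(z)+(q_1,q_1')$ of $\overline B$ has a fixed point by Brouwer's theorem, i.e. there is $z$ with $\mathcal E(z)=(q_1,q_1')$, and $g[z]$ is the sought control. The only nontrivial input here is the well-posedness and continuity of $\mathcal E$, which follow from continuous dependence of ODE solutions on control and data, once the construction of Stage~2 is carried out continuously in $z$.

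\textbf{Stage 2: approximate controllability by impulsive control.} Take first $\gamma=0$. Since $\tilde q_1$ is close to $q_0$, there is a unique short geodesic $\bar q$ of the metric $\mathcal M_g+\mathcal M_a$ on $\mathcal{Q}_\delta$ joining $q_0$ to $\tilde q_1$, solving \eqref{geod00}, depending smoothly on $\tilde q_1$ and staying $O(\rho)$-close to $q_0$; for $\gamma\neq0$ one replaces it by the solution of the same two-point problem for the full right-hand side $\gamma^2E+\gamma\bar q'\times B$, still a small perturbation of a geodesic. In general $\bar q'(0)\neq q_0'$ and $\bar q'(T)\neq\tilde q_1'$, so one aims at the toy model \eqref{toy00} with $v_0:=(\mathcal M_g+\mathcal M_a(q_0))(\bar q'(0)-q_0')$ and $v_1:=-(\mathcal M_g+\mathcal M_a(\tilde q_1))(\bar q'(T)-\tilde q_1')$, which is solved exactly by following $\bar q$ (the impulse at $0^+$ corrects $q_0'$ into $\bar q'(0)$, the impulse at $T^-$ corrects $\bar q'(T)$ into $\tilde q_1'$), and one notes $v_0,v_1=O(\rho)$. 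One then realises these impulses physically by a control of the form \eqref{ctrl00}, $g(t,x)=\beta_0(t)\bar g_0(x)+\beta_1(t)\bar g_1(x)$, with $\beta_0,\beta_1\in C_0^{\infty}((0,T))$ supported in disjoint neighbourhoods of $0$ and $T$ and with $\beta_i^2$ a smooth approximation of the unit Dirac mass of concentration scale $\eps_0$, and with $\bar g_0,\bar g_1\in\mathcal C$. Because $F_1$ is quadratic and $F_2$ linear in $\alpha=\beta_0\mathcal A[q,\bar g_0]+\beta_1\mathcal A[q,\bar g_1]$, the supports are disjoint, and $\partial_n\mathcal A=0$ on $\partial\mathcal S(q)$, the leading contribution to the right-hand side of \eqref{renform} near $0$ is $\beta_0(t)^2\,\mathcal I(q(t))[\bar g_0]$, where $\mathcal I(q)[\bar g]:=-\tfrac12\int_{\partial\mathcal S(q)}|\partial_\tau\mathcal A[q,\bar g]|^2\,\partial_n\Phi(q,\cdot)\,d\sigma$; the remaining pieces (the $q'$- and $\gamma$-dependent part of $F_1$, weighted by $\int\beta_i\to0$; the $F_2[\partial_t\alpha]$ contributions, handled by integration by parts since the $\beta_i$ are compactly supported; and the effect of the time dependence of $q$) are all controlled by a Gronwall estimate and tend to $0$ as $\eps_0\to0$ for fixed $\rho$. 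Thus it remains to pick $\bar g_0\in\mathcal C$ with $\mathcal I(q_0)[\bar g_0]=v_0$ and $\bar g_1\in\mathcal C$ with $-\mathcal I(\tilde q_1)[\bar g_1]=v_1$, up to arbitrarily small error and continuously in the data. This is the content of Section~\ref{S4}: via the Riemann map of $\mathcal F(q)$ and a Runge/Laurent-type approximation one makes the tangential trace $\partial_\tau\mathcal A[q,\bar g]|_{\partial\mathcal S(q)}$ approximate the tangential derivative of essentially any smooth function, hence $|\partial_\tau\mathcal A|^2$ any nonnegative density; since $\mathcal S_0$ is not a disk, $(x-h)^\perp\cdot n$ has zero mean on $\partial\mathcal S(q)$ but does not vanish identically, while $(n_1,n_2)$ sweeps the whole unit circle, so $0$ lies in the interior of the convex hull of $\{-\tfrac12\partial_n\Phi(q,x):x\in\partial\mathcal S(q)\}$ and $\mathcal I(q)[\,\cdot\,]$ attains every vector of $\mathbb{R}^3$. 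Choosing $\eps_0$ and the approximations small enough gives $\mathcal E(z)=z+O(\eta)$ uniformly on $\overline B$, and a continuation argument keeps $q(\cdot)$ in $\mathcal{Q}_\delta$; the required regularity of $(q,u)$ follows from smoothness of the building blocks and elliptic regularity for the div/curl system.

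\textbf{Main obstacle.} The hard part is Stage~2, and within it the solvability --- \emph{with continuous dependence on} $(q,v)$ --- of the nonlinear moment problem ``find $\bar g\in\mathcal C$ with $\mathcal I(q)[\bar g]=v$'' for arbitrary $v\in\mathbb{R}^3$: the control $\bar g$ influences $\partial\mathcal S(q)$ only through the remote harmonic field $\mathcal A[q,\bar g]$, whose normal trace on $\partial\mathcal S(q)$ is forced to vanish (cf. \eqref{pot}), so one must realise a prescribed three-dimensional quadratic functional of a boundary trace that is steered only indirectly from $\Sigma\subset\partial\Omega$. This forces the conformal-mapping and Runge-approximation arguments of Section~\ref{S4} and is exactly where the hypothesis that $\mathcal S_0$ is not a disk is indispensable. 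A secondary, genuinely technical point (Sections~\ref{proof-approx} and~\ref{S3}) is making the passage from the toy model \eqref{toy00} to the true ODE \eqref{renform} quantitative and uniform in $z$: one writes \eqref{renform} in integral form, uses that $\beta_0^2,\beta_1^2$ are approximate units concentrated at the two endpoints while every neglected term carries a factor tending to $0$, and closes a Gronwall estimate valid as long as the trajectory stays in $\mathcal{Q}_\delta$ --- which, by the smallness built into $\rho$, it does.
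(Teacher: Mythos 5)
Your overall route is the same as the paper's: a Brouwer-type topological reduction from exact to approximate controllability (the paper uses Lemma \ref{top}, borrowed from \cite{G-R}; note in passing that your map $z\mapsto z-\mathcal{E}(z)+(q_1,q_1')$ is a self-map of $\overline{B}$ only if $r$ is taken somewhat smaller than $\rho/2$, a harmless constant adjustment, cf. the factor $2\sqrt{5}$ in the paper), followed by impulsive controls $g=\beta_0\bar g_0+\beta_1\bar g_1$ concentrated near $t=0$ and $t=T$ that use the quadratic part $F_{1,a}$ of the force to put the state on a short geodesic, the spatial profiles being produced by conformal mapping and Runge-type approximation, with the non-disk hypothesis entering through the fact that the conical hull of $\{\partial_n\Phi(q,x),\,x\in\partial\mathcal{S}(q)\}$ is all of $\mathbb{R}^3$.

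The genuine gap is your treatment of $F_2(q)[\partial_t\alpha]$. Near $t=0$ one has $\alpha(t,\cdot)=\beta_{\varepsilon}(t)\,\mathcal{A}[q(t),\bar g_0]$, so this term contains $\beta'_{\varepsilon}(t)\,F_2(q(t))\big[\mathcal{A}[q(t),\bar g_0]\big]$, of pointwise size $\varepsilon^{-3/2}$. Integrating by parts in time, as you propose, is not enough: it leaves the boundary term $\beta_{\varepsilon}(t)\,F_2(q(t))\big[\mathcal{A}[q(t),\bar g_0]\big]$ at the running time $t$, of size $\varepsilon^{-1/2}$ inside the impulse interval, so the a priori bound one can close on $q'$ there degrades to $O(\varepsilon^{-1/2})$ instead of $O(1)$. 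That ruins the passage to the toy model: the quadratic terms $\langle\Gamma(q),q',q'\rangle$, $(D\mathcal{M}_a\cdot q')\cdot q'$ and the cross term $F_{1,b}(q,q')$ then contribute $O(\varepsilon\cdot\varepsilon^{-1})=O(1)$ to the velocity increment over $[0,2\varepsilon]$, so the endpoint error $\|\mathcal{E}(z)-z\|$ is not shown to tend to $0$ as the concentration scale goes to $0$ at fixed $\rho$, and the Brouwer step can no longer be run with arbitrarily small $\eta$. The paper neutralizes precisely this term by imposing the extra moment condition \eqref{csR}, $\int_{\partial\mathcal{S}(q)}\overline{\alpha}\,\partial_n\Phi(q,\cdot)\,d\sigma=0$, on the control potential in Proposition \ref{farcontr}; with it, the dangerous contribution becomes $\beta'_{\varepsilon}$ times a difference that is Lipschitz in $q-q_0$, giving the bound \eqref{EstF2} and letting the Gronwall argument of Lemma \ref{bddtrajg} close with $q'$ bounded uniformly in $\varepsilon$. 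This constraint is not free: it is the reason Section \ref{S4} needs the extra index $j\in\{1,2,3,4\}$ and the linear-dependence argument of Lemma \ref{Lem3emeCond}. Your construction imposes no such condition on $\bar g_0,\bar g_1$, so as written the claim that all remaining pieces vanish in the limit fails for $F_2[\partial_t\alpha]$.
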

Remark in particular that for $r>0$ small enough, $B(q_{0},r)$ is included in the connected component of $\mathcal{Q}_\delta$ containing $q_0$.
\begin{proof}[Proof of Theorem \ref{main} from Theorem \ref{lmain}]

We proceed in two steps: first we use a time-rescaling argument in order to deduce from Theorem \ref{lmain} a more general result covering the case
where the initial and final velocities $q'_0 $ and $ q'_1$ and the circulation $\gamma$ are large. This argument is reminiscent of a  time-rescaling argument used by J.-M. Coron for the Euler equation \cite{Coron:EC}, which has been also used in \cite{G-R} in order to pass from the potential case to the case with vorticity. 
Then we use a compactness  argument  in order to deal with the case where $q_0 $ and $q_1$ are remote (but of course in the same  connected component of $\mathcal{Q}_\delta$). \par
The time-rescaling argument relies on the following observation: 
it follows from \eqref{renform} that $(q,g)$ is a controlled solution on $[0,T]$ with circulation $\gamma$ if and only if 
$(q^\lambda,g^\lambda)$ is a controlled solution on $[0, \lambda T ]$ with circulation $\frac{\gamma}{\lambda}$,
where $(q^\lambda,g^\lambda)$ is defined by 
\begin{equation}
  \label{sca-lambda}
q^{\lambda}(t) :={q}\left(\frac{t}{\lambda}\right)   \text{ and } g^{\lambda}(t,x) :=\frac{1}{\lambda}{g}\left(\frac{t}{\lambda},x \right) .
\end{equation}
Of course the initial and final conditions 
$$
(q,q')(0)= (q_0 ,q'_0 )  \text{ and }   (q,q' )(T)= (q_1 ,  q'_1)
$$
translate respectively into 
\begin{equation} \label{RR}
(q^{\lambda}, (q^{\lambda})')(0) = \left(q_0, \frac{q'_0}{\lambda} \right)
\text{ and } (q^{\lambda} , (q^\lambda)')(\lambda T) = \left(q_1,  \frac{q'_1}{\lambda}\right) .
\end{equation}
Now consider  $q_0 $ in $\mathcal{Q}_\delta$ and $q_1 $ in  $\overline{B}(q_0,r)$ in the same connected component of $\mathcal{Q}_\delta$ as $q_0$, with $r>0$ as in Theorem \ref{lmain}, and $q'_0 $, $ q'_1$ and  $\gamma$ without size constraint. For $\lambda$ small enough, $\left(q_0, \lambda q'_0\right)$, $\left(q_1, \lambda q'_1\right)$ and $\lambda \gamma$ satisfy the assumptions of Theorem~\ref{lmain}. Hence there exists a 
controlled solution $(q,g)$ on $[0, T ]$, achieving $(q,q')(0)=(q_0,\lambda q'_0)$ and 
$(q,q')( T)=(q_1, \lambda q'_1)$. 
On the other hand, the corresponding trajectory $q^\lambda$ constructed above will satisfy the conclusions of Theorem \ref{main} on $[0,\lambda T]$, in particular that $(q^{\lambda}, (q^{\lambda})')(0) = \left(q_0, q'_0 \right)
\text{ and } (q^{\lambda} , (q^\lambda)')(\lambda T) = \left(q_1,  q'_1\right) .$
Moreover we can assume that it is the case without loss of generality that $\lambda$ is small, and in particular that $\lambda \leq 1$. Thus the result is obtained but in a shorter time interval.

To get to the desired time interval, using that Equation  \eqref{renform}  enjoys some invariance properties by translation and time-reversal (up to the change of the sign of $\gamma$) it is sufficient to glue together an odd number, say $2N+1$ with $N$ in $\mathbb N^*$, of  appropriate controlled solutions each defined on a time interval of length $\lambda T$ with $\lambda =\frac{ 1 }{ 2N+1} $,  going 
back and forth between  $(q_0,q'_0)$ and $(q_1,q'_1)$ until time $T=(2N+1)\lambda T$. Moreover one can see that the gluings are not only $C^2$ but even $C^\infty$. 

We have therefore already proven that  Theorem \ref{main} is true 
 in the case where $q_1 $  is close to $q_0$, or more precisely for any   $q_0 $ in $\mathcal{Q}_\delta$ and 
 $q_1 $ in  $\overline{B}(q_0,r_{q_0} )$.% and  in the same connected component of $\mathcal{Q}_\delta$ than $q_0$,  for some  $r_{q_0} >0$.

For the general case where $q_0$ and $q_1 $ are in the same connected component of $\mathcal{Q}_\delta$ for some $\delta>0$,  without the closeness condition, 
% follows by  a compactness  argument, making use again of the invariances of the equation \eqref{renform} and of 
we use again a gluing process. 
Consider indeed a smooth curve from $q_0$ to $q_1$. For each point $q$ on this curve, there is a $r_q>0$ such that for any $\tilde{q}$ in $B(q,r_q)$, any $q'_0$, $q'_1$ and any $\gamma$, one can connect $(q,q_0')$ to $(\tilde{q},q_1')$ by a solution of the system, for any time $T>0$.
Extract a finite subcover of the curve  by the balls $B(q,r_q)$. Therefore we find
 $N \geq 2$ and $(q_{\frac{i}{N} })_{i=1,\ldots,N-1}$ in the same connected component of $\mathcal{Q}_\delta$ as $q_0$ such that 
 for any $i=1,\ldots,N$, $q_{\frac{i}{N}}$ is  in $\overline{B}(q_{\frac{i-1}{N}} ,r_{q_\frac{i-1}{N}} )$ (note that this includes $q_0$ and $q_1$). 
Therefore, using again the local result obtained above, there exist some controlled solutions from $( q_{\frac{i-1}{N}} , 0 )$ to $(q_{\frac{i}{N}} , 0 )$ (for $i=1$ and $i=N$ we use $(q_0,\frac{q'_0}{N})$ and $(q_1,\frac{q'_1}{N})$ rather than $(q_0,0)$ and $(q_1,0)$), each on a  time interval  of length $T$ associated with circulation $\frac{\gamma}{N} $.  One deduces by time-rescaling some controlled solutions associated with circulation $\gamma$ on a time interval of length $\frac{ T }{ N}$.  
Gluing them together leads to the desired controlled solution.
\end{proof}
%
%
%
%
%
%
%%%%%%%%%%%%%%%%%%%%%%%%%%%%%%%%%%%%%%%%%%%%%%%%%%%%%%%%%%%%%%%%%%%%%%%%%%%%%%%%%%%%%%%%%%%%%%%%%%%%%%%%
%
%
%
%
%
%
\section{Reduction to an approximate controllability result}
\label{REAP}
The goal of this section is to prove that Theorem~\ref{lmain} can be deduced from the following approximate controllability result thanks to a topological argument already used in \cite{G-R}, see Lemma~\ref{top} below.
Let us mention that a similar argument has also  been used for control purposes but in other contexts, see e.g.
 \cite{Aronsson,BL,Grasse1,Grasse2}. 
\begin{theorem} \label{approx}
Given $\delta>0$, $\mathcal{S}_0\subset\Omega$ bounded, closed, simply connected with smooth boundary,
which is not a disk, $q_0$ in $\mathcal{Q}_\delta$ and $T>0$,
there is $\tilde{r}>0$ such that $B(q_0 , \tilde{r}) $ is included in the same connected component of $\mathcal{Q}_\delta$ as $q_0$, and furthermore,
for any $\eta >0$, there exists $r'=r'(\eta)>0$ such that
for any $\gamma\in\mathbb{R}$ with $|\gamma|\leq r'$ and for any $q'_0 $ in $\overline{B}(0,\tilde{r})$, 
there is a mapping
$$
\mathcal{T}: \overline{B}\big( (q_0 , q'_0), \tilde{r} \big) \rightarrow C^\infty ([0,T]; \mathcal{Q}_\delta)
$$
which with $ ({q}_1 , {q}'_1) $ associates $q$ where $(q,g) $ is a controlled solution associated with the initial data $(q_0 , q'_0)$,
such that the 
 mapping
$$({q}_1 , {q}'_1)  \in \overline{B}\big( (q_0 , q'_0), \tilde{r} \big) \mapsto \big(\mathcal{T}({q}_1 , {q}'_1 )  , \mathcal{T}({q}_1 , {q}'_1 )'   \big) (T)   \in \mathcal{Q}_\delta \times \R^3 $$
is continuous and such that 
 for any $ ({q}_1 , {q}'_1) $ in $ \overline{B}\big( (q_0 , q'_0), \tilde{r} \big)$,
\begin{equation*}
\|  \big(\mathcal{T}({q}_1 , {q}'_1 )  , \mathcal{T}({q}_1 , {q}'_1 )'   \big) (T)    -( {q}_1 , {q}'_1 ) \| \leqslant \eta .
\end{equation*}
\end{theorem}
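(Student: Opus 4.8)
The plan is to reduce the approximate controllability of the full ODE \eqref{renform} to the exact controllability of the toy model \eqref{toy00}, and to show the error between the two can be made arbitrarily small in $C^1([0,T])$ by a suitable impulsive choice of control. Concretely, I would first fix $q_0\in\mathcal{Q}_\delta$ and choose $\tilde r>0$ small enough that $\overline B(q_0,2\tilde r)$ stays in $\mathcal{Q}_\delta$ and that, for each $(q_1,q_1')$ in the relevant ball, there is a unique geodesic $\bar q=\bar q(q_1;\cdot)$ for the metric $\mathcal{M}_g+\mathcal{M}_a$ joining $q_0$ to $q_1$ on $[0,T]$, depending smoothly on $q_1$ (this is where the smallness in Theorem~\ref{lmain}, i.e. the closeness of $q_0$ and $q_1$, is used: it guarantees the geodesic exists, stays in $\mathcal{Q}_\delta$, and varies continuously). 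Then I set $v_0:=\mathcal{M}(q_0)(\bar q'(0)-q_0')$ and $v_1:=-\mathcal{M}(q_1)(\bar q'(T)-q_1')$ (writing $\mathcal M=\mathcal{M}_g+\mathcal{M}_a$), so that the solution $\tilde q$ of \eqref{toy00} with these impulses satisfies $(\tilde q,\tilde q')(0)=(q_0,q_0')$ and $(\tilde q,\tilde q')(T)=(q_1,q_1')$ exactly; note $v_0,v_1$ depend continuously on $(q_1,q_1')$ and stay bounded.

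Next I would construct the actual control. Following the scheme sketched in the introduction, take $g(t,x)=\beta_0(t)\bar g_0(x)+\beta_1(t)\bar g_1(x)$, where $\beta_0^2,\beta_1^2$ are smooth approximations (supported in small intervals near $0^+$ and $T^-$) of Dirac masses $\delta_0,\delta_T$, and where the spatial profiles $\bar g_0,\bar g_1\in\mathcal C$ are produced by the complex-analysis construction of Section~\ref{S4} so that the resulting force term $F_1(q,q',0)[\alpha]+F_2(q)[\partial_t\alpha]$, when integrated against a test function in time, produces impulses $\approx v_0$ near $t=0$ and $\approx v_1$ near $t=T$ (the quadratic dependence of $F_1$ on $\alpha$, hence on $g$, is exactly why one takes square roots). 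Since $F_1,F_2$ depend continuously on $q$ and the data, one can arrange $\bar g_0,\bar g_1$ (and the approximation parameter of the Diracs) to depend continuously on $(q_1,q_1')$; this gives the map $\mathcal T:(q_1,q_1')\mapsto q$ by solving \eqref{renform} with this control, and $q$ together with $\mathcal T(q_1,q_1')'$ at time $T$ depends continuously on $(q_1,q_1')$ by continuous dependence of ODE solutions on parameters (on the time interval where the solution is defined and stays in $\mathcal{Q}_\delta$).

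Then I would invoke the comparison Proposition announced for Section~\ref{S3}: in the regime where $\gamma$ and the remaining control quantities are small and the Diracs are sharply approximated, a Gronwall-type estimate shows that the solution $q$ of \eqref{renform} stays within $O(\eta)$ of the solution $\tilde q$ of \eqref{toy00} in $C^1([0,T])$, provided we first pick $r'=r'(\eta)$ small (to kill the $\gamma^2E$ and $\gamma q'\times B$ terms and the cross terms in $F_1$ involving $\gamma$) and then pick the time-approximation parameter small enough. Since $(\tilde q,\tilde q')(T)=(q_1,q_1')$ exactly, this yields $\|(\mathcal T(q_1,q_1'),\mathcal T(q_1,q_1')')(T)-(q_1,q_1')\|\le\eta$, which is the claim; one also checks along the way that $q$ remains in $\mathcal{Q}_\delta$ for all $t\in[0,T]$ by taking $\tilde r$ small (the geodesic $\bar q$ stays well inside $\mathcal{Q}_\delta$, and $q$ is $O(\eta)$-close to it away from the two short impulse intervals, where the displacement is also controlled).

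The main obstacle I expect is making the impulsive-limit argument rigorous \emph{uniformly} and \emph{continuously} in $(q_1,q_1')$: one must show that the force term $F_1(q,q',0)[\alpha]+F_2(q)[\partial_t\alpha]$ generated by $g=\beta_0\bar g_0+\beta_1\bar g_1$ really does converge, in the sense of distributions against $C^1$ test functions, to $v_0\delta_0+v_1\delta_T$ as the approximation sharpens — and that the error is controlled by a Gronwall constant that does not blow up as the Diracs concentrate. This requires the careful estimates on $F_1,F_2$ and on $\alpha=\mathcal A[q,g]$, $\partial_t\alpha$ (the latter involving $\partial_t$ of the domain through $q'$), which is precisely the content deferred to Sections~\ref{S3} and~\ref{S4}; here I would only state the needed quantitative lemmas and assemble them, together with the topological setup, to produce $\mathcal T$ with the asserted continuity and $\eta$-approximation.
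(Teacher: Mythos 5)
Your proposal follows essentially the same route as the paper: geodesic interpolation between $q_0$ and $q_1$, the impulses $v_0,v_1$ defined exactly as in \eqref{defv}, the impulsive control $g=\beta_0\bar g_0+\beta_1\bar g_1$ with square roots of approximate Diracs and spatial profiles from the complex-analysis construction, and a Gronwall-type comparison (Propositions~\ref{apr1}--\ref{gimpact} in the paper) to conclude. One refinement the paper makes that your sketch elides: it is \emph{only} the quadratic-in-$\alpha$ piece $F_{1,a}(q)[\alpha]$ of the force that is designed to approximate $v_0\delta_0+v_1\delta_T$; the pieces $F_{1,b}$, $\gamma F_{1,c}$ and $F_2[\partial_t\alpha]$ are treated as error terms, and controlling $F_2$ in the impulsive limit is nontrivial because $\partial_t\alpha$ contains a factor $\beta_\varepsilon'$ of size $\varepsilon^{-3/2}$. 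The paper tames this by imposing the extra orthogonality constraint \eqref{csR} on the spatial profile $\overline g$, which lets one subtract the dangerous $\beta_\varepsilon'$-contribution evaluated at $q=q_0$ (resp.\ $q_1$) and gain a factor $|q_{\varepsilon,0}-q_0|$, giving the estimate \eqref{EstF2}; without this cancellation the Gronwall constant would blow up as the Diracs sharpen. If you wish to flesh out your ``main obstacle'' paragraph into a complete argument, this is the point you must address explicitly.
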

The proof of Theorem \ref{approx} will be given in Section \ref{proof-approx}. 
Here we prove that Theorem \ref{lmain} follows from Theorem \ref{approx}.
\begin{proof}[Proof of Theorem \ref{lmain} from Theorem \ref{approx}]
Let $\delta>0$, $\mathcal{S}_0\subset\Omega$ bounded, closed, simply connected with smooth boundary, which is not a disk, $q_0 $ in $\mathcal{Q}_\delta$ and 
 $T>0$. 
 Let $\tilde{r}>0$ as in Theorem \ref{approx} and $\eta= \frac{ \tilde{r} }{ 2} $. We deduce that 
for any $\gamma\in\mathbb{R}$ with $|\gamma|\leq r'=r'( \frac{ \tilde{r} }{ 2})$ and
 $q'_0$ in $ \overline{B}(0,\tilde{r})$,
there is a mapping $\mathcal{T}: \overline{B}\big( (q_0 , q'_0), \tilde{r} \big) \rightarrow C^\infty ([0,T]; \mathcal{Q}_\delta ) $ which maps 
$ ( {q}_1 , {q}'_1 ) $ to $q$ where $(q,g) $ is a controlled solution
associated with the initial data $(q_0 , q'_0)$, such that for any  $ ( {q}_1 , {q}'_1 ) $ in $ \overline{B}\big( (q_0 , q'_0), \tilde{r} \big)$,
$ \| \big(\mathcal{T}({q}_1 , {q}'_1 )  , \mathcal{T}({q}_1 , {q}'_1 )'   \big) (T)  -( {q}_1 , {q}'_1 ) \| \leqslant  \frac{ \tilde{r} }{ 2} .$
We define a mapping 
$f $ from $ \overline{B}\big( (q_0 , q'_0), \tilde{r} \big)$ to $\mathbb R^6$ which maps $({q}_1 , {q}'_1 )$ to 
$    f ({q}_1 , {q}'_1 ) := \big(\mathcal{T}({q}_1 , {q}'_1 )  , \mathcal{T}({q}_1 , {q}'_1 )'   \big) (T)   $. 
Then we apply the following lemma borrowed from \cite[pages 32-33]{G-R}, to
 $w_0 = ({q}_0 , {q}'_0 ) $ and $ \kappa = \tilde{r} $.
\begin{lemma}\label{top}
Let $w_0 \in\mathbb{R}^n,$ $\kappa>0$, $f:\overline{B}(w_0,\kappa)\to\mathbb{R}^n$ a continuous map such that we have
$|f(w)-w|\leq \frac{\kappa}{2}$ for any $x$ in $\partial B(w_0,\kappa).$
Then
$B(w_0,\frac{\kappa}{2})\subset f(\overline{B}(w_0,\kappa)).$
\end{lemma}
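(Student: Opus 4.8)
\textbf{Proof plan for Lemma~\ref{top}.} The statement is a standard degree-theoretic fact, and the plan is to prove it by a homotopy argument on the Brouwer degree (equivalently, via the no-retraction theorem). Without loss of generality I would take $w_0 = 0$ by translating, so we deal with the closed ball $\overline{B}(0,\kappa)$ and a continuous $f$ with $|f(w)-w|\le \kappa/2$ for all $w\in\partial B(0,\kappa)$; the goal is $B(0,\kappa/2)\subset f(\overline{B}(0,\kappa))$.

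\emph{Step 1: reduce to a fixed-point/degree statement.} Fix an arbitrary target $p\in B(0,\kappa/2)$; I must produce $w\in\overline{B}(0,\kappa)$ with $f(w)=p$. Consider the continuous map $g(w):=w-f(w)+p$ on $\overline{B}(0,\kappa)$. Finding a zero of $f(\cdot)-p$ is the same as finding a fixed point of $g$. For $w\in\partial B(0,\kappa)$ we estimate $|g(w)|\le |w - f(w)| + |p| < \kappa/2 + \kappa/2 = \kappa$ (using the hypothesis and $p\in B(0,\kappa/2)$), hence $g$ maps $\partial B(0,\kappa)$ — indeed all of $\overline{B}(0,\kappa)$ after one checks the interior is not needed — into $B(0,\kappa)\subset\overline{B}(0,\kappa)$. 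Actually the cleanest route: $g$ need not map the ball to itself on the interior, so instead I would argue directly with degree on the map $F(w):=f(w)-p$.

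\emph{Step 2: degree computation via homotopy.} Define the homotopy $H(t,w):= (1-t)\,w + t\,(f(w)-p) = w - t\,(w - f(w) + p)$ for $t\in[0,1]$, $w\in\overline{B}(0,\kappa)$. On the boundary $w\in\partial B(0,\kappa)$ we have, for every $t\in[0,1]$,
\begin{equation*}
|H(t,w)| \ge |w| - t\,|w-f(w)+p| > \kappa - 1\cdot\Bigl(\tfrac{\kappa}{2}+\tfrac{\kappa}{2}\Bigr) \ge 0,
\end{equation*}
wait — this only gives $>0$ when the bound on $|w-f(w)+p|$ is strict, which holds since $|p|<\kappa/2$. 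Thus $H(t,\cdot)$ never vanishes on $\partial B(0,\kappa)$, so $H$ is an admissible homotopy and $\deg(F,\,B(0,\kappa),\,0) = \deg(H(1,\cdot),\ldots) = \deg(H(0,\cdot),\,B(0,\kappa),\,0) = \deg(\mathrm{id},\,B(0,\kappa),\,0) = 1 \neq 0$. By the solution property of the Brouwer degree there exists $w\in B(0,\kappa)$ with $F(w)=0$, i.e. $f(w)=p$. Since $p\in B(0,\kappa/2)$ was arbitrary, $B(0,\kappa/2)\subset f(\overline{B}(0,\kappa))$, and translating back by $w_0$ finishes the proof.

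\emph{Main obstacle.} There is essentially no deep obstacle here; the only care needed is (i) keeping the boundary estimate strict so the homotopy is nowhere zero on $\partial B(w_0,\kappa)$ — this is exactly why the hypothesis is stated as $|f(w)-w|\le\kappa/2$ while the conclusion only claims the \emph{open} ball of radius $\kappa/2$ — and (ii) if one prefers to avoid invoking Brouwer degree, recasting Step~2 as a no-retraction argument: were $p$ not in the image, $w\mapsto w_0 + \kappa\frac{w - \Pi(w)}{|w-\Pi(w)|}$ (with $\Pi$ a suitable projection built from $f$ and $p$) would retract $\overline{B}(w_0,\kappa)$ onto its boundary, contradicting Brouwer's theorem. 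Either formulation is routine; I would simply cite the degree argument as in \cite{G-R}.
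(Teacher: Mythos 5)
Your degree-theoretic proof is correct: on $\partial B(w_0,\kappa)$ the estimate $|w-f(w)+p|\le\kappa/2+|p|<\kappa$ (strict because $|p|<\kappa/2$) shows the affine homotopy from the identity to $f(\cdot)-p$ never vanishes on the boundary, so $\deg(f(\cdot)-p,B(w_0,\kappa),0)=\deg(\mathrm{id},B(w_0,\kappa),0)=1$ and a preimage of $p$ exists in $\overline{B}(w_0,\kappa)$. The paper does not include its own proof of Lemma~\ref{top} but cites it from \cite[pages 32--33]{G-R}, where the argument is exactly this Brouwer-degree (equivalently, no-retraction) argument, so your route is the intended one; you also implicitly fixed the typo "$x$" for "$w$" in the hypothesis.
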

This allows to conclude the proof of Theorem \ref{lmain} by setting $r=\min\left\{\frac{\tilde{r}}{2\sqrt{5}},r'(\frac{\tilde{r}}{2})\right\}$, since the conditions 
 $q_1 \in {B}(q_0,r)$, 
 $|\gamma|\leq r$ and $q'_0 , q'_1\in {B}(0,r)$ imply $|\gamma|\leq r'( \frac{ \tilde{r} }{ 2})$
 and $(q_1,q'_1)\in B((q_0,q'_0), \frac{ \tilde{r} }{ 2} )$.
 \end{proof}

%%%%%%%%%%%%%%%%%%%%%%%%%%%%%%%%%%%%%%%%%%%%%%%%%%%%%%%%%%%%%%%%%%%%%%%%%%%%%%%%%%%%%%%%%%%%%%%%%%%%%%%%
%
%
%
%
%
\section{Proof of the approximate controllability result Theorem~\ref{approx}}
\label{proof-approx}
In this section we prove Theorem \ref{approx} by  exploiting the geodesic feature of the uncontrolled system with zero circulation, cf. the observation below Theorem \ref{reform}.
To do so, we will use some well-chosen impulsive controls which allow to modify  the velocity $q'$ in a short time interval and put the state of the system on a prescribed geodesic (and use that $|\gamma|$ is small). 
We mention here \cite{Br} and the references therein for many more examples on the impulsive control strategy. 
\subsection{First step}
\label{firststep}
We consider $\mathcal{S}_0\subset\Omega$ as before and consider $\delta>0$ so that $q_0 \in \mathcal{Q}_\delta$. We let $r_1 > 0$ be small enough so that  $B(q_0 , r_1) \subset \mathcal{Q}_\delta$.
We also let $T>0$. \par

The first step consists in considering the geodesics associated with the uncontrolled, potential case ($\gamma=0$).
The following classical result regarding the existence of geodesics can be found for instance in \cite[Section 7.5]{MR}, see also  \cite{Gai} for the continuity feature. 
\begin{lemma} \label{geo}
There exists  $r_2 $ in $(0,\frac12 r_1)$  such that for any $q_1 $ in $ \overline{B}(q_0 , r_2)$ there exists a unique $C^\infty$ solution $\bar{q}(t)$ lying in  $B(q_0 ,\frac12 r_1 )$ to 
\begin{align}\label{geod}
\begin{split}
\big( \mathcal{M}_g+\mathcal{M}_a(\bar{q}) \big)\bar{q}''+\langle\Gamma (\bar{q}),\bar{q}',\bar{q}'\rangle=0  \text{  on } [0,T], 
  \text{ with  } \bar{q}(0)= q_0,\ \bar{q}(T)={q}_1.
\end{split}
\end{align}
Furthermore the map
$q_1 \in \overline{B}(q_{0},r_{2}) \mapsto(c_0,c_1)\in\mathbb{R}^6$
given by $c_0=\bar{q}'(0),\ c_1= \bar{q}'(T)$ is continuous.
\end{lemma}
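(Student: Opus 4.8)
\textbf{Proof plan for Lemma~\ref{geo}.}
The statement is a two-point boundary value problem for the geodesic equation of the Riemannian metric $\mathcal{M}_g+\mathcal{M}_a(q)$ on the manifold $\mathcal{Q}$, together with a continuous dependence of the endpoint velocities on the target point $q_1$. The plan is to reduce this to the standard theory of the Riemannian exponential map. First I would recall that, as noted right after Theorem~\ref{reform}, in the case $\gamma=0$ equation \eqref{geod} is exactly the geodesic equation for the metric $g_q := \mathcal{M}_g+\mathcal{M}_a(q)$, which is smooth and positive-definite on $\mathcal{Q}$ (positive-definiteness because $\mathcal{M}_g$ is already positive-definite and $\mathcal{M}_a(q)$ is positive-semidefinite, being a Gram matrix, and smoothness by Remark~\ref{RemReg}). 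Thus the geodesic flow is a smooth flow on the tangent bundle $T\mathcal{Q}$, well-defined at least locally in time.

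Next I would invoke the local behaviour of the exponential map at the fixed point $q_0$: since $q_0$ is an interior point of $\mathcal{Q}_\delta$ (recall $B(q_0,r_1)\subset\mathcal{Q}_\delta$), there is a number $\rho>0$ and a time scale such that $\exp_{q_0}$ is a smooth diffeomorphism from a ball $\{v\in T_{q_0}\mathcal{Q} : |v|_{g_{q_0}} < \rho\}$ onto a geodesically convex normal neighbourhood $U$ of $q_0$ contained in $B(q_0,\tfrac12 r_1)$, with the property that the unique minimizing geodesic between any two points of $U$ stays in $B(q_0,\tfrac12 r_1)$. This is exactly the content of, e.g., \cite[Section 7.5]{MR} (existence of totally normal/convex neighbourhoods). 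I would then choose $r_2\in(0,\tfrac12 r_1)$ small enough (in the Euclidean norm) that $\overline{B}(q_0,r_2)\subset U$; shrinking $r_2$ further if necessary, uniform equivalence of the Euclidean and $g$-induced distances on the compact set $\overline{B}(q_0,\tfrac12 r_1)$ guarantees this is possible. For $q_1\in\overline{B}(q_0,r_2)$, the geodesic from $q_0$ to $q_1$ in $U$, reparametrized affinely to the interval $[0,T]$ (which only rescales $\bar q'$ by the constant factor coming from $T$ and does not affect \eqref{geod}, since the geodesic equation is invariant under affine reparametrization), yields the desired unique $C^\infty$ solution $\bar q$ staying in $B(q_0,\tfrac12 r_1)$.

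For the continuity statement, I would write $c_0 = \bar q'(0) = \tfrac1T (\exp_{q_0})^{-1}(q_1)$ (identifying $T_{q_0}\mathcal{Q}$ with $\mathbb{R}^3$), which depends continuously—indeed smoothly—on $q_1$ because $\exp_{q_0}$ is a diffeomorphism on $U$; and $c_1 = \bar q'(T) = \Pi^{T}(q_1, c_0)$, where $\Pi^t$ denotes the (smooth) geodesic flow on $T\mathcal{Q}$ evaluated at time $t$, composed with projection onto the velocity component. Since the geodesic flow depends smoothly on initial data and $T$ is fixed, $c_1$ is a continuous (smooth) function of $(q_0,c_0)$, hence of $q_1$. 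This gives the continuity of $q_1\mapsto(c_0,c_1)$ on $\overline{B}(q_0,r_2)$. The reference to \cite{Gai} can be cited here for the continuity feature in the form needed. I do not expect any serious obstacle: the only mild point of care is to keep all geodesics inside $B(q_0,\tfrac12 r_1)$, which is handled by taking $r_2$ small and using the convex normal neighbourhood, and to record that affine reparametrization to $[0,T]$ preserves \eqref{geod}.
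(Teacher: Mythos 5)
Your proposal is correct and follows exactly the route the paper intends: the paper gives no proof of Lemma~\ref{geo}, merely citing \cite[Section 7.5]{MR} for the existence/uniqueness of short geodesics and \cite{Gai} for the continuity feature, and your argument --- local invertibility of the Riemannian exponential map $\exp_{q_0}$ for the smooth positive-definite metric $\mathcal{M}_g+\mathcal{M}_a(q)$ on a totally normal (convex) neighbourhood, affine reparametrization to $[0,T]$, and smoothness of the geodesic flow for the continuity of $q_1\mapsto(c_0,c_1)$ --- is precisely the standard argument those references encapsulate. The only point to keep an eye on is that your convex-neighbourhood construction gives uniqueness of the geodesic \emph{within $U$}, whereas the lemma asserts uniqueness among geodesics staying in the larger ball $B(q_0,\frac12 r_1)$; this stronger form either requires taking $r_1$ smaller at the start of Section~\ref{firststep} so that $B(q_0,\frac12 r_1)$ is itself totally normal, or is simply more than what is used afterwards (only the well-definedness and continuity of the chosen $(c_0,c_1)$ enter the sequel).
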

Let us fix $r_2 $ as in the lemma before.
Let $q'_0 $ in $\overline{B}(0,r_2)$ and  $ ( {q}_1 , {q}'_1 ) $ in $ \overline{B}\big( (q_0 , q'_0), r_2  \big)$.
\bigbreak
Our goal is to make the system follow approximately such a geodesic $\overline{q}$ which we consider fixed during this Section. For the geodesic equation in \eqref{geod},
$q_{0}$ and $q_{1}$ determine the initial and final velocities (which of course differ in general from $q_{0}'$ and $q_{1}'$).
But we will see that is possible to use the penultimate term of  \eqref{renform} in order to modify the initial and final velocities of the system.
Precisely, the control will be used so that the right hand side of \eqref{renform} behaves like two Dirac masses at time close to $0$ and $T$, driving the velocity $q'$ from the initial and final velocities to the ones of the geodesic in two short time intervals close to $0$ and $T$. 
\subsection{Illustration of the method on a toy model}
\label{Illu}
Let us illustrate this strategy on a toy model. 
We will later on adapt the analysis to the complete model, cf. Proposition \ref{apr1}.

Let  $\beta:\mathbb{R}\to\mathbb{R}$ be a smooth, non-negative function supported in $[-1,1]$, such that  $\int_{-1}^{1}\beta(t)^2 \, dt=1$ and, for $\eps$ in $(0,1)$,  
$\beta_{\varepsilon}(t) :=\frac{1}{\sqrt{\varepsilon}}\beta\left(\frac{t-\varepsilon}{\varepsilon}\right) $, 
 so that\footnote{In the next lemma we are going to make use only of the square function $\beta_{\varepsilon}^2$ but we will also have to deal with the function $\beta_{\varepsilon}$ itself 
in the sequel, see below Proposition \ref{farcontr}.}   $( \beta_{\varepsilon}^2 )_\varepsilon $  is an approximation of the unity when $\varepsilon \to 0^{+}$.

For a function $f$ defined on $[ 0,T] $, we will denote 
\begin{equation}
  \label{NormBonnasse}
 \| f \|_{T,\varepsilon} :=  \| f \|_{ C^{0}( [ 0,T] )} +   \| f \|_{C^{1}( [ 2\varepsilon,T-2\varepsilon])} .
\end{equation}
\begin{lemma} \label{toy}
Let   $q_0 $,  $r_2 $, $q_1 $, $q'_0$ and  $q'_1$ as above.
Let 
\begin{equation}
  \label{defv}
v_0 := \big( \mathcal{M}_g+\mathcal{M}_a(q_0) \big) (c_0 ( q_1) - q'_0 )
  \text{ and } v_1 := - \big( \mathcal{M}_g+\mathcal{M}_a(q_1) \big) (c_1(q_1) - q'_1 ) .
\end{equation}
Let, for $\eps$ in $(0,1)$, 
$q_{\varepsilon} $ the maximal solution to the following Cauchy problem: 
\begin{equation} \label{RER}
\big( \mathcal{M}_g+\mathcal{M}_a(q_{\varepsilon}) \big) q_{\varepsilon}''+\langle\Gamma (q_{\varepsilon}),q_{\varepsilon}',q_{\varepsilon}'\rangle =   \beta_{\varepsilon}^2 (\cdot)\,  v_0  +  \beta_{\varepsilon}^2 (T-\cdot)  v_1,
\end{equation}
with  $ q_{\varepsilon}(0)= q_0$ and $ q'_{\varepsilon}(0)={q}'_0$.
Then for $\varepsilon$ small enough, 
 $q_{\varepsilon} (t)$ lies in $ B(q_0 , r_1) $ for $t$ in $[ 0,T] $
and, 
as $\varepsilon \rightarrow 0^+$,  $\| q_{\varepsilon} - \bar{q}\|_{T, \varepsilon}  \rightarrow  0$ and  $( q_{\varepsilon} , q'_{\varepsilon} )  (T) \rightarrow  ( q_1  ,q'_1)$. 
\end{lemma}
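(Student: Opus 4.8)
The plan is to integrate the ODE \eqref{RER} on the short ``boundary layers'' near $t=0$ and $t=T$ and show that the effect of the right-hand side there is, to leading order, an instantaneous jump of the velocity by $\big(\mathcal{M}_g+\mathcal{M}_a\big)^{-1}v_0$ (resp.\ $\big(\mathcal{M}_g+\mathcal{M}_a\big)^{-1}v_1$), while in between the solution is close to the geodesic $\bar q$. First I would fix notation: write $N(q):=\mathcal{M}_g+\mathcal{M}_a(q)$, which is smooth and uniformly positive-definite on $\overline{B}(q_0,r_1)$, so \eqref{RER} can be recast as a first-order system $q_\varepsilon'=p_\varepsilon$, $p_\varepsilon' = N(q_\varepsilon)^{-1}\big(-\langle\Gamma(q_\varepsilon),p_\varepsilon,p_\varepsilon\rangle + \beta_\varepsilon^2(\cdot)v_0 + \beta_\varepsilon^2(T-\cdot)v_1\big)$. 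Since $\beta_\varepsilon$ is supported in $[\varepsilon - \varepsilon,\varepsilon+\varepsilon]=[0,2\varepsilon]$, the two forcing bumps are supported in $[0,2\varepsilon]$ and $[T-2\varepsilon,T]$ respectively and are disjoint for $\varepsilon<T/4$.

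The key steps, in order. \emph{(i) Initial layer.} On $[0,2\varepsilon]$, estimate $q_\varepsilon(t)-q_0 = O(\varepsilon^{1/2})$ and $p_\varepsilon(t) = q'_0 + N(q_0)^{-1}v_0\int_0^t\beta_\varepsilon^2 + O(\sqrt\varepsilon)$ by a Gronwall/bootstrap argument: a priori $\|p_\varepsilon\|_{L^\infty[0,2\varepsilon]}$ is bounded (the quadratic term contributes $O(\varepsilon)$ after integration once $p_\varepsilon$ is known bounded, and $\int\beta_\varepsilon^2 = O(1)$, while $\int|\beta_\varepsilon^2|\cdot$ slow variation of $N^{-1}$ is $O(\sqrt\varepsilon)$ since $N(q_\varepsilon)^{-1}-N(q_0)^{-1}=O(\sqrt\varepsilon)$ on this interval). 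Hence at $t=2\varepsilon$, $q_\varepsilon(2\varepsilon)=q_0+O(\sqrt\varepsilon)$ and $p_\varepsilon(2\varepsilon) = q'_0 + N(q_0)^{-1}v_0 + o(1) = c_0(q_1)+o(1)$ by the very definition \eqref{defv} of $v_0$. \emph{(ii) Interior.} On $[2\varepsilon,T-2\varepsilon]$ the forcing vanishes, so $(q_\varepsilon,p_\varepsilon)$ solves the geodesic system with data $o(1)$-close to $(q_0,c_0(q_1))=(\bar q(0),\bar q'(0))$; by continuous dependence on initial conditions for ODEs (and the fact that $\bar q$ stays in the compact $\overline{B}(q_0,\tfrac12 r_1)$ where the vector field is smooth), $(q_\varepsilon,p_\varepsilon)\to(\bar q,\bar q')$ uniformly in $C^1$ on $[2\varepsilon,T-2\varepsilon]$, which together with the $O(\sqrt\varepsilon)$ control on the layers gives $\|q_\varepsilon-\bar q\|_{T,\varepsilon}\to 0$. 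In particular $q_\varepsilon$ stays in $B(q_0,r_1)$ for $\varepsilon$ small, which also justifies a posteriori that the maximal solution extends to all of $[0,T]$. \emph{(iii) Final layer.} Repeat step (i) on $[T-2\varepsilon,T]$, starting from $p_\varepsilon(T-2\varepsilon)=\bar q'(T)+o(1)=c_1(q_1)+o(1)$: the bump $\beta_\varepsilon^2(T-\cdot)v_1$ adds $N(q_1)^{-1}v_1 + o(1)$ to the velocity, and $q_\varepsilon$ moves only by $O(\sqrt\varepsilon)$, so $q_\varepsilon(T)=\bar q(T)+o(1)=q_1+o(1)$ and $p_\varepsilon(T)=c_1(q_1)+N(q_1)^{-1}v_1+o(1)=q'_1+o(1)$ by \eqref{defv}. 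This gives $(q_\varepsilon,q_\varepsilon')(T)\to(q_1,q'_1)$.

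I expect the main obstacle to be the bootstrap in the layer estimates: one must close the a priori bound on $\|p_\varepsilon\|_{L^\infty}$ over $[0,2\varepsilon]$ \emph{before} knowing the solution stays in the good region, and one must be careful that the quadratic term $\langle\Gamma,p_\varepsilon,p_\varepsilon\rangle$ — whose $L^\infty$ norm is large if $p_\varepsilon$ is large — integrates to something small only because the interval has length $2\varepsilon$; the circular dependence (bound on $p_\varepsilon$ needs smallness of the quadratic contribution, which needs a bound on $p_\varepsilon$) is resolved by a standard continuity/maximal-interval argument on the set of times where $\|p_\varepsilon\|\le 2(\|q'_0\|+\|N(q_0)^{-1}v_0\|)$, say. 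The secondary point requiring care is that $\|q_\varepsilon-\bar q\|_{T,\varepsilon}$ only asks for $C^0$ control globally and $C^1$ control away from the layers, which is exactly what the above yields — no $C^1$ bound is claimed on $[0,2\varepsilon]\cup[T-2\varepsilon,T]$ (indeed $q_\varepsilon'$ there is $O(1)$ but not close to $\bar q'$), so the norm \eqref{NormBonnasse} is tailored to make the statement true. All other steps are routine ODE continuous-dependence arguments.
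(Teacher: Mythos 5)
Your proof follows essentially the same three-phase strategy as the paper: an a priori bound on $q_\varepsilon'$ obtained by a bootstrap on the short interval where the forcing is active, an analysis of the initial layer $[0,2\varepsilon]$ showing $q_\varepsilon(2\varepsilon)\to q_0$ and $q_\varepsilon'(2\varepsilon)\to c_0(q_1)$ (using precisely the cancellation $q_0'+N(q_0)^{-1}v_0=c_0(q_1)$ built into \eqref{defv}), continuous dependence for the geodesic flow on $[2\varepsilon,T-2\varepsilon]$ (the paper's Lemma~\ref{depco}, plus a time shift which you leave implicit but which is harmless since $\bar q$ is smooth), and a symmetric analysis on $[T-2\varepsilon,T]$. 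The only cosmetic deviations are that the paper obtains the uniform velocity bound via the energy identity \eqref{identityNRJ} rather than a direct Gronwall bootstrap, and integrates $(Nq_\varepsilon')'$ rather than $q_\varepsilon''$ (generating an extra $D\mathcal{M}_a$ commutator term that also vanishes); your quoted remainder $O(\sqrt\varepsilon)$ is a slight overestimate — once $\|p_\varepsilon\|_{L^\infty[0,2\varepsilon]}$ is bounded one in fact gets $q_\varepsilon-q_0=O(\varepsilon)$ — but this only strengthens the conclusion.
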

\begin{proof}
For  $\eps$ in $(0,1)$,  let us denote $T_{\varepsilon} = \sup\,  \{  \hat{T} > 0 \text{ such that }  \,  q_{\varepsilon} (t) \in B(q_0 , r_1)   \text{ for } t \in (0,\hat{T}) \}$.  
Let us first prove that there exists $\tilde{T} > 0$ such that for any $\eps$ in $(0,1)$,  $T_{\varepsilon} \geq \tilde{T}$. 
Using the identity  \eqref{identityNRJ}, we obtain indeed, for any $\eps$ in $(0,1)$, for any $ t \in (0,T_{\varepsilon} )$, 
\begin{equation*}
 \Big(\mathcal{M}_g+\mathcal{M}_a(q_{\varepsilon} (t))\Big) q_{\varepsilon}' (t) \cdot q_{\varepsilon}'  (t)  = 
 \Big(\mathcal{M}_g+\mathcal{M}_a( q_0)\Big) {q}'_0 \cdot  {q}'_0 
 +  2 \int_0^{t} \big( \beta_{\varepsilon}^2 (\cdot)\,  v_0  +  \beta_{\varepsilon}^2 (T-\cdot)  v_1 \big) \cdot q_{\varepsilon}'   ,
\end{equation*}
Moreover, relying on Remark~\ref{RemReg}, we see that there exists $c > 0$ (which depends on $\delta$) such that for any $q$ in $\mathcal{Q}_\delta$, for any $p$ in $\R^3$, 
\begin{equation} \label{identityNRJeps}
c  |p |^2 \leq  \Big(\mathcal{M}_g+\mathcal{M}_a(q)\Big) p \cdot p \leq c^{-1}  |p |^2 .
\end{equation}
Therefore using Gronwall's lemma we obtain that there exists $C>0$ such that for any $\eps$ in $(0,1)$, for any $ t \in (0,T_{\varepsilon} )$, 
$ \sup_{t \in (0,T_{\varepsilon} )  }\,  \| q_{\varepsilon}' (t) \| \leq C .$ Therefore by the mean value theorem 
 for  $\tilde{T} := r_1 / 2C$, one has  for any $\eps$ in $(0,1)$,  $T_{\varepsilon} \geq \tilde{T}$. 

We now prove in the same time that for $\varepsilon>0$ small enough,  $T_{\varepsilon} \geq T$,  and the convergence results stated in 
Lemma \ref{toy}. 
In order to exploit the supports of the functions  $\beta_{\varepsilon} (\cdot)$ and $ \beta_{\varepsilon} (T-\cdot)$ in the right hand side of  the equation \eqref{RER} 
we compare the dynamics of $q_{\varepsilon}$ and $\bar{q}$  during the three time intervals 
 $[0, 2\varepsilon]$,  $[ 2\varepsilon,T-2\varepsilon]$ and $[ T-2\varepsilon , T]$. 

For  $ \eps_1 := \tilde{T} / 2 $ and $\eps$ in $(0,\eps_1)$,  one already has that  $T_{\varepsilon} \geq  2 \varepsilon$ and we can therefore simply compare the dynamics of 
$q_{\varepsilon}$ and $\bar{q}$ on the first interval $[0, 2\varepsilon]$.
Indeed using again  the mean value theorem we obtain that   $\sup_{t \in [0, 2\varepsilon]} \,  | q_{\varepsilon} - q_0  | $ converges to $0$ as $\varepsilon$ goes to $0$.
Moreover integrating the equation \eqref{RER} on $[0, 2\varepsilon]$ and taking into  account the choice of $v_0$ in \eqref{defv},
we obtain 
\begin{multline} \label{Grrr}
 \Big(\mathcal{M}_g+\mathcal{M}_a( q_{\varepsilon} (2\varepsilon))\Big)  q'_{\varepsilon}(2\varepsilon) 
=  \Big( \mathcal{M}_g+\mathcal{M}_a( q_0)\Big)  c_0 ( q_1) 
\\ 
-  \int_0^{2\varepsilon} \Big( D\mathcal{M}_a( q_{\varepsilon}) \cdot   q'_{\varepsilon}  \Big) \cdot q'_{\varepsilon} \, dt
- \int_0^{2\varepsilon} \langle\Gamma (q_{\varepsilon}),q'_{\varepsilon},q'_{\varepsilon}\rangle \, dt ,
\end{multline}
Now, there exists $C> 0$ such that for any $q$ in $\mathcal{Q}_\delta$, for any $p$ in $\R^3$, 
\begin{equation}
|  \big( D\mathcal{M}_a( q) \cdot   p \big) \cdot p | + | \langle\Gamma (q),p,p\rangle | \leq  C  |p |^2 .
\end{equation}
Combining this and the bound on $q_{\varepsilon}'$ we see that  the two  terms of the last line of \eqref{Grrr} above converge to $0$ as $\varepsilon$ goes to $0$.
  Since $q\mapsto \mathcal{M}_a( q)$ is continuous on  $\mathcal{Q}_\delta$ and $ q_\varepsilon (2\varepsilon) $ converges to $q_0$ as $ \varepsilon \rightarrow 0$, 
the matrix  $\mathcal{M}_a( q_{\varepsilon} )$ converges to $\mathcal{M}_a ( q_0) $  as $ \varepsilon \rightarrow 0$. 
Therefore, using that the matrix $ \mathcal{M}_g+\mathcal{M}_a( q_0)$ is invertible 
we deduce that $ q_{\varepsilon}'(2\varepsilon) $  converges to $c_0 (q_1)$ as $\varepsilon$ goes to $0$.

During the time interval $[ 2\varepsilon,T-2\varepsilon]$, the right hand side of the equation \eqref{RER} vanishes 
and the equation therefore reduces to the geodesic equation in \eqref{geod}. 
Since this equation is invariant by translation in time, one may use the following elementary result on the continuous dependence on the data, with a time shift of $2\varepsilon$.
\begin{lemma} \label{depco}
There exists $\eta >0 $  such that for any $(\tilde{q}_0, \tilde{q}'_0) $ in $B((q_0 , c_0 (q_1)) , \eta)$ there exists a unique $C^\infty$ solution $\tilde{q}(t)$ lying in  $B(q_0 , r_1 )$ to 
$\big( \mathcal{M}_g+\mathcal{M}_a(\tilde{q}) \big)\tilde{q}''+\langle\Gamma (\tilde{q}),\tilde{q}',\tilde{q}'\rangle=0$   on $ [0,T]$, 
with  $ \tilde{q}(0)= \tilde{q}_0 ,\ \tilde{q}' (0)= \tilde{q}'_0  $.
Furthermore  $\|  \tilde{q} - \overline{q}   \|_{ C^{1}( [ 0,T] )}  \rightarrow 0$ as $(\tilde{q}_0, \tilde{q}'_0) \rightarrow (q_0 , c_0 (q_1) )$.
\end{lemma}
Since $q_{\varepsilon} (2 \varepsilon)$ and $q_{\varepsilon}' (2 \varepsilon)$ respectively converge to $q_0$ and $ c_0 (q_1)$, according to Lemma~\ref{depco}
 there exists $\eps_2 $  in $(0,\eps_1)$ such that 
for  $\eps$ in $(0,\eps_2)$, there exists a unique $C^\infty$ solution $\tilde{q}_\varepsilon (t)$  
lying in  $B(q_0 , r_1 )$ to 
$\big( \mathcal{M}_g+\mathcal{M}_a(\tilde{q}_\varepsilon) \big)\tilde{q}_\varepsilon''+\langle\Gamma (\tilde{q}_\varepsilon),\tilde{q}_\varepsilon',\tilde{q}_\varepsilon'\rangle=0$   on $ [0,T]$, 
with  $ \tilde{q}_\varepsilon(0)= q_{\varepsilon} (2 \varepsilon)  ,\ \tilde{q}_\varepsilon' (0)= q_{\varepsilon}' (2 \varepsilon)  $ and
$\|  \tilde{q}_\varepsilon - \overline{q}   \|_{ C^{1}( [ 0,T] )}  \rightarrow 0$ as $ \varepsilon \rightarrow  0$.

Since the function defined by $  \hat q_{\varepsilon} (t) = q_{\varepsilon} (t + 2\varepsilon)$ also satisfies  
$\big( \mathcal{M}_g+\mathcal{M}_a(\hat q_\varepsilon) \big)\hat q_\varepsilon''+\langle\Gamma (\hat q_\varepsilon),\hat q_\varepsilon',\hat q_\varepsilon'\rangle=0$   on $ [0,T-4\varepsilon]$, 
with  $ \hat q_\varepsilon(0)= q_{\varepsilon} (2 \varepsilon)  ,\ \hat q_\varepsilon' (0)= q_{\varepsilon}' (2 \varepsilon)  $, by the uniqueness part in the Cauchy-Lipschitz theorem  one has that
$T_{\varepsilon} \geq T-  2 \varepsilon$ and 
 $\hat q_{\varepsilon}$ and $\tilde{q}_\varepsilon $
coincide on $ [0,T-4\varepsilon]$, so that, shifting back in time, 
$\| q_\varepsilon - \overline{q} (\cdot - 2\varepsilon)  \|_{ C^{1}( [  2\varepsilon,T-2\varepsilon ])}  \rightarrow 0$ as $ \varepsilon \rightarrow  0$.
Since $\overline{q}$ is smooth, this entails that $\| q_\varepsilon  - \overline{q}   \|_{ C^{1}(  [  2\varepsilon,T-2\varepsilon ])}  \rightarrow 0$ as $ \varepsilon \rightarrow  0$.

Finally one deals with the time interval $[ T-2\varepsilon , T]$ in the same way as the first step.
In particular, reducing $\varepsilon$ one more time if necessary one obtains, by an energy estimate, a Gronwall estimate and the mean value theorem, that $T_{\varepsilon} \geq T$.
Moreover
the choice of the vector $v_1$ in \eqref{defv} allows to reorientate the velocity $ q_{\varepsilon}'$ from $c_1 (q_1)$ to $q'_1$ whereas the position is not much changed (due to the uniform bound of $q'_{\varepsilon}$ and the mean value theorem) so that the value of $q_{\varepsilon} $ at time $T$ converges to $ q_1$ as $\varepsilon$ goes to $0$.
\end{proof}
\subsection{Back to the complete model}
  \label{REM}
Now in order to mimic the right  hand side of \eqref{RER}   we are going to use one part of the force term $F_1$  introduced in Definition \ref{def-forces}. 
Let us therefore  introduce some notations for the different contributions of the force term $F_1$. 
We define, for any $q$ in $\mathcal{Q}$, $p$ in $\mathbb{R}^3$, $\alpha $ in $C^{\infty}(\overline{\mathcal{F}(q)}; \mathbb R)$, 
\begin{align} 
\label{f1a} F_{1,a} (q) [ \alpha]  &:= -  \frac12 \int_{ \partial \mathcal{S} (q)}  |\nabla\alpha|^{2}   \, \partial_n \Phi (q,\cdot) \, d \sigma , 
\\   \label{f1b} F_{1,b} (q,p) [ \alpha]   &:= - \int_{ \partial \mathcal{S} (q)}  \nabla\alpha \cdot \nabla (p\cdot \Phi(q,\cdot))    \, \partial_n \Phi (q,\cdot) \, d \sigma , 
\\ \label{f1c}  F_{1,c}  (q) [\alpha]  &:=  -  \int_{ \partial \mathcal{S} (q)}  \nabla\alpha \cdot   \nabla^{\perp}\psi(q,\cdot) \, \partial_n \Phi(q,\cdot) \, d \sigma ,
\end{align}
so that for any $\gamma$ in $\mathbb R$, 
$$
F_1  (q,p, \gamma) [ \alpha]  = F_{1,a} (q) [ \alpha] +  F_{1,b} (q,p) [ \alpha]    + \gamma F_{1,c}  (q) [\alpha] .
$$
The part which will allow us to approximate the right hand side of  \eqref{RER}  is $F_{1,a} $. 
More precisely we are going to see (cf. Proposition \ref{apmain}) that there exists a control $\alpha$ (chosen below as $ \alpha = \mathcal  A [q ,g_{\varepsilon} ]$ with   $g_{\varepsilon}$  given by  \eqref{gcontrol})
such that in the appropriate regime the dynamics of \eqref{renform} behaves like the equation with  only $F_{1,a} $ on the right hand side.
Moreover the following lemma, where the time parameter does not appear,  proves that the operator $F_{1,a}  (q) [ \cdot ]  $ can actually attain any value  $v$ in $\mathbb R^3$. %, up to an arbitrarily small error.
Recall that $\delta >0$ has been fixed at the beginning of Section \ref{firststep}.
\begin{proposition} \label{farcontr}
There exists a continuous mapping $\overline{g}:\mathcal{Q}_\delta \times \mathbb R^3 \rightarrow \mathcal C$ 
such that for any $(q,v) $ in $\mathcal{Q}_\delta \times \mathbb R^3$ 
the function $\overline{\alpha} := \mathcal A [q,\overline{g} (q,v)]$ in $C^\infty (\overline{\mathcal{F}(q)};\mathbb{R})$  satisfies:
\begin{gather}
\label{harmR}
\Delta \overline{\alpha}=0   \text{ in } \mathcal{F}(q),
\text{ and } \partial_{n} \overline{\alpha} = 0  \text{ on } \partial \mathcal{F}(q) \setminus \Sigma, \\
\label{hszR}
\int_{ \partial \mathcal{S} (q)}  |  \nabla\overline{\alpha}|^{2} \, \partial_n \Phi(q,\cdot) \, d \sigma = v , \\
\label{csR}
\int_{\partial\mathcal{S}(q)} \overline{\alpha} \, \partial_n \Phi(q,\cdot) \, d\sigma = 0 .
\end{gather}
\end{proposition}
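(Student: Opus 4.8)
\textbf{Proof strategy for Proposition \ref{farcontr}.}
The plan is to reduce the problem to one of complex analysis, exploiting the conformal structure of the two-dimensional domain. First I would fix $q \in \mathcal{Q}_\delta$ and recall that $\overline{\alpha}$ must be harmonic in $\mathcal{F}(q)$ with $\partial_n \overline{\alpha} = 0$ on $\partial \mathcal{S}(q)$, so only the trace and tangential derivative of $\overline{\alpha}$ on $\partial \mathcal{S}(q)$ enter formulas \eqref{f1a} and \eqref{ef3}. Since on $\partial \mathcal{S}(q)$ we have $\partial_n \overline{\alpha}=0$, one has $\nabla \overline{\alpha} = (\partial_\tau \overline{\alpha})\,\tau$ there, so $|\nabla\overline{\alpha}|^2 = (\partial_\tau \overline{\alpha})^2$ on $\partial \mathcal{S}(q)$. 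The idea is to work with the holomorphic function $w = \partial_1 \overline{\alpha} - i\,\partial_2 \overline{\alpha}$ (or, equivalently, with a holomorphic primitive), whose imaginary part is the stream function conjugate to $\overline{\alpha}$; the condition $\partial_n \overline{\alpha}=0$ on $\partial \mathcal{S}(q)$ means exactly that this stream function is locally constant on $\partial \mathcal{S}(q)$, hence (since $\mathcal{S}(q)$ is connected) globally constant there.

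Next I would parametrize. Using a conformal map (or just a smooth parametrization of $\partial\mathcal{S}(q)$ together with the harmonic conjugate), the data $g\mathbbm{1}_\Sigma$ on $\partial\Omega$ propagate through the Neumann problem \eqref{pot} to produce a boundary function $\partial_\tau \overline{\alpha}$ on $\partial\mathcal{S}(q)$; conversely, by linearity and the density of smooth functions, one should be able to realize on $\partial\mathcal{S}(q)$ essentially any mean-zero tangential-derivative profile (equivalently, any trace modulo constants), by a suitable choice of $g \in \mathcal{C}$. This is where a Runge-type approximation argument enters: one first constructs an explicit \emph{auxiliary} harmonic function (e.g.\ built from point sources or a polynomial in the appropriate coordinate) which is defined and harmonic in a neighborhood of $\overline{\mathcal{S}(q)}$ within $\Omega$ and which achieves the prescribed values of the three integrals, and then approximates it uniformly (together with first derivatives on $\partial\mathcal{S}(q)$) by a harmonic function whose normal derivative on $\partial\Omega$ is supported in $\Sigma$ with zero flux — this approximant is $\mathcal{A}[q,\overline{g}]$ for the corresponding $\overline{g}\in\mathcal{C}$. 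The three scalar conditions \eqref{harmR}, \eqref{hszR}, \eqref{csR} amount to: harmonicity plus the correct Neumann support (automatic by construction of $\mathcal{A}$), one $\mathbb{R}^3$-valued quadratic constraint \eqref{hszR}, and one $\mathbb{R}^3$-valued linear constraint \eqref{csR} (orthogonality to the Kirchhoff normal traces).

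The heart of the matter is \textbf{surjectivity of the quadratic map} $\overline{\alpha}\mapsto \int_{\partial\mathcal{S}(q)} (\partial_\tau\overline{\alpha})^2\,\partial_n\Phi(q,\cdot)\,d\sigma$ onto $\mathbb{R}^3$, subject to the linear constraint \eqref{csR}. I would handle this by picking a two-parameter (or three-parameter) family of explicit harmonic functions $\alpha_j$ with $\partial_n\alpha_j=0$ on $\partial\mathcal{S}(q)$ — for instance the real parts of $z\mapsto z^{-k}$ in a conformal coordinate centered inside $\mathcal{S}(q)$, suitably combined so as to kill the linear term \eqref{csR} — and then examine the values of the quadratic form on their linear combinations. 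The key point is that \emph{because $\mathcal{S}_0$ is not a disk}, the relevant quadratic form is nondegenerate in enough directions: on a disk the vectors $\int_{\partial\mathcal{S}}(\text{stuff})\,\partial_n\Phi$ would be constrained by symmetry (indeed $\partial_n\Phi_3 = (x-h)^\perp\cdot n = 0$ on a disk, which is exactly the degeneracy flagged after Theorem \ref{dmain}), whereas the non-disk hypothesis guarantees the third component is genuinely reachable. Concretely, one shows the image of the quadratic map contains an open cone, and then — using that $v$ and $\lambda v$ are both attained by scaling $\overline{\alpha}\mapsto \sqrt{\lambda}\,\overline{\alpha}$, and that sign changes and a sufficiently rich combination of basis functions fill out all directions — that it is all of $\mathbb{R}^3$.

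Finally, \textbf{continuity in $(q,v)$}. Once the finite-dimensional construction above is set up with basis functions depending smoothly (or continuously) on $q\in\mathcal{Q}_\delta$ — which holds by the elliptic regularity / domain-dependence facts recalled in Remark \ref{RemReg}, since $\partial\mathcal{S}(q)$ moves smoothly with $q$ and stays at distance $>\delta$ from $\partial\Omega$ — the coefficients solving the (quadratic in $v$, to get the $\sqrt{\phantom{v}}$ scaling, but otherwise algebraic) system can be chosen continuously, and the Runge approximation can be performed with a modulus depending continuously on $q$. Then $\overline{g}(q,v)$ is obtained by composing these continuous maps, and $\mathcal{A}[q,\overline{g}(q,v)]$ depends continuously on $(q,v)$ by continuity of the solution operator of the Neumann problem \eqref{pot}. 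I expect the main obstacle to be precisely the surjectivity of the quadratic map: making the ``not a disk'' hypothesis do its work requires a careful, explicit choice of competitor harmonic functions and a genuine argument (rather than a soft one) that the third — angular — component of $\int_{\partial\mathcal{S}(q)}|\nabla\overline{\alpha}|^2\,\partial_n\Phi$ can be made nonzero and of either sign while the other two components are controlled; everything else (the linear constraint, the Runge step, the continuity) is standard fluid-structure technology.
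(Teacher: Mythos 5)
Your overall architecture is aligned with the paper's: conformal parametrization of $\partial\mathcal{S}(q)$ via the Riemann mapping theorem, construction of localized ``brick'' harmonic functions near $\partial\mathcal{S}(q)$, a Runge-type argument to upgrade these to harmonic functions on all of $\mathcal{F}(q)$ with Neumann data supported in $\Sigma$ (this is exactly the paper's Lemma \ref{LemmeEta}, borrowed from \cite{OG-Addendum}), a separate linear-dependence trick to enforce \eqref{csR} (paper's Lemma \ref{Lem3emeCond}), the $\sqrt{\lambda}$-homogeneity to reduce to small $v$ followed by an inverse-function-theorem argument (paper's proof of Proposition \ref{dfarcontr}), and continuity in $q$ via elliptic domain-dependence plus a compactness/partition-of-unity argument (paper's Lemma \ref{Lem10}). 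You are also right that the disk case degenerates because $\partial_n\Phi_3 = (x-h)^\perp\cdot n\equiv 0$.

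However, there is a genuine gap at what you yourself flag as ``the heart of the matter.'' Because $|\nabla\overline{\alpha}|^2$ is a nonnegative density, the reachable set of the map $\overline{\alpha}\mapsto \int_{\partial\mathcal{S}(q)}|\nabla\overline{\alpha}|^2\,\partial_n\Phi\,d\sigma$ (using bricks with \emph{disjoint} supports, so that cross terms vanish) is essentially the \emph{conical hull} $\text{coni}\{\partial_n\Phi(q,x):x\in\partial\mathcal{S}(q)\}$, not its linear span. ``Sign changes'' of coefficients do not help, since $|\nabla(-\overline{\alpha})|^2=|\nabla\overline{\alpha}|^2$; the only way to reach a given direction is to concentrate the density near a boundary point $x$ where $\partial_n\Phi(q,x)$ points that way. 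The decisive fact the paper proves — and that your proposal does not supply — is Lemma \ref{geocond}: if $\mathcal{S}_0$ is not a disk, then $\text{coni}\{(n(x),(x-h_0)^\perp\cdot n(x)):x\in\partial\mathcal{S}_0\}=\mathbb{R}^3$. Its proof is a short separating-hyperplane argument combined with Green's formula, showing that if the conical hull were contained in a half-space then $(w+x^\perp)\cdot n\equiv 0$ on $\partial\mathcal{S}_0$, forcing $\mathcal{S}_0$ to be a disk. Combined with Carath\'eodory's theorem this yields a finite set of points $x_i\in\partial\mathcal{S}_0$ and positive continuous weights $\mu_i(v)$ with $\sum_i\mu_i(v)\,\partial_n\Phi(q_0,x_i)=v$ (paper's Lemma \ref{kup2}), and it is these $\mu_i$ that become the $\sqrt{\mu_i}$-coefficients of your bricks. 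Your intuition that $(x-h)^\perp\cdot n$ changes sign along $\partial\mathcal{S}_0$ is exactly the right raw material, but you need the full conical-hull statement and its Green's-formula proof to close the surjectivity argument rather than an appeal to ``nondegeneracy'' or to cross terms; the rest of your proposal would then go through along the lines of the paper.
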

We recall that the operator $\mathcal A$ was introduced in Definition~\ref{defA}. The result above will be proved in Section~\ref{S4}.
Note that when $\mathcal{S}(q)$ is a homogeneous disk, an adapted version of Proposition~\ref{farcontr} still holds, see Proposition~\ref{dfarcontr} in Section \ref{S4}.
The condition \eqref{csR} will be useful to cancel out the last term of  \eqref{renform}.
\bigbreak

We define 
\begin{equation} \label{gcontrol}
g_{\varepsilon} (t,x) :=  \beta_{\varepsilon}(t) \overline{g}(q_0, -2 v_0) (x)  +  \beta_{\varepsilon} (T-t) \overline{g}(q_1, -2 v_1)(x) ,
\end{equation}
where $v_0 $ and $v_1$ defined in \eqref{defv}, for $ ( {q}_1 , {q}'_1 ) $  in $ \overline{B}\big( (q_0 , q'_0), r_2  \big)$, and $\overline{g}$ is given by Proposition~\ref{farcontr}.
The goal is to prove that for $\varepsilon$ and $|\gamma|$ small enough, this control drives the system \eqref{renform}  with $ \alpha = \mathcal  A [q ,g_{\varepsilon} ]$ 
 from $(q_{0},q'_{0})$ to $(q_{1},q'_{1})$, approximately.
 
\ \par
\noindent
{\bf 1.} We first observe that % $g_{\varepsilon}$ is in  $C^{\infty}_0 ([0,T]; \mathcal C)$ and depends on  $q_1  $ and $q'_1$  (through $v_0 $ and $v_1$). Thus 
\begin{multline}  \label{vendredi}
F_{1,a} (q) [\mathcal  A [q ,g_{\varepsilon} ]] = 
\beta_{\varepsilon}^2 (t) F_{1,a} (q) \Big[ \mathcal  A [q ,\overline{g} (q_0, -2v_0)  ] \Big] \\ 
+ \beta_{\varepsilon}^2 (T-t) F_{1,a} (q) \Big[ \mathcal  A [q ,\overline{g} (q_1,-2v_1)   ] \Big] ,
\end{multline}
and is therefore a good candidate to approximate the right  hand side of \eqref{RER} if $q$ is near $q_0$ for $t$ near $0$ 
and if $q$ is near $q_1$ for $t$ near $T$. One then may indeed expect that
\begin{multline*}
F_{1,a} (q) \Big[ \mathcal  A [q ,\overline{g} (q_0, -2v_0)  ] \Big]
\text{ and }
F_{1,a} (q) \Big[ \mathcal  A [q ,\overline{g} (q_1,-2v_1)   ] \Big]
\text{  are close to  } \\
F_{1,a} (q_0) \Big[ \mathcal  A [q_0 ,\overline{g} (q_0, -2v_0)  ] \Big]
\text{ and }
F_{1,a} (q_1) \Big[ \mathcal  A [q_1 ,\overline{g} (q_1,-2v_1)   ]  \Big] ,
\text{ respectively,}
\end{multline*}
 on the respective supports of $\beta_{\varepsilon} (\cdot)$ and $\beta_{\varepsilon} (T-\cdot)$.
Moreover, according to Proposition~\ref{farcontr} these last two terms are equal to $v_0$ and $v_1$ (see \eqref{f1a} and \eqref{hszR}). \par
\ \par
\noindent
{\bf 2.} Next we will rigorously prove in Proposition \ref{apr1} below that 
the conclusion of Lemma~\ref{toy} for the toy system also holds when one substitutes the term $F_{1,a} (q) [\mathcal  A [q ,g_{\varepsilon} ]]$ in \eqref{vendredi}. This corresponds also to \eqref{renform} with $\gamma=0$ and the term $F_{1,b}$ and $F_{2}$ put to zero. \par
%, to the right hand side of \eqref{RER} in the limit where  $\varepsilon$ and $\nu$ go to $0$. 
%
\ \par
\noindent
{\bf 3.} Finally it will appear that in an appropriate regime, in particular for small $\varepsilon$ and $|\gamma|$, 
the second last  term of  \eqref{renform}  is dominant with respect to the other terms of the right hand side (here the condition 
\eqref{csR} above will be essential in order to deal with the last term of \eqref{renform}). \par
\ \par
Let us state a proposition summarizing the claims above. 
According to the Cauchy-Lipschitz theorem there exists a controlled solution $q_{\varepsilon,\gamma}$ associated
with the control $g_{\varepsilon}$ introduced in \eqref{gcontrol}, starting with the initial condition $q_{\varepsilon,\gamma}(0) = q_0$ and $q_{\varepsilon,\gamma}'(0) = q'_0$, with circulation $\gamma$, and lying in   $B(q_0 , r_1) $ up to some positive time $T_{\varepsilon,\gamma}$.
More explicitly $q_{\varepsilon,\gamma}$ satisfies on  $[0,T_{\varepsilon,\gamma}  ] $,
\begin{multline} \label{renformEPS}
\big( \mathcal{M}_g + \mathcal{M}_a(q_{\varepsilon,\gamma}) \big) q_{\varepsilon,\gamma}''
+ \langle \Gamma(q_{\varepsilon,\gamma}),q_{\varepsilon,\gamma}',q_{\varepsilon,\gamma}'\rangle
= \gamma^2 E(q_{\varepsilon,\gamma}) + \gamma q_{\varepsilon,\gamma}'\times B(q_{\varepsilon,\gamma})  \\
+ F_1 (q_{\varepsilon,\gamma},q_{\varepsilon,\gamma}', \gamma) \big[   \mathcal A[q_{\varepsilon,\gamma} ,g_{\varepsilon} ]  \big]
+ F_2 (q_{\varepsilon,\gamma}) \big[\partial_t  \mathcal A[q_{\varepsilon,\gamma} ,g_{\varepsilon} ]  \big] .
\end{multline}
 Observe that due to the choice of the control $g_{\varepsilon}$ in 
 \eqref{gcontrol} the function $q_{\varepsilon,\gamma}$ also depends on 
  $ ( {q}_1 , {q}'_1 ) $  through 
  $v_0 $ and $v_1$, see their definition in \eqref{defv}.
% Indeed it will matter to pay attention to the dependence of various quantities
% with respect to $q_1  ,q'_1 , \varepsilon $ and $\gamma$.

We have the following approximation result.
\begin{proposition} \label{apmain}
For $\varepsilon$ and $\vert \gamma \vert$  small enough,  $T_{\varepsilon,\gamma} \geqslant T$ and, 
as $\varepsilon $ and $\vert \gamma \vert$  converge to $0^+$, 
 $\| q_{\varepsilon,\gamma}-\bar{q}\|_{T, \varepsilon} \rightarrow  0$ and 
 $( q_{\varepsilon,\gamma} , q'_{\varepsilon,\gamma} )  (T) \rightarrow  ( q_1  ,q'_1)$, uniformly for  $ ( {q}_1 , {q}'_1 ) $ in $ \overline{B}\big( (q_0 , q'_0), r_2  \big)$.
\end{proposition}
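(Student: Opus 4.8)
The plan is to prove Proposition~\ref{apmain} by adapting the three-interval comparison carried out in the proof of Lemma~\ref{toy}, treating the discrepancy between the true controlled dynamics \eqref{renformEPS} and the toy dynamics \eqref{RER} as a perturbation that is small in the regime $\varepsilon,|\gamma|\to 0^+$. The key identity is \eqref{vendredi}: up to the localization errors coming from the fact that $q_{\varepsilon,\gamma}$ is only \emph{near} $q_0$ (resp. $q_1$) on the support of $\beta_\varepsilon(\cdot)$ (resp. $\beta_\varepsilon(T-\cdot)$), the dominant force term $F_{1,a}(q_{\varepsilon,\gamma})[\mathcal A[q_{\varepsilon,\gamma},g_\varepsilon]]$ coincides with the right-hand side $\beta_\varepsilon^2(\cdot)v_0+\beta_\varepsilon^2(T-\cdot)v_1$ of \eqref{RER}, by Proposition~\ref{farcontr} (namely \eqref{f1a}--\eqref{hszR}) together with the scaling $\overline g(q_0,-2v_0)$, $\overline g(q_1,-2v_1)$ and the quadratic dependence of $F_{1,a}$ and of $\mathcal A$ on $g$. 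I would first state and prove an intermediate Proposition~\ref{apr1} (as the text announces) handling the case $\gamma=0$ with $F_{1,b}$ and $F_2$ dropped, i.e. exactly \eqref{RER} with the frozen-coefficient force $F_{1,a}(q_0)[\cdot]$ replaced by the $q$-dependent one; then upgrade to the full equation.

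\textbf{Main steps.} First, an \emph{a priori energy bound}: integrating \eqref{renformEPS} against $q_{\varepsilon,\gamma}'$ and using \eqref{identityNRJ}, \eqref{identityNRJeps}, I would bound all right-hand-side contributions. The terms $\gamma^2 E$, $\gamma q'\times B$ are $O(|\gamma|)$ times a power of $\|q_{\varepsilon,\gamma}'\|$ on $\mathcal Q_\delta$; the $F_{1,b}$ and $F_{1,c}$ pieces of $F_1$ are controlled by $\|\nabla\mathcal A[q_{\varepsilon,\gamma},g_\varepsilon]\|_{L^2(\partial\mathcal S)}$, which by elliptic regularity (Remark~\ref{RemReg}) is $\lesssim \|g_\varepsilon(t,\cdot)\|$, hence $O(\beta_\varepsilon(t)+\beta_\varepsilon(T-t))$, whose $L^1_t$ norm is $O(\sqrt\varepsilon)$; the $F_2$ term involves $\partial_t\mathcal A[q_{\varepsilon,\gamma},g_\varepsilon]$, which splits into a piece from $\partial_t g_\varepsilon$ (scaling like $\beta_\varepsilon'/\varepsilon$-type, but paired only with the smooth trace of $\Phi$ and with the crucial cancellation \eqref{csR} when $q=q_0$ or $q_1$, leaving an $O(\varepsilon^{1/2})$-type contribution after accounting for the $q$-variation) and a piece from the domain motion $\partial_t q_{\varepsilon,\gamma}\cdot\nabla_q\mathcal A$, controlled by $\|q_{\varepsilon,\gamma}'\|$ times $\|g_\varepsilon\|$; and finally $F_{1,a}[\mathcal A[q_{\varepsilon,\gamma},g_\varepsilon]]$ contributes $\int_0^t(\beta_\varepsilon^2(\cdot)v_0+\beta_\varepsilon^2(T-\cdot)v_1)\cdot q_{\varepsilon,\gamma}'$ plus a localization error. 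Gronwall then gives $\sup_{[0,T_{\varepsilon,\gamma}]}\|q_{\varepsilon,\gamma}'\|\le C$ uniformly, and the mean value theorem yields $T_{\varepsilon,\gamma}\ge\tilde T$ for a fixed $\tilde T>0$, exactly as in Lemma~\ref{toy}.

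\textbf{Three-interval comparison.} On $[0,2\varepsilon]$: $q_{\varepsilon,\gamma}$ stays within $o(1)$ of $q_0$ by the velocity bound; integrating \eqref{renformEPS} and using \eqref{vendredi}, the continuity of $q\mapsto F_{1,a}(q)[\mathcal A[q,\overline g(q_0,-2v_0)]]$ near $q_0$, the value $v_0$ from Proposition~\ref{farcontr}, the $O(\varepsilon)$-smallness of the $\Gamma$ and $D\mathcal M_a$ integrals (as in \eqref{Grrr}), the $O(\sqrt\varepsilon)$-smallness of the $F_{1,b},F_{1,c},F_2$ contributions, and the $O(|\gamma|)$-smallness of the Lorentz-type terms, I would conclude $q_{\varepsilon,\gamma}'(2\varepsilon)\to c_0(q_1)$ as $\varepsilon,|\gamma|\to 0$, uniformly in $(q_1,q_1')$. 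On $[2\varepsilon,T-2\varepsilon]$: the leading force $F_{1,a}[\mathcal A[q_{\varepsilon,\gamma},g_\varepsilon]]$ vanishes since $\beta_\varepsilon$ does, so \eqref{renformEPS} reduces to the geodesic equation \eqref{geod} plus a perturbation of size $O(|\gamma|)$ uniformly in time (no $\sqrt\varepsilon$ gain here, but also no $\beta_\varepsilon$-type singularity); I would invoke a continuous-dependence statement like Lemma~\ref{depco}, upgraded to allow an $O(|\gamma|)$ forcing, to get $\|q_{\varepsilon,\gamma}(\cdot+2\varepsilon)-\overline q\|_{C^1([0,T-4\varepsilon])}\to 0$, hence $T_{\varepsilon,\gamma}\ge T-2\varepsilon$ and $\|q_{\varepsilon,\gamma}-\overline q\|_{T,\varepsilon}\to 0$. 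On $[T-2\varepsilon,T]$: symmetric to the first interval, with $v_1$ reorienting $q_{\varepsilon,\gamma}'$ from $c_1(q_1)$ to $q_1'$ while the position moves by $o(1)$, giving $T_{\varepsilon,\gamma}\ge T$ and $(q_{\varepsilon,\gamma},q_{\varepsilon,\gamma}')(T)\to(q_1,q_1')$. Uniformity in $(q_1,q_1')\in\overline B((q_0,q_0'),r_2)$ follows because $v_0,v_1$ and the geodesic data $c_0(q_1),c_1(q_1)$ depend continuously (hence, on the compact ball, uniformly) on $(q_1,q_1')$, and all error bounds above are uniform in these parameters.

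\textbf{Main obstacle.} The delicate point is the $F_2(q_{\varepsilon,\gamma})[\partial_t\mathcal A[q_{\varepsilon,\gamma},g_\varepsilon]]$ term on the short intervals: $\partial_t g_\varepsilon$ behaves like $\varepsilon^{-3/2}\beta'((\,t-\varepsilon)/\varepsilon)$, which is \emph{not} small in $L^1_t$, so naively this term could be $O(\varepsilon^{-1/2})$ and would dominate. The resolution must use the structure: $F_2(q)[\partial_t\mathcal A[q,\beta_\varepsilon(t)\overline g(q_0,-2v_0)]]$ contains a factor $\beta_\varepsilon'(t)\cdot F_2(q)[\mathcal A[q,\overline g(q_0,-2v_0)]]$, and by \eqref{csR} this last scalar \emph{vanishes at $q=q_0$}; since $q_{\varepsilon,\gamma}(t)-q_0=O(\varepsilon\,\|q_{\varepsilon,\gamma}'\|)$ on $[0,2\varepsilon]$, the product is $O(\varepsilon)\cdot O(\varepsilon^{-3/2})\cdot\|q_{\varepsilon,\gamma}'\|$... which is still $O(\varepsilon^{-1/2})$ pointwise, but after integrating by parts in $t$ (moving $\partial_t$ off $\beta_\varepsilon$ onto the smooth-in-$t$ factor $F_2(q_{\varepsilon,\gamma}(t))[\mathcal A[q_{\varepsilon,\gamma}(t),\cdot]]$, whose time derivative is $O(\|q_{\varepsilon,\gamma}'\|)$ and which itself is $O(|q_{\varepsilon,\gamma}-q_0|)=O(\varepsilon)$) one gains the needed power. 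Making this cancellation-plus-integration-by-parts argument rigorous, uniformly in $\gamma$ and in $(q_1,q_1')$, is the technical heart of the proof and is where I expect most of the work; the remaining estimates are of the same flavor as those already displayed in the proof of Lemma~\ref{toy}.
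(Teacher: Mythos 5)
Your overall scheme is the right one and you have correctly spotted the two most delicate ingredients: the three-interval comparison patterned on Lemma~\ref{toy}, and the cancellation \eqref{csR} needed to tame the $\beta_\varepsilon'$ singularity in the $F_2$ term. The paper does, however, organize the proof differently and, more importantly, your ``main obstacle'' paragraph contains a genuine misconception about the size of that $F_2$ contribution.

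\textbf{On the structure.} Rather than comparing $q_{\varepsilon,\gamma}$ directly to $\bar q$ in one pass, the paper factors the comparison through two explicit intermediaries: $\tilde q_\varepsilon$ solving \eqref{tildeq} (only $F_{1,a}$ retained) and $q_{\varepsilon,0}$ solving \eqref{renformEPS} with $\gamma=0$. Proposition~\ref{apr1} compares $\tilde q_\varepsilon$ to $\bar q$ (a clean analogue of Lemma~\ref{toy}), Proposition~\ref{apr2} compares $q_{\varepsilon,0}$ to $\tilde q_\varepsilon$ (this is where $F_{1,b}$ and $F_2$ are absorbed), and Proposition~\ref{gimpact} compares $q_{\varepsilon,\gamma}$ to $q_{\varepsilon,0}$ (where $\gamma^2 E$ and $\gamma q'\times B$ enter, the latter dropping from the energy identity since $(q'\times B)\cdot q'=0$, a fact you should exploit). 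This triangle-inequality decomposition keeps each Gronwall argument focused on a small, well-identified discrepancy, and it is what makes the uniformity in $(q_1,q_1')$ transparent.

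\textbf{On the $F_2$ estimate.} You correctly reduce the dangerous term to $\beta_\varepsilon'(t)\big(F_2(q)[\mathcal A[q,\overline g(q_0,-2v_0)]]-F_2(q_0)[\mathcal A[q_0,\overline g(q_0,-2v_0)]]\big)$ using \eqref{csR}, giving the pointwise bound $\lesssim \varepsilon^{-3/2}|q-q_0|$, hence $\lesssim \varepsilon^{-1/2}\sup|q'|$ via the mean value theorem. You then conclude that this is ``still $O(\varepsilon^{-1/2})$ pointwise'' and reach for an integration by parts in time. But this is not needed: in the energy estimate one multiplies by $q'$ and integrates over the short interval $[0,2\varepsilon]$, so the pointwise $\varepsilon^{-1/2}\sup|q'|\cdot|q'|$ becomes $\int_0^{2\varepsilon}\varepsilon^{-1/2}|q'|^2\,dt\lesssim \varepsilon^{1/2}\sup|q'|^2$, which is already small --- exactly the mechanism in the paper's bound \eqref{EstF2} followed by the Gronwall step in Lemma~\ref{bddtrajg}. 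Moreover, the integration by parts you propose would be problematic: moving $\partial_t$ off $\beta_\varepsilon$ inside the energy flux $\int_0^t\beta_\varepsilon'(s)\Theta(s)\,q'(s)\cdot\ldots\,ds$ produces a term $\int_0^t\beta_\varepsilon\,\partial_s(\Theta\,q')$, and $\partial_s q'=q''$ is precisely what one is trying to bound, so the scheme is circular unless one re-substitutes the ODE, at which point the estimate becomes considerably messier than the direct Gronwall plus mean-value argument. Dropping the integration by parts and integrating the pointwise bound over the $O(\varepsilon)$-length interval closes the gap and brings your argument into line with the paper's.
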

This result will be proved in Section \ref{S3}. %
%%%%%
%\begin{proposition}\label{apmain}
%There exists  $\varepsilon_0 \in (0,1)$ and $r$ in $(0,r_2)$ such that for $\varepsilon \in (0,\varepsilon_0)$ and 
%$\gamma$ in  $(-r , r )$  such that  the solution $q_{\varepsilon,\gamma}$ is defined up to 
%$T$, stays in $B(q_0 , r_1)$ and  satisfies  $\|q_{\varepsilon,\gamma} -\bar{q}\|_{T,\eps} \leqslant ?$.
%\end{proposition}
%
%%
%
Once Proposition \ref{apmain} is proved, Theorem~\ref{approx} follows rapidly. Indeed, let us set $\tilde{r}=r_2$,  according to  Proposition \ref{apmain},
for $\eta >0$, there exists  $\varepsilon=\varepsilon(\eta)>0$ and $r'=r'(\eta)$ in $(0,\tilde{r})$ such that  
for any $\gamma \in \mathbb{R}$ with $|\gamma| \leq r'$ and for any $q'_0 $ in $\overline{B}(0,\tilde{r})$   
the mapping  $\mathcal{T}$ defined on $ \overline{B}\big( (q_0 , q'_0), \tilde{r} \big)$ by setting 
$\mathcal{T}({q}_1 , {q}'_1 ) = q_{\varepsilon,\gamma} $, has the desired properties. In particular the continuity of $\mathcal{T}$ follows from the regularity of $c_{0}$ in Lemma~\ref{geo} and of the solution of ODEs on their initial data.
This ends the proof of Theorem~\ref{approx}.

\subsection{About Remark~\ref{smallflux}}
  \label{REM-smallflux}
  
  Now that we presented the scheme of proof of Theorem \ref{main} let us explain how to obtain the improvement mentioned in Remark  \ref{smallflux}.
  It is actually a direct consequence of the explicit formula for $g_{\varepsilon} (t,x)$ given in \eqref{gcontrol} and of a change of variable in time. 
 Due to the expression of $\beta_{\varepsilon}$ given at the beginning of Section \ref{Illu} one obtains that the total flux through $\Sigma^{-}$, that is 
  $\int_0^T \int_{\Sigma^{-}} \, g_{\varepsilon}  \, \, d\sigma  dt$, 
  is of order  $\sqrt{\varepsilon}$. Hence one can reduce $\varepsilon$ again in order to satisfy the requirement of Remark~\ref{smallflux}.

  On the other hand observe that the time-rescaling argument used in the proof of Theorem \ref{main} from Theorem \ref{lmain}, cf.~\eqref{sca-lambda}, leaves the total flux through $\Sigma^{-}$ invariant, while 
  the number $N$ of steps involved in the end of the same proof  does not depend on $\varepsilon$. 

%
%
%
%%%%%%%%%%%%%%%%%%%%%%%%%%%%%%%%%%%%%%%%%%%%%%%%%%%%%%%%%%%%%%%%%%%%%%%%%%%%%%%%%%%%%%%%%%%%%%%%%%%%%%%%%%%%%%%%%%%%%%%%%
%
%
%
%
%
%
%
\section{Closeness of the controlled system to the geodesic. Proof of Proposition \ref{apmain}}
\label{S3}

In this section, we prove Proposition \ref{apmain}.

\subsection{Proof of Proposition \ref{apmain}}
\label{SS33}
The proof of Proposition \ref{apmain} is split in several parts.
To compare $q_{\varepsilon,\gamma}$ and $\overline{q}$, we are going to consider an ``intermediate trajectory'' $\tilde{q}_{\varepsilon}$ which imitates the trajectory $q_{\varepsilon}$ of the toy model of Lemma~\ref{toy}, by using the part $F_{1,a}$ of the force term.
% with the boundary control $g_{\varepsilon}$  given by \eqref{gcontrol}. 
%
More precisely we define $\tilde{q}_\varepsilon$  by
\begin{multline} \label{tildeq}
\Big(\mathcal{M}_g+\mathcal{M}_a( \tilde{q}_{\varepsilon})\Big)  \tilde{q}''_{\varepsilon}
+ \langle\Gamma (\tilde{q}_{\varepsilon}),\tilde{q}'_{\varepsilon},\tilde{q}'_{\varepsilon}\rangle
= F_{1,a} (\tilde{q}_{\varepsilon}) \big[  \mathcal A[\tilde{q}_{\varepsilon},g_{\varepsilon}] \big]  ,  \\
\text{ with }
\tilde{q}_{\varepsilon}(0)=q_0,\ \tilde{q}_{\varepsilon}'(0)=q'_0,
\end{multline}
where $g_{\varepsilon}$ was defined in \eqref{gcontrol}  and where the operator $\mathcal A$ was introduced in Definition \ref{defA}.  
Note that due to the definition of $g_{\varepsilon}$, the function  $\tilde{q}_\varepsilon$  also depends on $q_1  ,q'_1$.
The statement below is an equivalent of Lemma~\ref{toy} for $\tilde{q}_{\varepsilon}$, comparing $\tilde{q}_{\varepsilon}$ to the ``target geodesic'' $\overline{q}$.
\begin{proposition} \label{apr1}
There exists  $\varepsilon_1>0$ such that, for any $\varepsilon\in(0,\varepsilon_1]$, for any $ ( {q}_1 , {q}'_1 ) $ in $ \overline{B}\big( (q_0 , q'_0), r_2  \big)$, 
the solution $\tilde{q}_\varepsilon$ given by \eqref{tildeq} lies in the ball  ${B} (q_0 , r_1  )$ at least up to $T$.
Moreover
 $\|\tilde{q}_{\varepsilon}-\bar{q}\|_{T, \varepsilon}   $  converges to $0$   and 
 $( \tilde{q}_{\varepsilon} , \tilde{q}'_{\varepsilon} )  (T)  $  converges to $  ( q_1  ,q'_1)$
 when $\varepsilon$   converges to $0^{+}$,  uniformly for  $ ( {q}_1 , {q}'_1 ) $ in $ \overline{B}\big( (q_0 , q'_0), r_2  \big)$ for both convergences.
\end{proposition}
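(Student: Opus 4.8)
The strategy is to reproduce the proof of Lemma~\ref{toy} almost verbatim, with the constant right-hand side $\beta_\varepsilon^2(\cdot)v_0 + \beta_\varepsilon^2(T-\cdot)v_1$ replaced by $F_{1,a}(\tilde q_\varepsilon)[\mathcal A[\tilde q_\varepsilon,g_\varepsilon]]$, and to check that the error introduced by this substitution tends to $0$ uniformly in $(q_1,q_1')$ in the prescribed ball. The starting point is the decomposition \eqref{vendredi}: on the support of $\beta_\varepsilon(\cdot)$ the right-hand side equals $\beta_\varepsilon^2(t)\,F_{1,a}(\tilde q_\varepsilon)[\mathcal A[\tilde q_\varepsilon,\overline g(q_0,-2v_0)]]$, and on the support of $\beta_\varepsilon(T-\cdot)$ it equals $\beta_\varepsilon^2(T-t)\,F_{1,a}(\tilde q_\varepsilon)[\mathcal A[\tilde q_\varepsilon,\overline g(q_1,-2v_1)]]$, while in between (on $[2\varepsilon,T-2\varepsilon]$) it vanishes identically, so the equation reduces exactly to the geodesic equation there. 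The supports of the two bumps are contained in $[0,2\varepsilon]$ and $[T-2\varepsilon,T]$ respectively, so the three-interval analysis of Lemma~\ref{toy} applies with no change on the middle interval.

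First I would establish the uniform a priori bound on $\|\tilde q_\varepsilon'\|$. Using the energy identity \eqref{identityNRJ}, multiplying \eqref{tildeq} by $\tilde q_\varepsilon'$ gives $\big(\tfrac12(\mathcal M_g+\mathcal M_a(\tilde q_\varepsilon))\tilde q_\varepsilon'\cdot\tilde q_\varepsilon'\big)' = F_{1,a}(\tilde q_\varepsilon)[\mathcal A[\tilde q_\varepsilon,g_\varepsilon]]\cdot\tilde q_\varepsilon'$. By Remark~\ref{RemReg} (continuity of the Kirchhoff potentials and of $\mathcal A[q,\cdot]$ with respect to $q$ on $\mathcal Q_\delta$, which is compact here since we stay in $\overline B(q_0,r_1)\subset\mathcal Q_\delta$), one has $|F_{1,a}(q)[\mathcal A[q,\bar g]]|\le C\|\bar g\|^2$ for $q$ in that ball, hence $|F_{1,a}(\tilde q_\varepsilon)[\mathcal A[\tilde q_\varepsilon,g_\varepsilon]]|\le C(\beta_\varepsilon^2(t)+\beta_\varepsilon^2(T-t))$ with $C$ depending only on $v_0,v_1$, hence uniform for $(q_1,q_1')$ in $\overline B((q_0,q_0'),r_2)$. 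Since $\int_0^T(\beta_\varepsilon^2(t)+\beta_\varepsilon^2(T-t))\,dt=2$, a Gronwall argument together with \eqref{identityNRJeps} yields $\sup_{(0,T_\varepsilon)}\|\tilde q_\varepsilon'\|\le C$, uniformly, and then the mean value theorem gives $T_\varepsilon\ge\tilde T$ for some fixed $\tilde T>0$, exactly as in Lemma~\ref{toy}.

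Next comes the matching at $t=2\varepsilon$. Integrating \eqref{tildeq} on $[0,2\varepsilon]$ and using $\int_0^{2\varepsilon}\beta_\varepsilon^2(t)\,dt=1$ and \eqref{vendredi}, one gets
\begin{align*}
\big(\mathcal M_g+\mathcal M_a(\tilde q_\varepsilon(2\varepsilon))\big)\tilde q_\varepsilon'(2\varepsilon)
= \big(\mathcal M_g+\mathcal M_a(q_0)\big)q_0' + F_{1,a}(q_0)\big[\mathcal A[q_0,\overline g(q_0,-2v_0)]\big] + \text{(errors)},
\end{align*}
where the errors are $\int_0^{2\varepsilon}$ of the $D\mathcal M_a\cdot q'\cdot q'$ term, of $\langle\Gamma,q',q'\rangle$, and of the difference $\beta_\varepsilon^2(t)\big(F_{1,a}(\tilde q_\varepsilon(t))[\mathcal A[\tilde q_\varepsilon(t),\overline g(q_0,-2v_0)]]-F_{1,a}(q_0)[\mathcal A[q_0,\overline g(q_0,-2v_0)]]\big)$; the first two go to $0$ by the $\|q'\|$-bound and the mean value theorem, and the last goes to $0$ because $\tilde q_\varepsilon(t)\to q_0$ uniformly on $[0,2\varepsilon]$ and $q\mapsto F_{1,a}(q)[\mathcal A[q,\bar g]]$ is continuous on $\overline B(q_0,r_1)$ (again Remark~\ref{RemReg}), while $\int_0^{2\varepsilon}\beta_\varepsilon^2=1$. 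By \eqref{f1a}–\eqref{hszR} in Proposition~\ref{farcontr}, $F_{1,a}(q_0)[\mathcal A[q_0,\overline g(q_0,-2v_0)]] = -\tfrac12\cdot(-2v_0) = v_0$ (note the sign: $F_{1,a}$ carries the $-\tfrac12$ and $\overline g(q_0,\cdot)$ is chosen so that the weighted integral equals its second argument, here $-2v_0$). Combined with the definition \eqref{defv} of $v_0$ and the invertibility of $\mathcal M_g+\mathcal M_a(q_0)$, this shows $\tilde q_\varepsilon'(2\varepsilon)\to c_0(q_1)$ and $\tilde q_\varepsilon(2\varepsilon)\to q_0$, uniformly in $(q_1,q_1')$. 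From here the middle interval is handled exactly as in Lemma~\ref{toy}: $\hat{\tilde q}_\varepsilon(t):=\tilde q_\varepsilon(t+2\varepsilon)$ solves the geodesic equation with data converging to $(q_0,c_0(q_1))$, so Lemma~\ref{depco} and Cauchy--Lipschitz uniqueness give $T_\varepsilon\ge T-2\varepsilon$ and $\|\tilde q_\varepsilon-\overline q(\cdot-2\varepsilon)\|_{C^1([2\varepsilon,T-2\varepsilon])}\to 0$, hence $\|\tilde q_\varepsilon-\overline q\|_{C^1([2\varepsilon,T-2\varepsilon])}\to 0$ by smoothness of $\overline q$. The uniformity in $(q_1,q_1')$ follows from the continuity of $q_1\mapsto(c_0,c_1)$ in Lemma~\ref{geo} and compactness of $\overline B((q_0,q_0'),r_2)$. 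Finally the interval $[T-2\varepsilon,T]$ is treated symmetrically: the same energy/Gronwall/mean-value argument gives $T_\varepsilon\ge T$ for $\varepsilon$ small, and integrating \eqref{tildeq} on $[T-2\varepsilon,T]$ with the choice $v_1$ in \eqref{defv} and $F_{1,a}(q_1)[\mathcal A[q_1,\overline g(q_1,-2v_1)]]=v_1$ reorients $\tilde q_\varepsilon'$ from $c_1(q_1)$ to $q_1'$ while $\tilde q_\varepsilon(T)\to q_1$; this gives $(\tilde q_\varepsilon,\tilde q_\varepsilon')(T)\to(q_1,q_1')$, and $\|\tilde q_\varepsilon-\overline q\|_{C^0([0,T])}\to 0$ together with the middle-interval $C^1$-estimate gives $\|\tilde q_\varepsilon-\overline q\|_{T,\varepsilon}\to 0$. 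The main obstacle, and the only genuinely new ingredient compared with Lemma~\ref{toy}, is the continuity and boundedness of $q\mapsto F_{1,a}(q)[\mathcal A[q,\bar g]]$ on the ball $\overline B(q_0,r_1)$ uniformly in $\bar g$ ranging over the (compact, by continuity of $\overline g$) image $\{\overline g(q_0,-2v_0),\overline g(q_1,-2v_1):(q_1,q_1')\in\overline B((q_0,q_0'),r_2)\}$ — i.e. domain-regularity of the Neumann problem \eqref{pot} combined with the boundary-integral formula \eqref{f1a}, which is exactly the material pointed to in Remark~\ref{RemReg}.
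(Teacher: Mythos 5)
Your proof is correct and follows essentially the same route as the paper's own proof: the same three-interval decomposition as in Lemma~\ref{toy}, the same energy estimate via \eqref{identityNRJ} and \eqref{identityNRJeps} to get the uniform a priori bound on $\tilde q_\varepsilon'$, the same integration of \eqref{tildeq} over $[0,2\varepsilon]$ together with $\int_0^{2\varepsilon}\beta_\varepsilon^2=1$ and the decomposition \eqref{vendredi}, the same identification $F_{1,a}(q_0)[\mathcal A[q_0,\overline g(q_0,-2v_0)]]=v_0$ via Proposition~\ref{farcontr}, and the same use of Lemma~\ref{depco} with Cauchy--Lipschitz uniqueness and a time shift on the middle interval. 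The only cosmetic difference is that the paper invokes a Lipschitz-in-$q$ property of $q\mapsto F_{1,a}(q)[\mathcal A[q,\overline g(q_0,v)]]$ uniform for $v$ in bounded sets, whereas you use continuity plus compactness, which is sufficient; your explicit sign check ($-\tfrac12\cdot(-2v_0)=v_0$) is a nice clarification the paper leaves implicit.
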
 
We recall that the norm $\| \cdot \|_{T, \varepsilon}$ was defined in \eqref{NormBonnasse}. 
The proof of Proposition \ref{apr1} can be found in Subsection \ref{SSpa1}.

The following result allows us to deduce the closeness of the trajectories $q_{\varepsilon,0}$, given by (\ref{renformEPS}) with $\gamma=0$, and $\tilde{q}_\varepsilon$ given by (\ref{tildeq}). Let us recall that by the definition of $T_{\varepsilon,\gamma}$ that comes along \eqref{renformEPS},  $q_{\varepsilon,0} $ lies in $B(q_0 , r_1) $ up to the time $T_{\varepsilon,0}$, which depends on $q_1  ,q'_1 $.

%
%.\varepsilon
%
%
%
\begin{proposition} \label{apr2}
There exists $\varepsilon_2$ in $(0,\varepsilon_1]$ such that for any $\varepsilon \in (0,\varepsilon_2]$, one has $T_{\varepsilon,0} \geq T$.
Moreover $\|\tilde{q}_{\varepsilon}-q_{\varepsilon,0}\|_{C^1([0,T])} \rightarrow 0$ when ${\varepsilon\to0^{+}}$,  
 uniformly  for  $ ( {q}_1 , {q}'_1 ) $ in $ \overline{B}\big( (q_0 , q'_0), r_2  \big)$.
\end{proposition}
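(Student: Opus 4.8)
\textbf{Proof plan for Proposition~\ref{apr2}.}
The plan is to compare the controlled trajectory $q_{\varepsilon,0}$ (solving \eqref{renformEPS} with $\gamma=0$) to the intermediate trajectory $\tilde q_\varepsilon$ (solving \eqref{tildeq}) directly via a Gronwall argument on the difference of the two second-order ODEs, exploiting that the two right-hand sides differ only by the ``lower-order'' pieces $F_{1,b}$ and $F_2$ of the force, which are small in the regime $\varepsilon\to0^+$ once one controls the relevant norms of $\mathcal A[q,g_\varepsilon]$ and $\partial_t\mathcal A[q,g_\varepsilon]$. First I would set $T_\varepsilon^* := \min(T_{\varepsilon,0}, T)$ and work on $[0,T_\varepsilon^*]$, where both $q_{\varepsilon,0}$ and $\tilde q_\varepsilon$ stay in $B(q_0,r_1)\subset\mathcal Q_\delta$ (for $\tilde q_\varepsilon$ this is guaranteed by Proposition~\ref{apr1} with $\varepsilon\le\varepsilon_1$), so that all the geometric quantities $\mathcal M_a$, $\Gamma$, $E$, $B$, $D\mathcal M_a$, and the elliptic solution operator $q\mapsto\mathcal A[q,\cdot]$ and the force operators $F_{1,a},F_{1,b},F_{1,c},F_2$ are uniformly bounded and Lipschitz in $q$ on the compact set $\overline{B}(q_0,r_1)$, uniformly in $(q_1,q_1')\in\overline B((q_0,q_0'),r_2)$ (the dependence on $(q_1,q_1')$ enters only through the bounded vectors $v_0,v_1$ in the definition \eqref{gcontrol} of $g_\varepsilon$, hence is harmless).

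The key quantitative input is the size of the control-generated potential. Since $g_\varepsilon(t,\cdot)=\beta_\varepsilon(t)\bar g(q_0,-2v_0)+\beta_\varepsilon(T-t)\bar g(q_1,-2v_1)$ with $\|\beta_\varepsilon\|_{C^0}=O(\varepsilon^{-1/2})$ and the spatial profiles $\bar g(\cdot,\cdot)$ uniformly bounded in $\mathcal C$ by continuity of $\bar g$ on the compact set $\overline B(q_0,r_1)\times\overline B(0,R)$ (Proposition~\ref{farcontr}), elliptic regularity for \eqref{pot} (Remark~\ref{RemReg}) gives $\|\mathcal A[q_{\varepsilon,0}(t),g_\varepsilon(t,\cdot)]\|_{C^1(\partial\mathcal S)} = O(\varepsilon^{-1/2})$ and, differentiating in $t$ (the $t$-derivative hits both $\beta_\varepsilon$, producing $\|\beta_\varepsilon'\|_{C^0}=O(\varepsilon^{-3/2})$, and the moving domain through $q_{\varepsilon,0}'$, which is $O(\varepsilon^{-1/2})$ at worst from the energy bound), $\|\partial_t\mathcal A\|_{C^0(\partial\mathcal S)}=O(\varepsilon^{-3/2})$. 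Consequently, from \eqref{f1b} and \eqref{ef3}, on the support of $g_\varepsilon$ (of measure $O(\varepsilon)$ near $0$ and near $T$) one gets $\int_0^{T}|F_{1,b}(q_{\varepsilon,0},q_{\varepsilon,0}')[\mathcal A]|\,dt = O(\varepsilon^{1/2})$ after using Cauchy--Schwarz and the uniform energy bound on $q_{\varepsilon,0}'$ (which follows exactly as in the proof of Lemma~\ref{toy}: multiply \eqref{renformEPS} by $q_{\varepsilon,0}'$, use \eqref{identityNRJ}, note that the $F_{1,a}$, $F_{1,b}$ and $F_2$ contributions integrate against $q_{\varepsilon,0}'$ to bounded quantities, and apply \eqref{identityNRJeps} and Gronwall). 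The crucial cancellation for $F_2$ is condition \eqref{csR}: since $\mathcal A[q,g_\varepsilon]$ is, up to the fixed-in-time spatial profiles times $\beta_\varepsilon$, built from $\bar g(q_0,\cdot)$ and $\bar g(q_1,\cdot)$, the term $F_2(q)[\partial_t\mathcal A]$ integrated by parts in time against $q_{\varepsilon,0}'$ and against the difference equation produces, at leading order, $\beta_\varepsilon^2$ times $F_2(q_0)[\mathcal A[q_0,\bar g(q_0,-2v_0)]]=0$ and similarly at $T$; the remaining error comes only from the variation of $q$ and of $F_2$ along the $O(\varepsilon)$-long supports, hence is $o(1)$. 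I would make this precise by writing $F_2(q_{\varepsilon,0})[\partial_t\mathcal A[q_{\varepsilon,0},g_\varepsilon]]=\tfrac{d}{dt}\bigl(F_2(q_{\varepsilon,0})[\mathcal A[q_{\varepsilon,0},g_\varepsilon]]\bigr)-(\text{terms with }\partial_t\text{ on the }q\text{-dependence})$, integrating the total-derivative term (which vanishes at $t=0$ by the support of $\beta_\varepsilon$ and is handled at $t=T$ by \eqref{csR} up to an $O(\varepsilon)$ shift of $q$ away from $q_1$), and estimating the remainder by the bounds above.

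With these estimates in hand, set $w_\varepsilon := q_{\varepsilon,0}-\tilde q_\varepsilon$. Subtracting \eqref{tildeq} from \eqref{renformEPS} with $\gamma=0$ and using the uniform positivity \eqref{identityNRJeps} to invert $\mathcal M_g+\mathcal M_a(q_{\varepsilon,0})$, one obtains $w_\varepsilon'' = \mathcal L_\varepsilon(t)(w_\varepsilon,w_\varepsilon') + \rho_\varepsilon(t)$ on $[0,T_\varepsilon^*]$, where $\mathcal L_\varepsilon$ collects the differences of $\mathcal M_a$, $\Gamma$ and $F_{1,a}$ evaluated at $q_{\varepsilon,0}$ versus $\tilde q_\varepsilon$ — each Lipschitz in $q$ with a coefficient that is $O(1)$ on $[2\varepsilon,T-2\varepsilon]$ and at worst $O(\|\mathcal A\|^2)=O(\varepsilon^{-1})$ on the two short end-intervals, but where the bad coefficient multiplies $\beta_\varepsilon^2$, so its time-integral is still $O(1)$ — and where the source $\rho_\varepsilon$ consists precisely of the $F_{1,b}$ and $F_2$ terms, whose $L^1_t$ norm is $o(1)$ by the previous paragraph. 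A Gronwall argument in the integral form $|w_\varepsilon(t)|+|w_\varepsilon'(t)| \le \int_0^t \bigl(a_\varepsilon(s)(|w_\varepsilon|+|w_\varepsilon'|)(s) + |\rho_\varepsilon(s)|\bigr)\,ds$ with $\int_0^T a_\varepsilon = O(1)$ and $w_\varepsilon(0)=w_\varepsilon'(0)=0$ then yields $\|w_\varepsilon\|_{C^1([0,T_\varepsilon^*])} = o(1)$. Since $\tilde q_\varepsilon$ stays in $B(q_0,r_1)$ up to $T$ by Proposition~\ref{apr1} and $w_\varepsilon\to0$ uniformly, a standard continuation/bootstrap argument forces $T_{\varepsilon,0}\ge T$ for $\varepsilon$ below some $\varepsilon_2\in(0,\varepsilon_1]$ (otherwise $q_{\varepsilon,0}$ would reach $\partial B(q_0,r_1)$ while staying within $o(1)$ of $\tilde q_\varepsilon\in B(q_0,r_1')$, a contradiction for small $\varepsilon$), and then $\|\tilde q_\varepsilon - q_{\varepsilon,0}\|_{C^1([0,T])}\to0$ uniformly in $(q_1,q_1')$. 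The main obstacle I anticipate is the bookkeeping of the $\varepsilon$-powers on the two short boundary layers $[0,2\varepsilon]$ and $[T-2\varepsilon,T]$: one must verify that every dangerous factor $\beta_\varepsilon$ or $\beta_\varepsilon'$ appears either squared-and-integrated (giving $O(1)$ or $O(\varepsilon^{1/2})$) or paired with the exact cancellation \eqref{csR}, and in particular that the $F_2[\partial_t\mathcal A]$ term — naively of size $\varepsilon^{-3/2}$ pointwise on an interval of length $\varepsilon$, hence potentially $\varepsilon^{-1/2}$ in $L^1_t$ — is tamed down to $o(1)$ by the integration-by-parts in time combined with \eqref{csR}; this is exactly the place where the design of the control and the condition \eqref{csR} pay off, and it is the step that deserves the most care.
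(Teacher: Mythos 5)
Your plan is essentially the route taken in the paper: first establish a uniform $C^0$ bound on $q_{\varepsilon,0}'$ via an energy estimate, then compare $q_{\varepsilon,0}$ to $\tilde q_\varepsilon$ by a Gronwall argument, with condition \eqref{csR} providing the crucial cancellation that tames the $F_2[\partial_t\mathcal A]$ term. The paper isolates the velocity bound as Lemma~\ref{bddtrajg} and handles $F_2[\partial_t\mathcal A]$ by expanding $\partial_t\mathcal A$ via the chain rule and inserting \eqref{csR} to get the pointwise estimate \eqref{EstF2}, $|F_2[\partial_t\mathcal A]|\lesssim\varepsilon^{-1/2}|q_{\varepsilon,0}'|+\varepsilon^{-3/2}|q_{\varepsilon,0}-q_0|$, while you propose an integration by parts in time followed by \eqref{csR} on the boundary term; these are equivalent variants, both ultimately gaining the decisive factor $|q_{\varepsilon,0}-q_0|=O(\varepsilon)$ from \eqref{csR} plus the mean value theorem. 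One small imprecision: the velocity bound does \emph{not} follow ``exactly as in the proof of Lemma~\ref{toy}'' — the toy model has no $F_2$ term, and the energy estimate for the full system already requires the \eqref{csR} cancellation and the bound $|q_{\varepsilon,0}-q_0|\lesssim\varepsilon$ in order to absorb the $\beta_\varepsilon'$-weighted part of $F_2[\partial_t\mathcal A]$ (otherwise one picks up an $O(\varepsilon^{-1/2})$ contribution to the energy, which does not close); since you discuss precisely this cancellation at length elsewhere in your plan, this is a matter of phrasing rather than a gap.
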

The proof of Proposition \ref{apr2} can be found in Subsection \ref{SSpa2}.

Finally, we have the following estimation of the deviation due to the circulation $\gamma$, which will be proved in Subsection \ref{SS34}.
\begin{proposition}\label{gimpact}
There exists $\varepsilon_3$ in $(0,\varepsilon_2]$  such that 
for all $\varepsilon \in (0,\varepsilon_{3}]$, there exists $\gamma_0 >0$  
 such that 
for any  $\gamma\in[-\gamma_0,\gamma_0]$,  we have $T_{\varepsilon,\gamma}\geq T$ and
$\|q_{\varepsilon,\gamma}-q_{\varepsilon,0}\|_{C^1 [0,T] }   $  converges to $0$ 
  when $ \gamma \to 0$,  
 uniformly for  $ ( {q}_1 , {q}'_1 ) $ in $ \overline{B}\big( (q_0 , q'_0), r_2  \big)$.
\end{proposition}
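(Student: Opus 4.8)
The plan is to run a Gronwall-type comparison between the two trajectories $q_{\varepsilon,\gamma}$ and $q_{\varepsilon,0}$, treating the circulation-dependent terms as a perturbation whose size is controlled by $|\gamma|$ (with $\varepsilon$ already fixed, so that $g_\varepsilon$ and hence $\mathcal{A}[\cdot,g_\varepsilon]$ are fixed data). First I would fix $\varepsilon_3 := \varepsilon_2$ and fix $\varepsilon\in(0,\varepsilon_3]$. By Proposition \ref{apr2}, $q_{\varepsilon,0}$ exists on $[0,T]$ and stays in $B(q_0,r_1)$, with a uniform $C^1$ bound on $[0,T]$ (uniformly for $(q_1,q_1')$ in $\overline{B}((q_0,q_0'),r_2)$, since $v_0,v_1$ depend continuously on these data and range over a compact set). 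Choose $r_1$ slightly enlarged if needed so that $q_{\varepsilon,0}$ actually lies in a ball $B(q_0,r_1/2)$ strictly inside $\mathcal{Q}_\delta$; then define, as in the proof of Lemma \ref{toy}, $T_{\varepsilon,\gamma}=\sup\{\hat T: q_{\varepsilon,\gamma}(t)\in B(q_0,r_1)\text{ on }(0,\hat T)\}$, which is positive by Cauchy--Lipschitz.

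The core estimate: subtract equation \eqref{renformEPS} with circulation $\gamma$ from the same equation with circulation $0$. Setting $w_\gamma := q_{\varepsilon,\gamma}-q_{\varepsilon,0}$, the left-hand sides differ by $(\mathcal{M}_g+\mathcal{M}_a(q_{\varepsilon,\gamma}))q_{\varepsilon,\gamma}''-(\mathcal{M}_g+\mathcal{M}_a(q_{\varepsilon,0}))q_{\varepsilon,0}''$ plus the difference of the $\Gamma$-terms; using the uniform ellipticity \eqref{identityNRJeps}, the smoothness of $q\mapsto \mathcal{M}_a(q),\Gamma(q)$ on $\mathcal{Q}_\delta$, and the uniform $C^1$ bound on $q_{\varepsilon,0}$, these combine (after multiplying by $(\mathcal{M}_g+\mathcal{M}_a(q_{\varepsilon,\gamma}))^{-1}$) into a term of the form $w_\gamma''+$ (Lipschitz-in-$(w_\gamma,w_\gamma')$ remainder bounded by $C(|w_\gamma|+|w_\gamma'|)$), as long as both trajectories stay in $\mathcal{Q}_\delta$. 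The right-hand side contributes: (i) the genuinely $\gamma$-dependent terms $\gamma^2 E(q_{\varepsilon,\gamma})+\gamma\, q_{\varepsilon,\gamma}'\times B(q_{\varepsilon,\gamma})+\gamma F_{1,c}(q_{\varepsilon,\gamma})[\mathcal{A}[q_{\varepsilon,\gamma},g_\varepsilon]]$, which are bounded by $C(|\gamma|+|\gamma|^2)(1+|w_\gamma'|)$ using the uniform bounds on $E,B,F_{1,c}$ over $\mathcal{Q}_\delta$ and the fixed (hence bounded) control $g_\varepsilon$; and (ii) the differences $F_{1,a}(q_{\varepsilon,\gamma})[\mathcal{A}[q_{\varepsilon,\gamma},g_\varepsilon]]-F_{1,a}(q_{\varepsilon,0})[\mathcal{A}[q_{\varepsilon,0},g_\varepsilon]]$ and likewise for $F_{1,b}$ and $F_2[\partial_t\mathcal{A}]$, each of which is $\le C|w_\gamma|$ by the regularity of $q\mapsto\mathcal{A}[q,g_\varepsilon]$ with respect to the domain (Remark \ref{RemReg}) together with $\varepsilon$ being fixed, so $g_\varepsilon$ and $\partial_t g_\varepsilon$ are uniformly bounded. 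I would phrase $F_2[\partial_t\mathcal{A}]$ carefully: since $\varepsilon$ is fixed, $\partial_t\mathcal{A}[q_{\varepsilon,\gamma},g_\varepsilon]$ involves $\partial_t g_\varepsilon$ and $\partial_t q_{\varepsilon,\gamma}$ through the shape derivative, giving a bound $C(|w_\gamma|+|w_\gamma'|)$ — this is the one place where the $\partial_t q$ dependence forces a $|w_\gamma'|$ term, but that is harmless for Gronwall.

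Putting this together yields $|w_\gamma'(t)|+|w_\gamma(t)|\le C|\gamma|\,t + C\int_0^t(|w_\gamma'(s)|+|w_\gamma(s)|)\,ds$ on $[0,T_{\varepsilon,\gamma}]$, so by Gronwall $\|w_\gamma\|_{C^1[0,T_{\varepsilon,\gamma}]}\le C|\gamma|e^{CT}$. Choosing $\gamma_0$ small enough that $C\gamma_0 e^{CT}<r_1/2$ forces $q_{\varepsilon,\gamma}$ to remain in $B(q_0,r_1)$ (it cannot reach the boundary sphere of radius $r_1$ since $q_{\varepsilon,0}$ stays in $B(q_0,r_1/2)$), hence a standard continuation argument gives $T_{\varepsilon,\gamma}\ge T$; and then $\|q_{\varepsilon,\gamma}-q_{\varepsilon,0}\|_{C^1[0,T]}\le C|\gamma|e^{CT}\to 0$ as $\gamma\to 0$, with all constants independent of $(q_1,q_1')\in\overline{B}((q_0,q_0'),r_2)$ by the uniform bounds collected above. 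The main obstacle I anticipate is bookkeeping the dependence of the shape-derivative term $F_2(q)[\partial_t\mathcal{A}[q,g_\varepsilon]]$ — one must check that, with $\varepsilon$ frozen, this quantity is genuinely Lipschitz in $(q,q')$ uniformly on $\mathcal{Q}_\delta$, which is where the elliptic regularity-with-respect-to-the-domain results alluded to in Remark \ref{RemReg} are really used; everything else is routine Gronwall.
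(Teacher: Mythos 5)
Your overall strategy --- fix $\varepsilon$, treat the circulation terms as a $|\gamma|$-small perturbation, and run a Gronwall comparison between $q_{\varepsilon,\gamma}$ and $q_{\varepsilon,0}$ --- is the same as the paper's, which also describes the proof simply as ``a comparison argument.'' However, the paper inserts one intermediate step that your argument is missing and genuinely needs: it first proves (Lemma~\ref{bddtrajg++}) that $\|q'_{\varepsilon,\gamma}\|_{C([0,T_{\varepsilon,\gamma}])}$ is bounded \emph{uniformly} for $\gamma\in[-1,1]$ and $(q_1,q'_1)\in\overline{B}((q_0,q'_0),r_2)$, before doing any subtraction. The paper obtains this a priori $C^1$ bound exactly as in Lemma~\ref{bddtrajg}: by the energy identity \eqref{identityNRJ}/\eqref{EEs}, remarking that the magnetic term $\gamma\,q'\times B(q)$ is orthogonal to $q'$ and so disappears from the energy, while the electric term $\gamma^2 E(q)$ is bounded on $\mathcal{Q}_\delta$.

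The reason this matters: when you subtract \eqref{renformEPS} at $\gamma$ from \eqref{renformEPS} at $0$, the $\langle\Gamma(q),q',q'\rangle$ difference and the $D\mathcal{M}_a(q)\cdot q'\cdot q'$ contribution are \emph{quadratic} in the velocity. Expanding, these produce a term of the form $|q'_{\varepsilon,\gamma}|\,|w'_\gamma|$ with $w_\gamma=q_{\varepsilon,\gamma}-q_{\varepsilon,0}$, hence $|w'_\gamma|^2$ once you write $q'_{\varepsilon,\gamma}=q'_{\varepsilon,0}+w'_\gamma$. Your claim that the remainder from the left-hand side is ``Lipschitz-in-$(w_\gamma,w'_\gamma)$'' is therefore only valid on a set where $q'_{\varepsilon,\gamma}$ is already known to be bounded, and the linear Gronwall inequality $|w'_\gamma(t)|+|w_\gamma(t)|\le C|\gamma|t+C\int_0^t(|w'_\gamma|+|w_\gamma|)$ as you state it is not available without that control. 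One can repair your argument by a bootstrap (redefine the maximal time to also require, say, $|w'_\gamma|\le1$, run the now-linear Gronwall, and check that the bound $C|\gamma|e^{CT}$ keeps you strictly inside), but the paper's order --- first an energy estimate for $q'_{\varepsilon,\gamma}$, then a clean comparison with all coefficients uniformly bounded --- decouples the two issues and avoids the circularity. You should either state and use the analogue of Lemma~\ref{bddtrajg++} explicitly, or make the bootstrap on $w'_\gamma$ explicit; as written, the Gronwall step is not justified.
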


Propositions \ref{apr1}, \ref{apr2} and \ref{gimpact} give us directly the result of Proposition \ref{apmain}.
\subsection{Proof of Proposition \ref{apr1}}
\label{SSpa1}
We proceed as in the proof of Lemma \ref{toy} with a few extra complications related to the fact that the right hand side of the equation \eqref{tildeq} is more involved than the one of the equation \eqref{RER}  and  to the fact that we need to obtain uniform convergences with respect to $ ( {q}_1 , {q}'_1 ) $ in $ \overline{B}\big( (q_0 , q'_0), r_2  \big)$.
 
 As in the proof of Lemma \ref{toy} we introduce, for  $\eps$ in $(0,1)$, the time  $T_{\varepsilon} = \sup\,  \{  \hat{T} > 0   \text{ such that  } \,  \tilde q_{\varepsilon} (t) \in B(q_0 , r_1)   \text{ for } t \in (0,\hat{T}) \}$ 
and we first prove that there exists $\tilde{T} > 0$ such that for any $\eps$ in $(0,1)$,  $T_{\varepsilon} \geq \tilde{T}$ thanks to an energy estimate. 
In order to deal with the term coming from \eqref{vendredi} in the right hand side of the energy estimate,
recalling Remark~\ref{RemReg} and the definition of $F_{1,a}$ in \eqref{f1a},
we observe that for any $R > 0$, 
there exists $C > 0$ such that for any $q, \tilde q$ in $\mathcal{Q}_\delta$, for any $v$ in $B(0,R)$, 
\begin{equation}
| \, F_{1,a} (q) \big[\mathcal  A [q ,\overline g(\tilde q,v) ] \big] \, |   \leq C .
\end{equation}
This allows to deduce from the expressions of $v_0 $ and $v_1$  in \eqref{defv}
that there exists $\tilde{T} > 0$ and $C>0$ 
such that
 for any $ ( {q}_1 , {q}'_1 ) $  in $ \overline{B}\big( (q_0 , q'_0), r_2  \big)$,  for any $\eps$ in $(0,1)$,  $T_{\varepsilon} \geq \tilde{T}$ 
 and $ \|\tilde{q}'_{\varepsilon}\|_{C([0,{T}_{\varepsilon}])} \leq C$. 
We deduce that for  $ \eps_1 := \tilde{T} / 2 $ and $\eps$ in $(0,\eps_1)$,  $T_{\varepsilon} \geq  2 \varepsilon$ and 
 that   $\sup_{t \in [0, 2\varepsilon]} \,  | \tilde q_{\varepsilon} - q_0  | $ converges to $0$ as $\varepsilon$ goes to $0$  uniformly in  $( q_1  ,q'_1)$ in $ \overline{B}\big( (q_0 , q'_0), r_2  \big)$.

Now let us prove that 
  $ \tilde{q}_{\varepsilon}'(2\varepsilon) $  converges to $c_0 (q_1)$ as $\varepsilon$ goes  to $0$ 
   uniformly in  $( q_1  ,q'_1)$ in $ \overline{B}\big( (q_0 , q'_0), r_2  \big)$.
We integrate the equation \eqref{tildeq} on $[0, 2\varepsilon]$. 
  Thus 
\begin{multline} \label{tildeqInt}
 \Big(\mathcal{M}_g+ \mathcal{M}_a( \tilde{q}_{\varepsilon}  (2\varepsilon)  )\Big)  \tilde{q}'_{\varepsilon} (2\varepsilon) 
=  \Big(\mathcal{M}_g+\mathcal{M}_a( q_0)\Big) {q}'_0  \\
- \int_0^{2\varepsilon} \Big( D\mathcal{M}_a( \tilde{q}_{\varepsilon}) \cdot   \tilde{q}'_{\varepsilon}  \Big) \cdot \tilde{q}'_{\varepsilon} \, dt
- \int_0^{2\varepsilon}\langle\Gamma (\tilde{q}_{\varepsilon}),\tilde{q}'_{\varepsilon},\tilde{q}'_{\varepsilon}\rangle \, dt
+ \int_0^{2\varepsilon}  F_{1,a} (\tilde{q}_{\varepsilon}) \big[  \mathcal A[\tilde{q}_{\varepsilon},g_{\varepsilon}] \big]  \, dt .
\end{multline}
Then we pass  to the limit as  $\varepsilon$ goes to $0^+$  in the last equality.
Here we use two extra arguments  with respect to the corresponding argument in the proof of Lemma \ref{toy}.
On the one hand we see that the convergences of $ \mathcal{M}_a( \tilde{q}_{\varepsilon}  (2\varepsilon) )$ to 
$ \mathcal{M}_a( q_0 ) $
and of the two first terms of the last line to $0$, already obtained in the proof of Lemma \ref{toy}, 
hold uniformly with respect to  $( q_1  ,q'_1)$ in $ \overline{B}\big( (q_0 , q'_0), r_2  \big)$, as a consequence of the uniform estimates of $\tilde q_{\varepsilon}- q_0$ and  $ \tilde{q}_{\varepsilon}' $ obtained above.
On the other hand  the 
term $  F_{1,a} $ enjoys the following  regularity property with respect to $q$: 
   we have that 
   $ q \mapsto F_{1,a} (q) \Big[ \mathcal  A [q , \overline{g} (q_0, v) ]\Big] $ is  Lipschitz with respect to $q$ in $\mathcal{Q}_\delta$ uniformly 
   for $v$ in bounded sets of $\mathbb \R^3$. 
   Therefore  using that 
     $\sup_{t \in [0, 2\varepsilon]} \,  | \tilde q_{\varepsilon} - q_0  | $ converges to $0$ as $\varepsilon$ goes to $0$  uniformly in  $( q_1  ,q'_1)$ in $ \overline{B}\big( (q_0 , q'_0), r_2  \big)$, 
     the  expressions of $v_0 $ and $v_1$  in \eqref{defv} and that 
     $F_{1,a} (q_0) \Big[ \mathcal  A [q_0 ,\overline{g} (q_0, -2v_0)  \Big] = v_0$, according to Proposition~\ref{farcontr} we deduce that 
     $$\sup_{t \in [0, 2\varepsilon]} \, 
    \Big| F_{1,a} (\tilde{q}_{\varepsilon}) \Big[ \mathcal  A [\tilde{q}_{\varepsilon} , \overline{g} (q_0, -2v_0) ]\Big]  - v_0  \Big|$$
   converges to $0$ as $\varepsilon$ goes to $0$ uniformly in   $( q_1  ,q'_1)$ in $ \overline{B}\big( (q_0 , q'_0), r_2  \big)$.
   Since for $t$ in  $[0, 2\varepsilon]$, the equation   \eqref{vendredi} applied to $q=\tilde{q}_{\varepsilon}$  is simplified into
\begin{equation*}
F_{1,a} (\tilde{q}_{\varepsilon}) [\mathcal  A [\tilde{q}_{\varepsilon} ,g_{\varepsilon} ]] = 
\beta_{\varepsilon}^2 (t) F_{1,a} (\tilde{q}_{\varepsilon}) \Big[ \mathcal  A [\tilde{q}_{\varepsilon} ,\overline{g} (q_0, -2v_0)  ] \Big]  ,
\end{equation*}
and that 
  $\int_{0}^{2\varepsilon} \beta_{\varepsilon}^2 (t)\, dt=1$, 
   we get that the last term in \eqref{tildeqInt} converges to $ v_0$ when $\varepsilon$ goes to $0$.
   Moreover, due to the choice of $v_0$ the first and last term of the right hand side of  \eqref{tildeqInt}  can be combined at the limit to get 
   $ \Big(\mathcal{M}_g+\mathcal{M}_a( q_0)\Big) c_0 (q_1)$.
   
  Therefore, inverting the matrix in the right hand side of \eqref{tildeqInt} and passing to the limit, we see that
    $ \tilde{q}_{\varepsilon}'(2\varepsilon) $  converges to $c_0 (q_1)$ as $\varepsilon$ goes  to $0$ 
   uniformly in  $( q_1  ,q'_1)$ in $ \overline{B}\big( (q_0 , q'_0), r_2  \big)$.
  
  When  $t$ is in 
$[ 2\varepsilon,T-2\varepsilon]$, the equation 
\eqref{tildeq} reduces to a geodesic equation so that the same arguments as in  the proof of Lemma \ref{toy} apply. 

Finally for the last step, for $t$  in 
$[ T-2\varepsilon,T]$, we proceed   in the same way as in the first step.
This ends the proof of Proposition \ref{apr1}.
\subsection{Proof of Proposition \ref{apr2}}
\label{SSpa2}
We begin with the following lemma, which provides a uniform boundedness for the trajectories $q_{\varepsilon,0}$ satisfying (\ref{renformEPS}) with $\gamma=0$, that is
\begin{multline} \label{renff}
\big( \mathcal{M}_g + \mathcal{M}_a (q_{\varepsilon,0}) \big) q_{\varepsilon,0}''
+ \langle \Gamma (q_{\varepsilon,0}) , q_{\varepsilon,0}',q_{\varepsilon,0}'\rangle
=  F_{1,a} (q_{\varepsilon,0}) \big[   \mathcal A[q_{\varepsilon,0} ,g_{\varepsilon} ] \big]
\\ + F_{1,b} (q_{\varepsilon,0},q_{\varepsilon,0}') \big[   \mathcal A[q_{\varepsilon,0} ,g_{\varepsilon} ] \big]
+ F_2 (q_{\varepsilon,0}) \big[\partial_t  \mathcal A[q_{\varepsilon,0} ,g_{\varepsilon} ]  \big] .
\end{multline}
We recall that $g_\varepsilon$ is given by \eqref{gcontrol} with $v_0$ and $v_1$  given by \eqref{defv}. The terms $F_{1,a}$ and $F_{1,b}$ were defined in \eqref{f1a}-\eqref{f1b}, $F_{2}$ in \eqref{ef2}.
%, which explains the dependency of $q_{\varepsilon,0}$ in $(q_1,q'_1)$.
Also we recall that by definition of $T_{\varepsilon,0}$ (see the definition of $T_{\varepsilon,\gamma}$ in the end of Subsection~\ref{REM}), during the time interval $[0,T_{\varepsilon,0}]$, $q_{\varepsilon,0}$ remains in $B(q_0,r_1)$. 

%We actually state a more general result that includes also $\gamma \neq 0$, since this will be useful in the next subsection.
%
\begin{lemma}\label{bddtrajg}
There exists $\varepsilon_a>0$ such that
$$\sup_{\substack{{(q_1,q'_1) \in \overline{B}((q_0,q'_0),r_2),} \\ {\varepsilon\in(0,\varepsilon_a]}}} \|q'_{\varepsilon,0}\|_{C([0,T_{\varepsilon,0}])}<+\infty.$$
\end{lemma}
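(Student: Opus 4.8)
The plan is to derive an energy-type estimate for $q_{\varepsilon,0}$ on $[0,T_{\varepsilon,0}]$ and close it by Gronwall's lemma, exactly as in the proof of Lemma~\ref{toy}, but now paying attention to the three force terms $F_{1,a}$, $F_{1,b}$ and $F_2$ appearing in \eqref{renff}. First I would multiply \eqref{renff} by $q'_{\varepsilon,0}$ and use the identity \eqref{identityNRJ} to get
\begin{equation*}
\Big(\tfrac12 \big( \mathcal{M}_g+\mathcal{M}_a(q_{\varepsilon,0})\big) q'_{\varepsilon,0}\cdot q'_{\varepsilon,0}\Big)'
= \Big( F_{1,a}(q_{\varepsilon,0})\big[\mathcal A[q_{\varepsilon,0},g_\varepsilon]\big]
+ F_{1,b}(q_{\varepsilon,0},q'_{\varepsilon,0})\big[\mathcal A[q_{\varepsilon,0},g_\varepsilon]\big]
+ F_2(q_{\varepsilon,0})\big[\partial_t\mathcal A[q_{\varepsilon,0},g_\varepsilon]\big]\Big)\cdot q'_{\varepsilon,0}.
\end{equation*}
Since $q_{\varepsilon,0}$ stays in $B(q_0,r_1)\subset\mathcal{Q}_\delta$, the equivalence \eqref{identityNRJeps} holds, so it suffices to bound the right-hand side. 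The key point is that $g_\varepsilon(t,\cdot)=\beta_\varepsilon(t)\overline g(q_0,-2v_0)+\beta_\varepsilon(T-t)\overline g(q_1,-2v_1)$ with $v_0,v_1$ given by \eqref{defv}; since $(q_1,q'_1)$ ranges over the compact ball $\overline{B}((q_0,q'_0),r_2)$ and $c_0,c_1$ are continuous (Lemma~\ref{geo}), $v_0$ and $v_1$ stay in a fixed bounded set, and by continuity of $\overline g$ (Proposition~\ref{farcontr}) the functions $\overline g(q_0,-2v_0),\overline g(q_1,-2v_1)$ lie in a bounded set of $\mathcal C$. Hence, by the elliptic regularity of $\mathcal A$ with respect to the domain (Remark~\ref{RemReg}) and since $q_{\varepsilon,0}\in\mathcal{Q}_\delta$, the traces of $\mathcal A[q_{\varepsilon,0},g_\varepsilon]$ and of $\nabla\mathcal A[q_{\varepsilon,0},g_\varepsilon]$ on $\partial\mathcal{S}(q_{\varepsilon,0})$ are bounded by $C(\beta_\varepsilon(t)+\beta_\varepsilon(T-t))$, uniformly in $\varepsilon$ and in $(q_1,q'_1)$.

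With these bounds in hand I would estimate the three force terms separately, reading off their definitions \eqref{f1a}, \eqref{f1b}, \eqref{ef3}. From \eqref{f1a}, $F_{1,a}$ is quadratic in $\nabla\mathcal A$, so $|F_{1,a}(q_{\varepsilon,0})[\mathcal A[q_{\varepsilon,0},g_\varepsilon]]|\le C(\beta_\varepsilon(t)^2+\beta_\varepsilon(T-t)^2)$. From \eqref{f1b}, $F_{1,b}$ is linear in $\nabla\mathcal A$ and linear in $q'_{\varepsilon,0}$ (through $\nabla(p\cdot\Phi)$ with $p=q'_{\varepsilon,0}$), so $|F_{1,b}(q_{\varepsilon,0},q'_{\varepsilon,0})[\mathcal A[q_{\varepsilon,0},g_\varepsilon]]|\le C(\beta_\varepsilon(t)+\beta_\varepsilon(T-t))\,|q'_{\varepsilon,0}|$. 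The delicate term is $F_2$, which by \eqref{ef3} involves $\partial_t\mathcal A[q_{\varepsilon,0},g_\varepsilon]$: here the time derivative hits both the explicit $t$-dependence through $\beta_\varepsilon'$ (producing a factor of order $\varepsilon^{-3/2}$, since $\beta_\varepsilon(t)=\varepsilon^{-1/2}\beta((t-\varepsilon)/\varepsilon)$) and the $q$-dependence through $\partial_t q_{\varepsilon,0}=q'_{\varepsilon,0}$. This is the main obstacle: naively $F_2\cdot q'_{\varepsilon,0}$ is of size $\varepsilon^{-3/2}|q'_{\varepsilon,0}|$, which is far too large for a Gronwall argument on a fixed interval.

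The resolution — and this is the crucial observation I would make — is that the $\beta_\varepsilon'$ contribution to $F_2(q_{\varepsilon,0})[\partial_t\mathcal A[q_{\varepsilon,0},g_\varepsilon]]$ is itself a total time derivative. Indeed, writing $\mathcal A[q_{\varepsilon,0}(t),g_\varepsilon(t,\cdot)]=\beta_\varepsilon(t)\mathcal A[q_{\varepsilon,0}(t),\overline g(q_0,-2v_0)]+\beta_\varepsilon(T-t)\mathcal A[q_{\varepsilon,0}(t),\overline g(q_1,-2v_1)]$ by linearity of $\mathcal A$ in its second argument, the term $F_2(q_{\varepsilon,0})\big[\beta_\varepsilon'(t)\mathcal A[q_{\varepsilon,0},\overline g(q_0,-2v_0)]\big]$ can be integrated by parts in time against $q'_{\varepsilon,0}$: one moves the $\beta_\varepsilon'$ onto the remaining factors, picking up terms with $\beta_\varepsilon(t)$ times either $q'_{\varepsilon,0}$, $q''_{\varepsilon,0}$ or bounded quantities, plus a boundary term $\big[\beta_\varepsilon F_2(q_{\varepsilon,0})[\mathcal A[\cdot,\overline g]]\cdot q'_{\varepsilon,0}\big]$ which vanishes at $t=0$ (where $\beta_\varepsilon=0$) and is controlled at the running time. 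Substituting $q''_{\varepsilon,0}$ from \eqref{renff} itself re-expresses everything in terms already estimated (at worst quadratic in $q'_{\varepsilon,0}$ with an $L^1_t$ weight of total mass $O(1)$, coming from $\int\beta_\varepsilon^2=1$ and $\int\beta_\varepsilon=O(\sqrt\varepsilon)$), closing the loop. Thus, setting $y_\varepsilon(t):=\big(\mathcal{M}_g+\mathcal{M}_a(q_{\varepsilon,0}(t))\big)q'_{\varepsilon,0}(t)\cdot q'_{\varepsilon,0}(t)$, one arrives after these manipulations at an inequality of the form $y_\varepsilon(t)\le C + C\int_0^t (1+\beta_\varepsilon^2(s)+\beta_\varepsilon^2(T-s))\,y_\varepsilon(s)\,ds$ with $C$ independent of $\varepsilon\in(0,\varepsilon_a]$ and of $(q_1,q'_1)\in\overline{B}((q_0,q'_0),r_2)$, and since $\int_0^T(1+\beta_\varepsilon^2(s)+\beta_\varepsilon^2(T-s))\,ds\le T+2$ is bounded, Gronwall's lemma gives $y_\varepsilon(t)\le Ce^{C(T+2)}$ on $[0,T_{\varepsilon,0}]$. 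By \eqref{identityNRJeps} this yields the claimed uniform bound on $\|q'_{\varepsilon,0}\|_{C([0,T_{\varepsilon,0}])}$, choosing $\varepsilon_a$ small enough (e.g. $\varepsilon_a\le\varepsilon_1$) so that all the above manipulations are licit.
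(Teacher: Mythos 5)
Your energy framework, the observation that $F_{1,a}$ and $F_{1,b}$ are harmless, and the identification of the $\beta_\varepsilon'$ part of $\partial_t\mathcal A[q_{\varepsilon,0},g_\varepsilon]$ as the dangerous $\varepsilon^{-3/2}$ contribution are all correct and match the paper's starting point. But the resolution you propose — integrating $\beta_\varepsilon'$ by parts in time — does not close, for two reasons.

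First, the boundary term at the running time $t$, which you write as $\beta_\varepsilon(t)\,F_2(q_{\varepsilon,0}(t))\big[\mathcal A[q_{\varepsilon,0}(t),\overline g]\big]\cdot q'_{\varepsilon,0}(t)$, is of size $\varepsilon^{-1/2}\,|q'_{\varepsilon,0}(t)|$. Young's inequality turns this into $\tfrac12|q'_{\varepsilon,0}(t)|^2 + C\varepsilon^{-1}$, so the resulting inequality for $y_\varepsilon(t)=\big(\mathcal M_g+\mathcal M_a(q_{\varepsilon,0}(t))\big)q'_{\varepsilon,0}(t)\cdot q'_{\varepsilon,0}(t)$ carries an $\varepsilon^{-1}$ on the right-hand side and gives only $\|q'_{\varepsilon,0}\|\lesssim\varepsilon^{-1/2}$, which is not the claimed uniform bound. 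Second, substituting $q''_{\varepsilon,0}$ back from \eqref{renff} inside the integrated term reintroduces $F_2$ and hence $\beta_\varepsilon'$ again, so the argument is circular unless one already has control of the very quantity one is trying to bound.

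The ingredient that makes the estimate close, and that your proposal never invokes, is property \eqref{csR} of the control map $\overline g$ built in Proposition~\ref{farcontr}: the quantity $\Psi(q):=\int_{\partial\mathcal S(q)}\mathcal A[q,\overline g(q_0,-2v_0)]\,\partial_n\Phi(q,\cdot)\,d\sigma$ satisfies $\Psi(q_0)=0$. Since $q\mapsto\Psi(q)$ is Lipschitz on $\mathcal Q_\delta$ (Remark~\ref{RemReg}), this gives $|\Psi(q_{\varepsilon,0}(s))|\le C\,|q_{\varepsilon,0}(s)-q_0|$, which is the estimate \eqref{EstF2} in the paper. Then on $[0,2\varepsilon]$ the mean value theorem gives $|q_{\varepsilon,0}(s)-q_0|\le 2\varepsilon\sup|q'_{\varepsilon,0}|$, so the $\beta_\varepsilon'$ contribution to $\int_0^t F_2\cdot q'_{\varepsilon,0}$ is of order $\varepsilon^{-3/2}\cdot\varepsilon\sup|q'_{\varepsilon,0}|\cdot\varepsilon\sup|q'_{\varepsilon,0}|=\varepsilon^{1/2}\sup|q'_{\varepsilon,0}|^2$, which is self-absorbing for $\varepsilon$ small. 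No integration by parts is needed; \eqref{csR} plus the proximity $|q_{\varepsilon,0}-q_0|=O(\varepsilon\sup|q'_{\varepsilon,0}|)$ on the active interval is exactly the reason the lemma holds, and in fact is the reason the condition \eqref{csR} was imposed on $\overline g$ in the first place. If you wish to keep the integration-by-parts route, you would still have to invoke $\Psi(q_0)=0$ to show that the boundary term is $O(\varepsilon^{-1/2}\cdot\varepsilon\sup|q'_{\varepsilon,0}|\cdot|q'_{\varepsilon,0}(t)|)=O(\varepsilon^{1/2}\sup|q'_{\varepsilon,0}|^2)$; without it the approach fails.
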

\begin{proof}
First we see that
the mappings   
$$
q \mapsto F_{1,a} (q) [\mathcal A[q  , \overline{g}(q_0, v)  ]    ]  \text{ and } 
q \mapsto  F_{1,b} (q,\cdot) [ \mathcal A[q  , \overline{g}(q_0, v)  ]   ]
$$
are bounded  for  $q$ in $\mathcal{Q}_\delta$, uniformly for $v$ in bounded sets of $\mathbb \R^3$.
   %hence the term $F_{1,b} (q_{\varepsilon,0},q_{\varepsilon,0}', 0) \big[   \mathcal A[q_{\varepsilon,0} ,g_{\varepsilon} ] \big]$ remains uniformly Lipschitz with respect to $(q_{\varepsilon,0},q'_{\varepsilon,0})$ during this time interval. It is not difficult to see that $F_{1,a} (q_{\varepsilon,0},q_{\varepsilon,0}', 0) \big[   \mathcal A[q_{\varepsilon,0} ,g_{\varepsilon} ] \big]$ is uniformly bounded in $L^1([0,T_{\varepsilon,0}])$. 
Let us now focus on the $F_2$ term. 
For $t$ in $[0,2 \varepsilon]$, 
$g_{\varepsilon} (t) = \beta_{\varepsilon} (t) \overline{g} (q_0, -2v_0)  $ 
so that, by  the chain rule, for $t$ in $[0,\min(2 \varepsilon,T_{\varepsilon,0}) ]$,
$$\partial_t  \mathcal A[q_{\varepsilon,0} ,g_{\varepsilon} ] = 
\beta_{\varepsilon}   D_q  \mathcal A[q_{\varepsilon,0} , \overline{g}(q_0, -2 v_0)   ]  \cdot q'_{\varepsilon,0} 
+ \beta'_{\varepsilon}  \mathcal A[q_{\varepsilon,0} , \overline{g}(q_0, -2 v_0) ].$$
For what concerns $F_2$ we have, using the property \eqref{csR},
\begin{multline*} % \label{derestni}
F_2  (q_{\varepsilon,0}) \big[\partial_t  \mathcal A[q_{\varepsilon,0} ,g_{\varepsilon} ]  \big]
=  \beta_{\varepsilon}  \int_{ \partial \mathcal{S} (q_{\varepsilon,0})} \Big(D_q  \mathcal A[q_{\varepsilon,0} , \overline{g}(q_0, -2 v_0)   ]  \cdot q'_{\varepsilon,0}\Big) \,  \partial_n \Phi(q_{\varepsilon,0},\cdot) \, d \sigma \\ 
+ \beta'_{\varepsilon}  \Big( \int_{ \partial \mathcal{S} (q_{\varepsilon,0})} \mathcal A[q_{\varepsilon,0} , \overline{g}(q_0, -2 v_0) ]
  \,  \partial_n \Phi(q_{\varepsilon,0},\cdot) \, d \sigma 
-  \int_{ \partial \mathcal{S} (q_{0})} \mathcal A[q_{0} ,\overline{g}(q_0, -2 v_0) ]
\,  \partial_n \Phi(q_{0},\cdot) \, d \sigma \Big) .
\end{multline*}
%
% where the last term above in the parenthesis is added  in order to obtain a better estimate by a  Lipschitz argument, see
%  below. This is licit because  it vanishes according to the property  \eqref{csR}.

Using that the mapping 
$q \mapsto  \int_{ \partial \mathcal{S} (q)} \nabla_q \mathcal A[q  , \overline{g}(q_0, v)  ]  \otimes    \partial_n \Phi(q,\cdot) \, d \sigma$
 is  bounded  for $q$ over $\mathcal{Q}_\delta$ 
and that  the mapping 
 $q \mapsto  \int_{ \partial \mathcal{S} (q)}  \mathcal A[q  , \overline{g}(q_0, v)  ]   \,  \partial_n \Phi(q,\cdot) \, d \sigma$
 is  Lipschitz with respect to $q$ in $\mathcal{Q}_\delta$, both uniformly 
   for $v$ in bounded sets of $\mathbb \R^3$, 
 we see that this involves (recalling the expression of $\beta_{\varepsilon} $ given at the beginning of Section \ref{Illu})
\begin{equation} \label{EstF2}
\big| F_2  (q_{\varepsilon,0}) \big[\partial_t  \mathcal A[q_{\varepsilon,0} ,g_{\varepsilon} ]  \big] \big|
\lesssim
C \left(  \frac{1}{\varepsilon^{1/2}} |q'_{\varepsilon,0}| +
\frac{1}{\varepsilon^{3/2}} |q_{\varepsilon,0} - q_{0}|  \right),
\end{equation}
 uniformly  for  $ ( {q}_1 , {q}'_1 ) $ in $ \overline{B}\big( (q_0 , q'_0), r_2  \big)$.
Then, multiplying  \eqref{renff}  by $q'_{\varepsilon,0}$ and 
using once more the identity  \eqref{identityNRJ}, we obtain, for any $\eps$ in $(0,1)$, for $t$ in $[0,\min(2 \varepsilon,T_{\varepsilon,0}) ]$,
\begin{multline} \label{EEs}
 \Big(\mathcal{M}_g+ \mathcal{M}_a(q_{\varepsilon,0} (t))\Big) q_{\varepsilon,0}' (t) \cdot q_{\varepsilon,0}'  (t)  = 
 \Big(\mathcal{M}_g+\mathcal{M}_a( q_0)\Big) {q}'_0 \cdot  {q}'_0 
 \\  +  2 \int_0^{t} \Big(     F_{1,a} (q_{\varepsilon,0}) \big[   \mathcal A[q_{\varepsilon,0} ,g_{\varepsilon} ] \big]  +  F_{1,b} (q_{\varepsilon,0},q_{\varepsilon,0}') \big[   \mathcal A[q_{\varepsilon,0} ,g_{\varepsilon} ] \big]  + F_2 (q_{\varepsilon,0}) \big[\partial_t  \mathcal A[q_{\varepsilon,0} ,g_{\varepsilon} ]  \big]\Big) \cdot  q' _{\varepsilon,0}     ,
\end{multline}
Then, using \eqref{identityNRJeps}, the boundedness of 
the mappings    $ q \mapsto F_{1,a} (q) [\mathcal A[q  , \overline{g}(q_0, v)  ]    ]  $ and 
 $q \mapsto  F_{1,b} (q,\cdot) [ \mathcal A[q  , \overline{g}(q_0, v)  ]   ] $ already mentioned above, the definition of $\beta_{\varepsilon}$ and the bound  \eqref{EstF2}, we get 
\begin{equation*}
  |q_{\varepsilon,0}' (t)  |^2 \leq C \left(
1  +  \frac{1}{\varepsilon^{1/2}} \int_0^{t}   |q_{\varepsilon,0}' (s)  |^2 \, ds  
 +  \frac{1}{\varepsilon^{3/2}}  \int_0^{t}  |q'_{\varepsilon,0} (s)|  |q_{\varepsilon,0} (s)- q_{0}|  \, ds  \right).
\end{equation*}
Then using the mean value theorem and that $t \leq 2 \varepsilon$, we have that 
\begin{equation*}
  |q_{\varepsilon,0}' (t)  |^2 \leq C  
\left(1  +  \varepsilon^{1/2}  \sup_{[0,\min(2 \varepsilon,T_{\varepsilon,0}) ]} \,    |q_{\varepsilon,0}'  |^2  \right) ,
\end{equation*}
so that for $\varepsilon$ small enough, and for $t$ in $[0,\min(2 \varepsilon,T_{\varepsilon,0}) ]$,
$  |q_{\varepsilon,0}' (t)  | \leq C  $,  uniformly  for  $ ( {q}_1 , {q}'_1 ) $ in $ \overline{B}\big( (q_0 , q'_0), r_2  \big)$.
As a consequence of the usual blow-up criterion for ODEs, we have that  $T_{\varepsilon,0} \geq 2 \varepsilon$.

During the next phase, i.e. for $t$ in  $[2 \varepsilon,T - 2 \varepsilon]$, 
the control is inactive so that  the equation  \eqref{renff}  is a geodesic equation. 
Then by a simple energy estimate we get again that $  |q_{\varepsilon,0}' (t)  | \leq C  $ on $[0,\min(T-2 \varepsilon,T_{\varepsilon,0}) ]$.

Finally if $T_{\varepsilon,0} \geq T-2 \varepsilon$, then 
we deal with  the last phase as in the first phase. 
This concludes the proof of Lemma~\ref{bddtrajg}. 
\end{proof}

We then conclude the proof of Proposition \ref{apr2} by a classical comparison argument using Gronwall's lemma and the Lipschitz regularity with respect to $q$ 
of the various mappings involved ($ \mathcal{M}_a$, $\Gamma$, $ F_{1,a} $, $ F_{1,b} $ and $F_2$).
This allows to prove that there exists $\varepsilon_2$ in $(0,\varepsilon_1]$ such that for any $\varepsilon \in (0,\varepsilon_2]$,   
$T_{\varepsilon,0} \geq T$ and $\|\tilde{q}_{\varepsilon}-q_{\varepsilon,0}\|_{C^1([0,T])} \rightarrow 0$ when ${\varepsilon\to0^{+}}$,  
 uniformly  for  $ ( {q}_1 , {q}'_1 ) $ in $ \overline{B}\big( (q_0 , q'_0), r_2  \big)$.
This ends the proof of Proposition \ref{apr2}. 
\subsection{Proof of Proposition \ref{gimpact}}\label{SS34}

First we may extend Lemma \ref{bddtrajg} to the solutions $q_{\varepsilon,\gamma}$ to (\ref{renformEPS}) in the following manner.

\begin{lemma}\label{bddtrajg++}
There exists $\varepsilon_b$ in $(0,\varepsilon_2)$ such that 
 $\|q'_{\varepsilon,\gamma}\|_{C([0,T_{\varepsilon,\gamma}])}$ is bounded uniformly in $\varepsilon\in(0,\varepsilon_b]$,
for any  $\gamma\in[-1,1]$, and 
for $(q_1,q'_1) \in \overline{B}((q_0,q'_0),r_2)$.
\end{lemma}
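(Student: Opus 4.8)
\textbf{Proof proposal for Lemma~\ref{bddtrajg++}.}
The plan is to repeat the energy-estimate argument of Lemma~\ref{bddtrajg}, now keeping the circulation terms $\gamma^2 E(q_{\varepsilon,\gamma})$, $\gamma\, q_{\varepsilon,\gamma}'\times B(q_{\varepsilon,\gamma})$ and the extra contribution $\gamma F_{1,c}(q_{\varepsilon,\gamma})[\mathcal A[q_{\varepsilon,\gamma},g_\varepsilon]]$ coming from the decomposition $F_1 = F_{1,a}+F_{1,b}+\gamma F_{1,c}$, and observing that for $|\gamma|\le 1$ these are harmless. First I would recall that on $[0,T_{\varepsilon,\gamma}]$ the trajectory $q_{\varepsilon,\gamma}$ stays in $B(q_0,r_1)\subset\mathcal{Q}_\delta$, so by Remark~\ref{RemReg} the fields $E$, $B$, $F_{1,c}(q)[\mathcal A[q,\overline g(q_0,v)]]$ are all bounded on this ball uniformly for $v$ in bounded sets; in particular, using the explicit form \eqref{gcontrol} of $g_\varepsilon$ together with the bounds on $v_0,v_1$ from \eqref{defv} (which are uniform for $(q_1,q'_1)\in\overline{B}((q_0,q'_0),r_2)$), the term $F_{1,c}$ contributes a factor $\beta_\varepsilon^2(\cdot)$ (or $\beta_\varepsilon^2(T-\cdot)$) times a bounded quantity.

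Then I would multiply \eqref{renformEPS} by $q_{\varepsilon,\gamma}'$, use the identity \eqref{identityNRJ} to turn the left side into the derivative of the kinetic energy, and integrate in time. The new terms on the right produce, upon integration against $q_{\varepsilon,\gamma}'$: a contribution $|\gamma|^2\int_0^t |E(q_{\varepsilon,\gamma})|\,|q_{\varepsilon,\gamma}'|$ and $|\gamma|\int_0^t |B(q_{\varepsilon,\gamma})|\,|q_{\varepsilon,\gamma}'|^2$, both bounded by $C\int_0^t(1+|q_{\varepsilon,\gamma}'|^2)$ for $|\gamma|\le 1$, plus the $\gamma F_{1,c}$ term which, exactly as for the $F_{1,a}$ term in Lemma~\ref{bddtrajg}, contributes $C\varepsilon^{-1/2}\int_0^t |q_{\varepsilon,\gamma}'|^2$ on $[0,\min(2\varepsilon,T_{\varepsilon,\gamma})]$ after using $\int_0^{2\varepsilon}\beta_\varepsilon^2 = 1$ and the mean value theorem. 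The $F_2$ term is estimated exactly as in \eqref{EstF2} — its estimate did not use $\gamma$ at all — and the $F_{1,a}$, $F_{1,b}$ terms are treated verbatim. Combining via \eqref{identityNRJeps} and the mean value theorem (using $t\le 2\varepsilon$) yields, for $\varepsilon$ small, $|q_{\varepsilon,\gamma}'(t)|\le C$ on $[0,\min(2\varepsilon,T_{\varepsilon,\gamma})]$ uniformly in $|\gamma|\le 1$ and in $(q_1,q'_1)$, and hence by the blow-up criterion $T_{\varepsilon,\gamma}\ge 2\varepsilon$. On the middle interval $[2\varepsilon,T-2\varepsilon]$ the control is inactive, so \eqref{renformEPS} reduces to \eqref{tout}, whose kinetic energy is conserved; thus the bound propagates, and reducing $\varepsilon$ if necessary one reaches $[0,\min(T-2\varepsilon,T_{\varepsilon,\gamma})]$; the final interval $[T-2\varepsilon,T]$ is handled as the first. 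Choosing $\varepsilon_b\in(0,\varepsilon_2)$ small enough gives the claim.

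I do not expect a genuine obstacle here: the lemma is a routine robustness statement, the only mildly delicate point being to check that the $\varepsilon$-dependent blow-up (the $\varepsilon^{-1/2}$ and $\varepsilon^{-3/2}$ factors) comes only from $F_{1,a}$ and $F_2$ acting on $g_\varepsilon$, while all the $\gamma$-dependent terms carry no negative power of $\varepsilon$ and are uniformly bounded for $|\gamma|\le 1$, so that the Gronwall constant can be taken independent of $\gamma$. One should also note that at this stage $\gamma_0$ is not yet being chosen — only the crude range $|\gamma|\le 1$ is used — the genuine smallness of $|\gamma|$ will be exploited afterwards in the comparison estimate proving $\|q_{\varepsilon,\gamma}-q_{\varepsilon,0}\|_{C^1[0,T]}\to 0$.
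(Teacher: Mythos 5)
Your argument is correct and follows the same energy-estimate/Gronwall strategy as the paper, which records the proof in a single line: add the bounded term $\gamma^2 E$ to the energy identity~\eqref{EEs} and note that $B$ contributes nothing since $(q_{\varepsilon,\gamma}'\times B)\cdot q_{\varepsilon,\gamma}'\equiv 0$. You replace this exact cancellation with the crude bound $|\gamma|\,|B|\,|q_{\varepsilon,\gamma}'|^2$, which also closes the Gronwall inequality for $|\gamma|\le 1$, and you are in fact more complete than the paper in explicitly treating the $\gamma F_{1,c}$ part of $F_1$, which the paper's one-line proof omits. Two bookkeeping imprecisions, neither of which affects the conclusion: the $\varepsilon^{-1/2}$ factor in the proof of Lemma~\ref{bddtrajg} originates from $F_{1,b}$ (linear in $\alpha$ and in $q'$, hence carrying a weight $\beta_\varepsilon\sim\varepsilon^{-1/2}$), not from $F_{1,a}$ whose $\beta_\varepsilon^2$ weight integrates to $1$ over $[0,2\varepsilon]$; likewise $\gamma F_{1,c}$, being linear in $\alpha$, carries a single $\beta_\varepsilon$ and upon integration against $|q_{\varepsilon,\gamma}'|$ gives a term of order $|\gamma|\,\varepsilon^{1/2}\sup|q_{\varepsilon,\gamma}'|$ rather than $\varepsilon^{-1/2}\int_0^t|q_{\varepsilon,\gamma}'|^2$. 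Finally, on the middle interval the reduction is to~\eqref{tout}, not to the geodesic equation, and the kinetic energy is not exactly conserved when $\gamma\neq 0$ because of the $\gamma^2 E\cdot q_{\varepsilon,\gamma}'$ term; but $E$ is bounded on $\mathcal{Q}_\delta$ and $(q_{\varepsilon,\gamma}'\times B)\cdot q_{\varepsilon,\gamma}'=0$, so Gronwall propagates the bound just as well.
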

It is indeed a matter of adding the ``electric field'' $E$ in \eqref{EEs}, and noting that $E$ is bounded on $Q_{\delta}$; the ``magnetic field'' $B$ gives no contribution to the energy. \par
\ \par
We now finish the proof of Proposition \ref{gimpact}.
Using a comparison argument we obtain that
there exists $\varepsilon_3$ in $(0,\varepsilon_b]$  such that 
for all $\varepsilon \in (0,\varepsilon_{3}]$, 
 there exists $\gamma_0 >0$  
 such that 
for any  $\gamma\in[-\gamma_0,\gamma_0]$,  we have $T_{\varepsilon,\gamma}\geq T$ and
$\|q_{\varepsilon,\gamma}-q_{\varepsilon,0}\|_{C^1 [0,T] }   $  converges to $0$ 
  when $ \gamma \to 0$,  
 uniformly for  $ ( {q}_1 , {q}'_1 ) $ in $ \overline{B}\big( (q_0 , q'_0), r_2  \big)$.
This concludes the proof of Proposition~\ref{gimpact}.
%
%
%
%
%%%%%%%%%%%%%%%%%%%%%%%%%%%%%%%%%%%%%%%%%%%%%%%%%%%%%%%%%%%%%%%%%%%%%%%%%%%%%%%%%%%%%%%%%%%%%%%%%%%%%%%%%
%
%
%
%
\section{Design of the control according to the solid position. Proof of Proposition \ref{farcontr}}
\label{S4}

This section is devoted to the proof of Proposition \ref{farcontr}.
\subsection{The case of a homogeneous disk}
Before proving Proposition~\ref{farcontr} we establish the following similar result concerning the simpler case where the solid is a homogeneous disk.
In that case, the statement merely considers $q$ of the form $q=(h,0)$.
Thus in order to simplify the writing, we introduce
\begin{equation*}
{\mathcal Q}_{\delta}^h :=\{ h \in \R^2 \    \text{ such that }  \ (h,0) \in {\mathcal Q}_{\delta} \}.
\end{equation*}
Also in all this section when we will write $q$, it will be understood that $q$ is associated with $h$ by $q=(h,0)$.
\begin{proposition} \label{dfarcontr}
Let $\delta>0$. Then there exists a continuous mapping $\overline{g}: {\mathcal Q}_{\delta}^h  \times \mathbb R^2  \rightarrow {\mathcal C}$ such that the function $\overline{\alpha} := \mathcal A [q,\overline{g} (q,v)]$ in $C^\infty (\overline{\mathcal{F}(q)};\mathbb{R})$  satisfies:
\begin{gather}
\label{harm}
\Delta \overline{\alpha}(q,x)=0   \text{ in } \mathcal{F}(q),  \text{ and } \partial_{n}\overline{\alpha}(q,x)=0
\text{ on } \partial\mathcal{F}(q)\setminus\Sigma, \\
\label{hsz}
\int_{ \partial \mathcal{S} (q)}  |\nabla\overline{\alpha}(q,x)|^{2} \, n \, d \sigma  = v, \\
\label{cs}
\int_{\partial\mathcal{S}(q)} \overline{\alpha}(q,x) \, n \, d\sigma = 0 .
\end{gather}
\end{proposition}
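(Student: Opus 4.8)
The plan is to reduce the proposition, by an explicit complex‑analytic computation adapted to the circular boundary, to the \emph{exact} solvability of a small algebraic system, which I would in turn obtain from an \emph{approximate} version via a Runge‑type density argument and then upgrade to exact solvability by a finite‑dimensional implicit‑function argument, all performed uniformly and continuously in $q$. Throughout, $\mathcal{S}(q)=\overline{B(h,\rho)}$ with $\rho$ the (fixed) radius.

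First, the complex‑analytic reduction near the disk. By Definition~\ref{defA} any $\overline\alpha=\mathcal A[q,g]$ with $g\in\mathcal C$ already satisfies \eqref{harm}, and its flux through $\partial\mathcal{S}(q)$ vanishes; hence in the annulus $\{\rho<|z-h|<\rho+\delta\}\subset\mathcal{F}(q)$ it expands on the Laurent harmonics adapted to the homogeneous Neumann condition on the circle,
\[
\overline\alpha=c_{0}+\sum_{k\geq1}\Bigl(s_{k}\,\operatorname{Re}\bigl[(z-h)^{k}+\rho^{2k}(z-h)^{-k}\bigr]+t_{k}\,\operatorname{Im}\bigl[(z-h)^{k}-\rho^{2k}(z-h)^{-k}\bigr]\Bigr).
\]
Writing $\zeta_{k}\in\mathbb{C}$ for the $k$‑th Fourier coefficient of $\theta\mapsto\overline\alpha(h+\rho e^{i\theta})$, and using that $\partial_{n}\overline\alpha=0$ on $\partial\mathcal{S}(q)$ forces $|\nabla\overline\alpha|^{2}=\rho^{-2}|\partial_{\theta}\overline\alpha(h+\rho e^{i\theta})|^{2}$ there, an elementary integration (identifying $\R^{2}\simeq\mathbb{C}$, outward normal $\simeq\pm e^{i\theta}$) gives
\[
\int_{\partial\mathcal{S}(q)}\overline\alpha\,n\,d\sigma=\pm\pi\rho\,\zeta_{1},\qquad\int_{\partial\mathcal{S}(q)}|\nabla\overline\alpha|^{2}\,n\,d\sigma=\pm\frac{\pi}{\rho}\sum_{k\geq1}k(k+1)\,\overline{\zeta_{k}}\,\zeta_{k+1}.
\]
Thus \eqref{cs} amounts to $\zeta_{1}=0$, and \eqref{hsz} is achieved by any $\overline\alpha$ with $\zeta_{1}=0$, $\zeta_{2}=1$, $\zeta_{3}$ a fixed nonzero multiple $\kappa v$ of $v$, and $\zeta_{k}=0$ for $k\geq 4$; such boundary data are realized exactly by an explicit ``model'' harmonic function $w=w^{q,v}$ on $\R^{2}\setminus\{h\}$ built from the three corresponding Laurent harmonics. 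The difficulty is that $w$ is not of the form $\mathcal A[q,g]$, since $\partial_{n}w\neq0$ on $\partial\Omega\setminus\Sigma$ in general.

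Next, a Runge‑type correction. I would use that the traces on $\partial\mathcal{S}(q)$ of $\{\mathcal A[q,g]:g\in\mathcal C\}$ are dense, in $C^{1}(\partial\mathcal{S}(q))$, among traces of functions harmonic in $\mathcal{F}(q)$ with vanishing normal derivative on $\partial\mathcal{S}(q)$: by Hahn--Banach it suffices to show that a measure $\mu$ on $\partial\mathcal{S}(q)$ annihilating all $\mathcal A[q,g]$ annihilates $w$, and solving a dual Neumann problem with source $\mu$ produces a harmonic function whose Cauchy data vanish on $\partial\Omega\setminus\Sigma$ (because $g$ ranges over all of $\mathcal C$), hence which vanishes identically by Holmgren's uniqueness theorem, so $\langle\mu,w\rangle=0$; interior elliptic estimates turn $C^{0}$‑ into $C^{1}$‑closeness on $\partial\mathcal{S}(q)$. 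Since $\mathcal{F}(q)$ depends continuously on $q$, controls that approximate well at $q_{0}$ still do so nearby, so a partition‑of‑unity gluing (the construction being local around the solid) produces finitely many controls $g_{1}^{q},\dots,g_{6}^{q}\in\mathcal C$, continuous in $q$, whose images $\overline\alpha_{j}=\mathcal A[q,g_{j}^{q}]$ have traces on $\partial\mathcal{S}(q)$ as $C^{1}$‑close as desired — uniformly on compact subsets of $\mathcal Q^{h}_{\delta}$ — to $\cos\theta,\sin\theta,\cos2\theta,\sin2\theta,\cos3\theta,\sin3\theta$.

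Finally, passing from approximate to exact with continuity. For $\lambda\in\R^{6}$ set $\overline\alpha_{\lambda}=\mathcal A[q,\textstyle\sum_{j}\lambda_{j}g_{j}^{q}]=\sum_{j}\lambda_{j}\overline\alpha_{j}$. Then \eqref{cs} reads $\zeta_{1}(\overline\alpha_{\lambda})=0$, a linear equation with $(\lambda_{1},\lambda_{2})$‑block close to the identity; solving it by the implicit function theorem expresses $(\lambda_{1},\lambda_{2})$ linearly in $\mu:=(\lambda_{3},\lambda_{4},\lambda_{5},\lambda_{6})$, so that on this constrained set $\overline\alpha_{\lambda}$ is linear homogeneous in $\mu$ and the left side of \eqref{hsz} becomes a homogeneous quadratic map $Q_{q}:\R^{4}\to\R^{2}\simeq\mathbb{C}$ which, by the formula above, is close coefficientwise — uniformly in $q$ — to $\mu\mapsto\kappa^{-1}\,\overline{(\lambda_{3}+i\lambda_{4})}\,(\lambda_{5}+i\lambda_{6})$ with $\kappa\neq0$. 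The unperturbed map $(w_{1},w_{2})\mapsto\overline{w_{1}}w_{2}$ restricted to $\{w_{1}=1\}$ is a linear isomorphism onto $\mathbb{C}$, so by the implicit function theorem (or a degree argument, for surjectivity on a fixed neighborhood) the same holds for $Q_{q}$ on a slice, uniformly for $q$ in a compact set and $v$ on the unit circle; homogeneity of degree $2$ then yields a solution $\mu=\mu(q,v)$ of $Q_{q}(\mu)=v$ for every $v$, continuous in $(q,v)$ and with $\mu(q,v)\to0$ as $v\to0$. Setting $\overline g(q,v):=\sum_{j}\lambda_{j}(q,v)\,g_{j}^{q}$ gives the required continuous map. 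I expect the hard part to be the Runge/Holmgren density statement of the third paragraph and, above all, making it — together with the implicit‑function argument — hold \emph{uniformly} over compact subsets of $\mathcal Q^{h}_{\delta}$ with continuous dependence on $q$; the complex‑analytic identities are routine Fourier/Laurent bookkeeping. (Proposition~\ref{farcontr}, where $\partial\mathcal{S}(q)$ is not a circle, follows the same scheme with the explicit Laurent harmonics replaced by a conformal uniformization of $\mathcal{F}(q)$.)
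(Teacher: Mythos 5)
Your proposal is essentially correct and shares the overall architecture of the paper's proof: both work with explicit harmonic functions (Laurent series) near the circular boundary, both extend to the whole fluid domain via a Runge-type density argument to enforce the homogeneous Neumann condition on $\partial\Omega\setminus\Sigma$, both obtain continuity in $q$ by compactness of $\overline{\mathcal{Q}_\delta^h}$ and a partition of unity, and both upgrade approximate to exact solvability by an inverse-function or degree argument together with degree-two homogeneity of \eqref{hsz}. Where you genuinely diverge is in how the quadratic map $\alpha\mapsto\int_{\partial\mathcal{S}(q)}|\nabla\alpha|^{2}\,n\,d\sigma$ is made surjective. You exploit the exact Fourier algebra on the circle: the identity $\int_0^{2\pi}|\partial_\theta f|^2 e^{i\theta}\,d\theta \propto \sum_{k\ge1}k(k+1)\,\overline{\zeta_k}\zeta_{k+1}$ couples only \emph{adjacent} modes, so after fixing $\zeta_1=0$ (which is exactly \eqref{cs}), $\zeta_2\approx 1$ and letting $\zeta_3$ vary produces a map close to an $\mathbb{R}$-linear isomorphism of $\mathbb{R}^2$; six basis controls suffice. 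The paper instead (Lemmas~\ref{kup1}, \ref{LemBase}, \ref{Lem3emeCond}) chooses bump functions with \emph{disjoint supports} on $\partial\mathcal{S}(q_0)$ so that the cross-terms $\int\nabla\tilde\alpha^{i,j}\cdot\nabla\tilde\alpha^{k,l}\,n\,d\sigma$ are asymptotically diagonal, writes $v$ as a positive conical combination $\sum_i\mu_i(v)e_i$, and takes square roots $\sqrt{\mu_i(v)}$; it uses nine basis functions and a separate linear-dependence trick (Lemma~\ref{Lem3emeCond}) to impose \eqref{cs}. Your adjacent-mode route is more economical (no conical hull, no square roots, fewer basis elements) but is tied to the exact Fourier orthogonality of the circle; the paper's disjoint-support construction is genuinely local in physical space, which is precisely what lets it transfer with only a conformal reparameterization to the non-circular case of Proposition~\ref{farcontr}, whereas your scheme would have to re-derive the adjacent-mode coupling after conformal uniformization, where the Jacobian weight $|{\det}(D\Psi)|^{-1/2}$ spoils the clean orthogonality. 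Finally, your Runge/Holmgren density sketch is exactly the paper's Lemma~\ref{LemmeEta} (cited there from \cite{OG-Addendum}); you are right that this and its uniformity in $q$ are the technical crux, and the paper handles them just as you propose.
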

In order to prove Proposition~\ref{dfarcontr}, the mapping $\overline{g}$ will be constructed using a combination of some elementary functions which we introduce in several lemmas.

To begin with, we will make use of the elementary geometrical property that $ \{n (q_0,x):\ x\in\partial\mathcal{S}(q_0)\}$ is the unit circle $\mathbb S^1$ and of the following lemma.
\begin{lemma} \label{kup1}
There exist three vectors $e_1, e_{2}, e_{3} \in \{n (q_0,x):\ x\in\partial\mathcal{S}(q_0)\}$
and positive $C^\infty$ maps $(\mu_i)_{1\leqslant i \leqslant 3} : \R^{2} \rightarrow \R_{+}$
such that for any $v \in \R^{2}$,
\begin{equation} \label{SommeDesMu}
\sum_{i=1}^3 \mu_i (v) e_i = v.
\end{equation}
\end{lemma}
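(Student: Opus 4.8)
The statement to prove is Lemma \ref{kup1}: there exist three unit vectors $e_1,e_2,e_3$ lying on the unit circle $\mathbb S^1 = \{n(q_0,x):x\in\partial\mathcal{S}(q_0)\}$ and positive smooth maps $\mu_i:\R^2\to\R_+$ so that $\sum_{i=1}^3\mu_i(v)e_i = v$ for all $v\in\R^2$. The first step is to \emph{choose the three vectors}: pick any three vectors $e_1,e_2,e_3\in\mathbb S^1$ that are "positively spanning" $\R^2$, i.e. such that $0$ lies in the interior of their convex hull — for instance the three cube roots of unity $e_j = (\cos(2\pi j/3),\sin(2\pi j/3))$, which are all on the unit circle and hence all of the form $n(q_0,x)$ for some $x\in\partial\mathcal S(q_0)$ since that set equals $\mathbb S^1$. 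The point of positive spanning is that every $v\in\R^2$ can be written as a nonnegative combination of the $e_j$, but for the lemma we need the coefficients to be \emph{strictly} positive and \emph{smooth} in $v$.

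\emph{Construction of the $\mu_i$.} I would proceed as follows. Since $\{e_1,e_2\}$ is a basis of $\R^2$ (any two of the three are independent), write $v$ in that basis: $v = a_1(v)e_1 + a_2(v)e_2$ with $a_1,a_2$ linear, hence smooth, in $v$. Similarly, since $0$ is in the interior of the convex hull, there is a fixed relation $e_1+e_2+e_3 = 0$ for the cube-roots-of-unity choice (or more generally $c_1e_1+c_2e_2+c_3e_3=0$ with all $c_j>0$; for cube roots $c_j=1$). Then for any scalar $t$,
\begin{equation*}
v = (a_1(v)+t)e_1 + (a_2(v)+t)e_2 + t e_3,
\end{equation*}
using $e_1+e_2+e_3=0$. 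Now simply choose $t = t(v)$ to be a smooth positive function of $v$ that is large enough to make all three coefficients positive: e.g. $t(v) := 1 + |a_1(v)| + |a_2(v)|$ is smooth (the only issue is the absolute values, so instead take $t(v) := 1 + \sqrt{1+a_1(v)^2} + \sqrt{1+a_2(v)^2}$, which is $C^\infty$ since the radicands never vanish). Then set $\mu_1(v) := a_1(v)+t(v)$, $\mu_2(v) := a_2(v)+t(v)$, $\mu_3(v) := t(v)$; each is $C^\infty$ on $\R^2$, each is strictly positive by construction, and $\sum_i\mu_i(v)e_i = v$. This gives \eqref{SommeDesMu}.

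\emph{Where the real content is.} The lemma as isolated is almost trivial — the genuinely substantive input is the geometric fact that $\{n(q_0,x):x\in\partial\mathcal S(q_0)\}$ is the \emph{entire} unit circle, which holds because $\mathcal S(q_0)$ is a bounded simply connected domain with smooth boundary: the Gauss map of such a Jordan curve is surjective onto $\mathbb S^1$ (at the extreme point of $\mathcal S(q_0)$ in any given direction, the outward normal points in that direction). So the only thing to check carefully is that the three chosen directions are actually realized as outward normals, which is immediate from surjectivity of the Gauss map. The main obstacle, if any, is purely bookkeeping: one must make sure the $\mu_i$ are genuinely $C^\infty$ (hence the smoothing of the absolute value via $\sqrt{1+s^2}$) and strictly positive everywhere, not merely nonnegative, so that when Lemma \ref{kup1} is later used to build the control $\overline g$ by summing elementary pieces indexed by $i$, the quadratic dependence of $F_{1,a}$ on $\alpha$ (hence on $g$) does not cause sign cancellations. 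I do not anticipate a serious difficulty; this is a soft linear-algebra plus elementary-geometry argument and the proof above is essentially complete in outline.
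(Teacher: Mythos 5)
Your proof is correct and follows essentially the same strategy as the paper's: express $v$ in a two-element basis, then add a smooth, strictly dominating shift $t(v)$ (built from $\sqrt{1+\,\cdot^2}$ to avoid the non-smoothness of $|\cdot|$) along the null relation $\sum e_i = 0$ to force all three coefficients positive. Your choice of the cube roots of unity is in fact slightly cleaner than the paper's $e_1=(1,0)$, $e_2=(0,1)$, $e_3=(-1,-1)$, since the paper's $e_3$ is not a unit vector and so, strictly speaking, does not lie in $\{n(q_0,x):x\in\partial\mathcal{S}(q_0)\}=\mathbb{S}^1$ as the lemma statement requires (one should normalize $e_3$ and rescale $\mu_3$ by $\sqrt{2}$).
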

\begin{proof}
One may consider for instance $e_{1}:=(1,0)$, $e_{2}:=(0,1)$, $e_{3}:=(-1,-1)$, and
\begin{multline*}
\mu_{1}(v) = v_{1} +\sqrt{1 + |v_{1}|^2 + |v_{2}|^2}, \ \
\mu_{2}(v) = v_{2} +\sqrt{1 + |v_{1}|^2 + |v_{2}|^2} \\
\text{ and } \mu_{3}(v) =  \sqrt{1 + |v_{1}|^2 + |v_{2}|^2}.
\end{multline*}
\end{proof}

In the next lemma, we introduce some functions that are defined in a neighbourhood of $\partial\mathcal{S}(q_0)$ (for some $q_{0} = (h_{0},0)$ fixed), satisfying some counterparts of the properties \eqref{harm} and \eqref{hsz}.
\begin{lemma} \label{LemBase}
There exist families of functions $(\tilde{\alpha}_{\varepsilon}^{i,j})_{\varepsilon \in (0,1)}$, $i,j\in\{1,2,3\}$, such that 
 for any  $i,j\in\{1,2,3\}$, for any $\varepsilon \in (0,1)$, $\tilde{\alpha}_{\varepsilon}^{i,j}$ is defined and 
harmonic in a closed neighbourhood $ \mathcal{V}_{\varepsilon}^{i,j}$ of $\partial\mathcal{S}(q_0)$,
satisfies $\partial_{n} \tilde{\alpha}_{\varepsilon}^{i,j} =0$ on $\partial\mathcal{S}(q_0)$,
and moreover one has for any  $i,j,k,l$ in $\{1,2,3\}$, 
$$
\int_{\partial\mathcal{S}(q_0)} \nabla\tilde{\alpha}_{\varepsilon}^{i,j} \cdot \nabla\tilde{\alpha}_{\varepsilon}^{k,l} \,n\, d\sigma
\rightarrow  \delta_{(i,j),(k,l)} \, e_i   \quad \text{ as } \eps  \rightarrow 0^+ .
$$
\end{lemma}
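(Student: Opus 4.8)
The plan is to construct, for each pair $(i,j)$, a harmonic function near $\partial\mathcal{S}(q_0)$ whose normal derivative vanishes on $\partial\mathcal{S}(q_0)$ and which concentrates, in an appropriate sense, near the point $x_{i}\in\partial\mathcal{S}(q_0)$ where the outward normal equals $e_i$ (recall $\{n(q_0,x):x\in\partial\mathcal{S}(q_0)\}=\mathbb S^1$, so such $x_i$ exists). Since $\mathcal{S}(q_0)$ is a disk, the natural building blocks are explicit: after translating so that the disk is centered at the origin with radius $\rho$, the functions $\mathrm{Re}\,(c/z^{k})$ and $\mathrm{Im}\,(c/z^{k})$ are harmonic in a neighbourhood of the circle $\{|z|=\rho\}$ (indeed in $\{|z|>\rho-\epsilon\}$), and their \emph{normal} derivative on $\{|z|=\rho\}$ does not vanish; to kill the normal derivative one instead uses, for each integer $k\ge 1$, the harmonic function $z\mapsto\mathrm{Re}\big(c\,(z^{k}+\rho^{2k}z^{-k})\big)$ (and its imaginary-part analogue), which is exactly the harmonic extension across $\{|z|=\rho\}$ with zero normal derivative there — this is the 2D analogue of an even reflection. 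A finite linear combination of these, with $k$ up to some large $N=N(\varepsilon)$ and with carefully chosen coefficients, can be made to behave like a ``bump'' in tangential variable concentrated in an arc of size $\varepsilon$ around $x_i$; alternatively, and more robustly, one takes a Poisson-type kernel on the exterior of the disk with zero Neumann data to construct directly a harmonic function whose tangential gradient on $\partial\mathcal{S}(q_0)$ is an approximate identity (in $\theta$) centred at the angle $\theta_i$ corresponding to $x_i$.

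Concretely, I would parametrize $\partial\mathcal{S}(q_0)$ by $\theta\mapsto h_0+\rho(\cos\theta,\sin\theta)$, let $\theta_i$ be the angle with $(\cos\theta_i,\sin\theta_i)=e_i$, and seek $\tilde\alpha^{i,j}_\varepsilon$ of the form $\mathrm{Re}\big(f^{i,j}_\varepsilon(z-h_0)\big)$ with $f^{i,j}_\varepsilon$ holomorphic in an annular neighbourhood of $\{|z-h_0|=\rho\}$, chosen so that $\partial_n\tilde\alpha^{i,j}_\varepsilon=0$ on the circle and $\partial_\tau\tilde\alpha^{i,j}_\varepsilon(\theta)$ equals (a normalization of) a smooth approximate Dirac mass $\chi_\varepsilon(\theta-\theta_i)$. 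The zero-normal-derivative condition forces $f^{i,j}_\varepsilon(z)=g^{i,j}_\varepsilon(z)+\overline{g^{i,j}_\varepsilon(\rho^{2}/\bar z)}$-type symmetry, i.e. the coefficient of $z^{-k}$ is $\rho^{2k}$ times that of $z^{k}$; prescribing the tangential derivative then fixes the Fourier coefficients of $g^{i,j}_\varepsilon$ up to the truncation, and one checks the series converges in a (shrinking, as $\varepsilon\to0$) neighbourhood $\mathcal{V}^{i,j}_\varepsilon$ of the circle. With this choice, $\nabla\tilde\alpha^{i,j}_\varepsilon$ on $\partial\mathcal{S}(q_0)$ is purely tangential and is an approximate identity concentrated at $x_i$, so $\nabla\tilde\alpha^{i,j}_\varepsilon\cdot\nabla\tilde\alpha^{k,l}_\varepsilon$ is an approximate identity concentrated at $x_i$ when $(i,j)=(k,l)$ and is supported in two disjoint shrinking arcs (hence integrates to $o(1)$ against the bounded factor $n$) when $i\ne k$; the index $j$ is used only to produce, for fixed $i$, three ``copies'' with disjoint concentration scales or phases so that the $\delta_{(i,j),(k,l)}$ orthogonality also holds when $i=k$ but $j\ne l$ — e.g. concentrate $\tilde\alpha^{i,j}_\varepsilon$ at the slightly shifted point $x_i$ corresponding to angle $\theta_i+j\varepsilon^{1/2}$, so that different $j$'s live on disjoint arcs. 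Finally $\int_{\partial\mathcal{S}(q_0)}\nabla\tilde\alpha^{i,j}_\varepsilon\cdot\nabla\tilde\alpha^{i,j}_\varepsilon\,n\,d\sigma\to n(q_0,x_i)=e_i$ because the integrand is an approximate Dirac mass at $x_i$ with total mass normalized to $1$ and $n$ is continuous.

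The main obstacle I expect is twofold: first, simultaneously enforcing \emph{exactly} $\partial_n\tilde\alpha^{i,j}_\varepsilon=0$ on $\partial\mathcal{S}(q_0)$ and harmonicity in a genuine (two-sided) neighbourhood while controlling the tangential trace — this is where the explicit disk geometry is essential, since the reflection $z\mapsto\rho^2/\bar z$ makes the ``zero Neumann'' subspace explicit and one must verify that the truncated series still converges on a neighbourhood not shrinking too fast; and second, getting the limit to be exactly $\delta_{(i,j),(k,l)}e_i$ rather than merely $c_{i}\delta_{(i,j),(k,l)}e_i$ for some unknown constant $c_i>0$ — this requires normalizing each $\tilde\alpha^{i,j}_\varepsilon$ by an $\varepsilon$-dependent scalar so that $\|\partial_\tau\tilde\alpha^{i,j}_\varepsilon\|_{L^2(\partial\mathcal{S}(q_0))}^2$ (which equals $\int|\nabla\tilde\alpha^{i,j}_\varepsilon|^2$ on the boundary since the normal part vanishes) tends to $1$, which is a single scalar condition solvable by continuity/monotonicity in the scaling parameter. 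Once these normalizations are in place the convergence statement follows from the approximate-identity property together with the continuity of $x\mapsto n(q_0,x)$, and the lemma is proved.
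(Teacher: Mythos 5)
Your overall scheme matches the paper's proof: parametrize the disk boundary by angle, build a concentrated boundary profile near the point where $n=e_i$, and extend harmonically to an annular neighbourhood with zero Neumann data via the reflection-symmetric building blocks $z^k+\rho^{2k}z^{-k}$ (the paper writes this as a truncated Laurent series $\tfrac12\sum_{0<k\le K}\tfrac1k(r^k+r^{-k})(\cdots)$ built from the Fourier coefficients of a profile $\beta^{i,j}_\varepsilon$). Your device of shifted concentration points $\theta_i+j\sqrt\varepsilon$ to separate the index $j$ is the same as the paper's requirement that the $\beta^{i,j}_\varepsilon$ have pairwise disjoint, shrinking supports near $s_i$, and your closing observation that the normalization should force $\|\partial_\tau\tilde\alpha^{i,j}_\varepsilon\|_{L^2(\partial\mathcal{S}(q_0))}^2\to 1$ (not an $L^1$ normalization) is also the paper's.

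There is, however, a genuine gap in the way you describe the boundary profile. You require $\partial_\tau\tilde\alpha^{i,j}_\varepsilon(\theta)$ to equal a smooth approximate Dirac mass $\chi_\varepsilon(\theta-\theta_i)$, and later you call $\nabla\tilde\alpha^{i,j}_\varepsilon|_{\partial\mathcal{S}(q_0)}$ itself an ``approximate identity concentrated at $x_i$.'' No such profile can occur: for any single-valued $\tilde\alpha$ on a neighbourhood of the closed curve $\partial\mathcal{S}(q_0)$ one has
$\int_{\partial\mathcal{S}(q_0)}\partial_\tau\tilde\alpha\,d\sigma=0$,
whereas an approximate Dirac mass has strictly positive integral. (In the Laurent/reflection picture this is the statement that a nonzero mean of $\partial_\theta\tilde\alpha(1,\cdot)$ would force a multivalued $\theta$-term or a $\log r$ term, and $\partial_r\log r$ does not vanish on the circle, so either the zero-Neumann condition or single-valuedness fails.) The paper avoids this by imposing \emph{both}
$\int_0^{2\pi}\beta^{i,j}_\varepsilon(s)\,ds=0$ \emph{and} $\int_0^{2\pi}|\beta^{i,j}_\varepsilon(s)|^2\,n(q_0,c(s))\,ds\to e_i$,
which is perfectly consistent (take a rescaled odd bump $\beta_\varepsilon(\theta)=\varepsilon^{-1/2}\chi((\theta-\theta_i)/\varepsilon)$ with $\chi$ odd, $\|\chi\|_{L^2}=1$). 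It is the squared profile $|\beta^{i,j}_\varepsilon|^2$, not $\beta^{i,j}_\varepsilon$ itself, that behaves like an approximate identity, and the zero-mean constraint must be part of the construction; as stated, the profile you prescribe does not exist. This omission is fixable without changing the structure of the argument, but as written the construction fails at that step.
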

\begin{proof}
Without loss of generality, we may suppose that $\mathcal{S}(q_0)$ is the unit disk.
Consider the parameterisation $\{c(s)=(\cos(s),\sin(s)),\ s\in [0,2\pi]\}$ of $\partial\mathcal{S}(q_0)$
and the corresponding $s_i$ such that $n(q_0,c(s_i))=e_i,\ i\in\{1,2,3\}$. \par
We consider families of smooth functions $\beta_{\varepsilon}^{i,j}: [0,2\pi] \to \mathbb{R}$, $i,j\in\{1,2,3\}$, $\varepsilon \in (0,1)$, such that
$\text{supp } \beta_{\varepsilon}^{i,j} \, \cap \, \text{supp }\beta_{\varepsilon}^{k,l}=\emptyset$ whenever $(i,j)\neq(k,l)$, 
$\text{diam}\left(\text{supp }\beta_{\varepsilon}^{i,j}\right)\to 0$ as $\varepsilon\to0^{+}$,
\begin{equation*}
  \int_{0}^{2\pi}\beta_{\varepsilon}^{i,j}(s)\, d\sigma= 0   \, \text{  and } 
\left| \int_{0}^{2\pi} |\beta_{\varepsilon}^{i,j}(s)|^2 \, n(q_0,c(s)) ds-e_i \right| \to 0  \text{ as } \varepsilon \to 0^{+}.
\end{equation*}
Then we define $\tilde{\alpha}_{\varepsilon}^{i,j}$ in polar coordinates as the truncated Laurent series:
$$
\tilde{\alpha}_{\varepsilon}^{i,j}(r,\theta) := \frac12 \sum_{0<k\leq K} \frac{1}{k} \left(r^k+\frac{1}{r^k}\right)( -\hat{b}_{k,\varepsilon}^{i,j} \cos(k\theta) +\hat{a}_{k,\varepsilon}^{i,j}  \sin(k\theta)) , $$
where $\hat{a}_{k,\varepsilon}^{i,j}$ and $\hat{b}_{k,\varepsilon}^{i,j}$ denote the $k$-th Fourier coefficients of the function $\beta_{\varepsilon}^{i,j} $.
It is elementary to check that the function $\tilde{\alpha}_{\varepsilon}^{i,j}$ satisfies the required properties for an appropriate choice of $K$.
\end{proof}
Now, for any $h \in\mathcal{Q}^h_\delta$, we may define  $\mathcal{V}_{\varepsilon}^{i,j}(q):= \mathcal{V}_{\varepsilon}^{i,j}-h_0+h,$
which is a neighborhood of $\partial\mathcal{S}(q)$, 
and  $\tilde{\alpha}_{\varepsilon}^{i,j}(q,x):=\tilde{\alpha}_{\varepsilon}^{i,j}(x+h_0-h), \text{ for each } x\in \mathcal{V}_{\varepsilon}^{i,j}(q)$.
We have for $i,j,k,l$ in $\{1,2,3\}$, 
$$
\int_{\partial\mathcal{S}(q)}  \nabla\tilde{\alpha}_{\varepsilon}^{i,j}  (q,x) \cdot \nabla\tilde{\alpha}_{\varepsilon}^{k,l} (q,x)  \, n(q,x) \, d\sigma
=
\int_{\partial\mathcal{S}(q_0)} \nabla\tilde{\alpha}_{\varepsilon}^{i,j} (x)  \cdot \nabla\tilde{\alpha}_{\varepsilon}^{k,l} (x)  \, n(q_0,x) \, d\sigma .
$$
Proceeding as in \cite{OG-Addendum} (see also \cite[p. 147-149]{Cortona}) and relying in particular Runge's theorem, we have the following result which asserts the existence of harmonic approximate extensions on the whole fluid domain.
\begin{lemma} \label{LemmeEta}
There exists a family of functions $(\alpha_{\eta}^{i,j})_{\eta \in (0,1)}$, $i,j \in \{1,2,3\}$,  harmonic in $\mathcal{F}(q)$, 
satisfying  $\partial_n \alpha_{\eta}^{i,j}(q,x)=0$ on  $\partial\mathcal{F}(q)\setminus \Sigma$, with for any $k$ in $\mathbb N$, 
\begin{equation} \label{del2new}
\| \alpha_{\eta}^{i,j}(q,\cdot) - \tilde{\alpha}_{\varepsilon}^{i,j}(q,\cdot) \|_{C^k(\mathcal{V}_{\varepsilon}^{i,j}(q)\cap\overline{\mathcal{F}(q)})} 
\rightarrow   0     \text{ when } \eta \rightarrow  0^+ .
\end{equation}
\end{lemma}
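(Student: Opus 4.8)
The plan is to prove Lemma~\ref{LemmeEta} by combining the local harmonic data $\tilde{\alpha}_{\varepsilon}^{i,j}$ furnished by Lemma~\ref{LemBase} with a Runge-type approximation argument that extends these functions harmonically across the whole fluid domain, while respecting the homogeneous Neumann condition on $\partial\mathcal{F}(q)\setminus\Sigma$. First I would fix $q$ and fix the index pair $(i,j)$ and the scale $\varepsilon$, so that $\tilde{\alpha}:=\tilde{\alpha}_{\varepsilon}^{i,j}(q,\cdot)$ is harmonic on a closed neighbourhood $\mathcal{V}:=\mathcal{V}_{\varepsilon}^{i,j}(q)$ of $\partial\mathcal{S}(q)$ and satisfies $\partial_n\tilde{\alpha}=0$ on $\partial\mathcal{S}(q)$. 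The goal is an $\alpha_\eta$ harmonic on all of $\mathcal{F}(q)$, with $\partial_n\alpha_\eta=0$ on $\partial\mathcal{F}(q)\setminus\Sigma$, and $C^k$-close to $\tilde{\alpha}$ on the overlap $\mathcal{V}\cap\overline{\mathcal{F}(q)}$.

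The key technical device is a reflection/doubling trick to turn the Neumann condition on $\partial\Omega\setminus\Sigma$ into a pure interior-approximation problem. Concretely, one enlarges $\Omega$ slightly across $\partial\Omega\setminus\Sigma$ by reflecting in a collar neighbourhood (using a local diffeomorphism that flattens the boundary), so that a function which is harmonic on $\mathcal{F}(q)$ and satisfies $\partial_n=0$ on $\partial\Omega\setminus\Sigma$ extends harmonically to a slightly larger open set $\mathcal{O}\supset\overline{\mathcal{F}(q)}$ minus a small neighbourhood of $\overline\Sigma$; conversely, approximating on $\mathcal{O}$ automatically produces the Neumann condition on $\partial\Omega\setminus\Sigma$ at the limit. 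Simultaneously one does the same reflection across $\partial\mathcal{S}(q)$ for $\tilde\alpha$, using that $\partial_n\tilde\alpha=0$ there; this replaces the annular neighbourhood $\mathcal V$ (which has $\partial\mathcal S(q)$ as an interior curve after doubling) by a genuine open neighbourhood. One is then left with the following purely interior statement: given the compact set $K:=(\overline{\mathcal{F}(q)}\setminus U_\Sigma)\cup \widehat{\mathcal V}$ (where $U_\Sigma$ is a small neighbourhood of $\overline\Sigma$ and $\widehat{\mathcal V}$ the doubled neighbourhood), whose complement in the enlarged domain is connected (because $\mathcal{S}(q)$ gets filled in by the doubling and $\Sigma$ provides a ``channel'' to the complement), and a function $\tilde\alpha$ harmonic on a neighbourhood of the $\widehat{\mathcal V}$-component, one approximates it in $C^k$ on $K$ by a function $\alpha_\eta$ harmonic on a neighbourhood of the whole enlarged domain. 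This is exactly Runge's theorem for harmonic functions (equivalently, the Lax–Malgrange / Browder theorem, or, identifying harmonic functions in the plane with real parts of holomorphic functions, the classical Runge theorem), valid precisely because the complement of $K$ is connected; the $C^k$ upgrade from $C^0$ is standard by applying the $C^0$ statement on a slightly larger compact set and using interior elliptic estimates for harmonic functions.

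The steps in order would therefore be: (1) set up the collar neighbourhoods and reflection diffeomorphisms across $\partial\Omega\setminus\Sigma$ and across $\partial\mathcal S(q)$, checking that harmonicity is preserved up to a controlled lower-order perturbation — actually, to keep things genuinely harmonic one uses that near a smooth curve the Neumann condition lets one extend harmonically by the Schwarz reflection principle in boundary-normal coordinates, modulo a conformal change; (2) verify the topological hypothesis that the relevant compact set has connected complement in the enlarged domain, which is where simple-connectedness of $\Omega$ and of $\mathcal{S}(q)$, together with the fact that $\Sigma$ is a nonempty open arc, are used; (3) invoke Runge/Lax–Malgrange to get $C^0$ approximation on the enlarged compact set by globally harmonic functions, then bootstrap to $C^k$ via interior estimates; (4) restrict back to $\mathcal{F}(q)$, obtaining $\alpha_\eta^{i,j}(q,\cdot)$ harmonic there with $\partial_n=0$ on $\partial\mathcal F(q)\setminus\Sigma$ and \eqref{del2new}. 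The main obstacle I expect is step (1)–(2): carefully arranging the doubling so that the Neumann boundary pieces become interior in a way compatible with harmonicity, and verifying the connectedness-of-complement condition uniformly enough that (later, in the proof of Proposition~\ref{farcontr}) the construction can be made continuous in $q$ over $\mathcal{Q}_\delta^h$. The cleanest route, and the one I would follow, is simply to cite the analogous construction in \cite{OG-Addendum} and \cite[p.~147--149]{Cortona}, indicating that the only modification here is the presence of the obstacle $\mathcal{S}(q)$ across whose boundary one also reflects using $\partial_n\tilde\alpha_\varepsilon^{i,j}=0$, and that all estimates depend continuously on $q\in\mathcal{Q}_\delta^h$ because the geometry does.
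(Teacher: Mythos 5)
Your proposal takes the same approach as the paper, which offers no further detail than the one sentence ``Proceeding as in \cite{OG-Addendum} (see also \cite[p. 147-149]{Cortona}) and relying in particular Runge's theorem''; the reflection-across-Neumann-boundaries-then-Runge strategy you sketch is precisely what underlies that citation. One point to fix in your topological reasoning: the condition needed for the harmonic Runge (Lax--Malgrange) theorem is not that the complement of $K$ in the enlarged domain $\mathcal{O}$ be connected, but that it have no component which is relatively compact in $\mathcal{O}$; since the Schwarz reflection only pushes a thin collar into $\mathcal{S}(q)$ rather than ``filling it in'', $\mathcal{O}$ remains an annulus and $\mathcal{O}\setminus\widehat{\mathcal{V}}$ actually has two components (one on each side of the annular set $\widehat{\mathcal{V}}$), but both touch $\partial\mathcal{O}$, so neither is relatively compact and Runge still applies.
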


We now check that the above construction can be made continuous in $q$.

 \begin{lemma} \label{Lem10}
For any $\nu>0$, there exist continuous mappings
$h \in \mathcal{Q}^h_\delta \mapsto \overline{\alpha}^{i,j} (q,\cdot)\in C^{\infty}(\overline{\mathcal{F}(q)})$ where $q=(h,0)$,
$i, j\in\{1,2,3\}$, such that for any $h \in \mathcal{Q}^h_\delta$,
$\Delta_{x}\overline{\alpha}^{i,j} (q,x)=0$ in $\mathcal{F}(q)$, $\partial_{n}\overline{\alpha}^{i,j} (q,x)=0$ on $\partial\mathcal{F}(q)\setminus\Sigma$ and
\begin{equation} \label{nd1}
\left| \int_{\partial\mathcal{S}(q)} \nabla \overline{\alpha}^{i,j}(q,\cdot) \cdot \nabla \overline{\alpha}^{k,l}(q,\cdot)  \,n\, d\sigma
- \delta_{(i,j),(k,l)} \, e_i  \right| \leq \nu .
\end{equation}
\end{lemma}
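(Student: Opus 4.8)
The plan is to prove Lemma~\ref{Lem10} by combining the three preceding lemmas (Lemma~\ref{LemBase}, Lemma~\ref{LemmeEta}) with a diagonal/uniformity argument in the parameter $q$, using a compactness argument to pass from ``continuity near a fixed $q_0$'' to global continuity on $\mathcal{Q}^h_\delta$. First I would fix $q_0 = (h_0,0)$ and recall that Lemma~\ref{LemBase} produces, for each $\varepsilon \in (0,1)$, the functions $\tilde{\alpha}_\varepsilon^{i,j}$ harmonic in a neighbourhood $\mathcal{V}_\varepsilon^{i,j}$ of $\partial\mathcal{S}(q_0)$ with $\partial_n \tilde{\alpha}_\varepsilon^{i,j} = 0$ there, and that the ``quasi-orthogonality'' relation
\begin{equation*}
\int_{\partial\mathcal{S}(q_0)} \nabla\tilde{\alpha}_\varepsilon^{i,j} \cdot \nabla\tilde{\alpha}_\varepsilon^{k,l}\, n\, d\sigma \longrightarrow \delta_{(i,j),(k,l)}\, e_i
\end{equation*}
holds as $\varepsilon \to 0^+$. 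Then I would translate these to a general nearby $q = (h,0)$ via $\tilde{\alpha}_\varepsilon^{i,j}(q,x) := \tilde{\alpha}_\varepsilon^{i,j}(x + h_0 - h)$ on $\mathcal{V}_\varepsilon^{i,j}(q) = \mathcal{V}_\varepsilon^{i,j} - h_0 + h$, observing (as noted just before the statement) that the boundary integral over $\partial\mathcal{S}(q)$ equals the one over $\partial\mathcal{S}(q_0)$ by the change of variables, since $\mathcal{S}(q)$ is just the translate of $\mathcal{S}(q_0)$. So the quasi-orthogonality relation holds uniformly in $h$, at the level of these local functions.

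Next I would invoke Lemma~\ref{LemmeEta}: for each $\varepsilon$ and each $q$ it gives harmonic functions $\alpha_\eta^{i,j}(q,\cdot)$ on all of $\mathcal{F}(q)$ with $\partial_n \alpha_\eta^{i,j} = 0$ on $\partial\mathcal{F}(q)\setminus\Sigma$ and $\|\alpha_\eta^{i,j}(q,\cdot) - \tilde{\alpha}_\varepsilon^{i,j}(q,\cdot)\|_{C^k(\mathcal{V}_\varepsilon^{i,j}(q)\cap\overline{\mathcal{F}(q)})} \to 0$ as $\eta \to 0^+$. Taking $k=1$ and combining with the quasi-orthogonality of the $\tilde{\alpha}_\varepsilon^{i,j}$, I get that for $\varepsilon$ small, then $\eta$ small (depending on $\varepsilon$), the global functions satisfy \eqref{nd1}. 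To make this work with a fixed tolerance $\nu$: first pick $\varepsilon$ so that the local relation is within $\nu/2$; then the $C^1$-convergence of $\alpha_\eta^{i,j}$ to $\tilde{\alpha}_\varepsilon^{i,j}$ on the overlap, together with the boundedness of the relevant traces, controls the difference of the boundary integrals by $\nu/2$ for $\eta$ small. The only subtlety is ensuring this $\eta$ can be chosen uniformly in $q$ over a neighbourhood of $q_0$ (or over a compact piece of $\mathcal{Q}^h_\delta$) — this is where I would use that Lemma~\ref{LemmeEta}'s construction, being based on Runge's theorem applied on domains depending continuously on $q$, can be carried out with estimates locally uniform in $q$; the paper has essentially asserted the continuity of the construction in $q$, and one reads off local uniformity from the same proof.

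The final step is to manufacture an honestly continuous map $h \mapsto \overline{\alpha}^{i,j}(q,\cdot) \in C^\infty(\overline{\mathcal{F}(q)})$ satisfying \eqref{nd1} globally on $\mathcal{Q}^h_\delta$. I would cover $\mathcal{Q}^h_\delta$ by countably many balls on each of which the above local construction (with fixed $\varepsilon, \eta$ chosen for that ball) yields functions depending continuously — indeed smoothly — on $h$ and satisfying \eqref{nd1}, then patch them together with a smooth partition of unity subordinate to the cover. Since \eqref{nd1} is an approximate (inequality) condition rather than an exact one, convex combinations of functions each satisfying it within $\nu$ will still satisfy it within $\nu$ provided the cover is chosen so that overlapping pieces use comparable data — more carefully, I would instead use the partition of unity to glue the controls $g$ (or equivalently the normal data on $\Sigma$) rather than the solutions, so that $\overline{\alpha}^{i,j}(q,\cdot) = \mathcal A[q, \overline{g}^{i,j}(q,\cdot)]$ remains harmonic with the correct Neumann condition off $\Sigma$ automatically, and then shrink the mesh of the cover and the tolerances so that the glued object still meets \eqref{nd1}. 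The main obstacle I anticipate is precisely this gluing: one must be sure that interpolating between two admissible local choices does not destroy the quasi-orthogonality relation \eqref{nd1}, and the clean way around it is to exploit that \eqref{nd1} involves a continuous functional of $q$ and of the chosen data, so on a sufficiently fine cover the functional varies by less than $\nu/2$ and the convexity of the ``within $\nu$'' condition does the rest. Everything else — harmonicity, the homogeneous Neumann condition on $\partial\mathcal{F}(q)\setminus\Sigma$, and $C^\infty$ regularity up to the boundary — is preserved under such operations and under $\mathcal A[q,\cdot]$ by the standard elliptic regularity and domain-dependence results recalled in Remark~\ref{RemReg}.
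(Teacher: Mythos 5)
Your proposal is correct and follows essentially the same route as the paper: construct the local approximations via Lemmas~\ref{LemBase} and~\ref{LemmeEta}, observe that the constraint \eqref{nd1} is an open (approximate) condition that propagates to a neighbourhood of each base point by continuity of the relevant Neumann problem in $q$, and then glue via a partition of unity in $q$ at the cost of a fixed multiplicative constant in front of $\nu$, which is absorbed by rescaling $\nu$ from the start. The only cosmetic difference is that the paper works on the compact set $\overline{\mathcal{Q}^h_\delta}$ (after slightly shrinking $\delta$) so that a finite subcover suffices, whereas you use a countable locally finite cover of the open set; and the paper glues the solutions $\hat{\alpha}^{i,j}_\eta(\tilde q,q_p,\cdot)$ directly rather than the Neumann data, which by linearity of the Neumann problem amounts to the same thing you propose.
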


\begin{proof}
Let us assume that the functions $\alpha_{\eta}^{i,j}$ were previously defined not only for $h \in Q^h_{\delta}$ but for  $h \in \overline{Q^h_{\delta}}$; this is possible by using a smaller  $\delta$. 
Hence we may for each $h \in \overline{Q^h_{\delta}}$ find functions $\alpha_{\eta}^{i,j}$ (for some $\eta>0$) satisfying the properties above, and in particular such that \eqref{nd1} is valid. \par
Next we observe that for any $h \in \overline{\mathcal{Q}^h_\delta}$, setting $q=(h,0)$,
the unique solution $\hat{\alpha}_{\eta}^{i,j}(\tilde{q},q,\cdot)$ (up to an additive constant) to the Neumann problem
$\Delta_{x} \hat{\alpha}_{\eta}^{i,j}(\tilde{q},q,x)=0$ in $ \mathcal{F}(\tilde{q})$, $\partial_{n}\hat{\alpha}_{\eta}^{i,j}(\tilde{q},q,x)=0$ on $\partial \mathcal{F}(\tilde{q}) \setminus \Sigma$,
$\partial_{n} \hat{\alpha}_{\eta}^{i,j}(\tilde{q},q,x) = \partial_{n}\alpha_{\eta}^{i,j}(q,x)$ on $\Sigma$,  is continuous with respect to $\tilde{q} \in \mathcal{Q}_\delta$.
It follows that when a family of functions ${\alpha}_{\eta}^{i,j}$ satisfies \eqref{nd1} at some point $h \in \overline{Q^h_{\delta}}$, it satisfies \eqref{nd1} (with perhaps $2\nu$ in the right hand side) in some neighborhood of $h$. Since $\overline{Q^h_{\delta}}$ is compact and can be covered with such neighborhoods, one can extract a finite subcover and use a partition of unity (according to the variable $q$) adapted to this subcover to conclude: one gets an estimate like \eqref{nd1} with $C \nu$ on the right hand side (for some constant $C$). It is then just a matter of considering $\nu/C$ rather than $\nu$ at the beginning.
\end{proof}
Finally our basic bricks to prove Proposition~\ref{dfarcontr} are given in the following lemma, where we can add the constraint \eqref{cs}.
\begin{lemma} \label{Lem3emeCond}
For any $\nu>0$, there exist continuous mappings
$q=(h,0) \in\mathcal{Q}_\delta \mapsto \overline{\alpha}^{i} (q,\cdot)\in C^{\infty}(\overline{\mathcal{F}(q)})$, 
$i\in\{1,2,3\}$, such that for any $q=(h,0) \in\mathcal{Q}_\delta$,
$\Delta_{x} \overline{\alpha}^{i} (q,x)=0$ in $\mathcal{F}(q)$, $\partial_{n}\overline{\alpha}^{i} (q,x)=0$ on $\partial\mathcal{F}(q)\setminus\Sigma$ and
\begin{gather} \label{angou}
\left|  \int_{\partial\mathcal{S}(q)} \nabla \overline{\alpha}^{i}(q,\cdot) \cdot \nabla \overline{\alpha}^{j}(q,\cdot) \,n\, d\sigma
- \delta_{i,j} \, e_i  \right| \leq \nu ,  \\
\int_{\partial\mathcal{S}(q)}   \overline{\alpha}^{i}(q,\cdot) \,n\, d\sigma  = 0 .
\end{gather}
\end{lemma}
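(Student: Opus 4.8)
The plan is to build the functions $\overline{\alpha}^i$ from the functions $\overline{\alpha}^{i,j}$ of Lemma~\ref{Lem10} by forming suitable linear combinations over the index $j$, choosing the coefficients so that the extra orthogonality constraint $\int_{\partial\mathcal{S}(q)}\overline{\alpha}^i(q,\cdot)\,n\,d\sigma=0$ is met while the quadratic relations \eqref{angou} are only mildly perturbed. The point of having \emph{three} auxiliary functions $\overline{\alpha}^{i,1},\overline{\alpha}^{i,2},\overline{\alpha}^{i,3}$ for each $i$, with mutually \emph{disjoint} supports near $\partial\mathcal{S}(q_0)$ in the construction of Lemma~\ref{LemBase}, is exactly to have room to cancel the vector $\int_{\partial\mathcal{S}(q)}\overline{\alpha}^{i,j}(q,\cdot)\,n\,d\sigma\in\R^2$ without disturbing the near-diagonal Gram structure: because of the disjoint supports, the cross terms $\int \nabla\overline{\alpha}^{i,j}\cdot\nabla\overline{\alpha}^{i,k}\,n\,d\sigma$ for $j\neq k$ are as small as we like.

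\smallskip

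\noindent\textbf{Step 1: set up the combination.} Fix $q=(h,0)\in\mathcal{Q}_\delta$ and write $w^{i,j}(q):=\int_{\partial\mathcal{S}(q)}\overline{\alpha}^{i,j}(q,\cdot)\,n\,d\sigma\in\R^2$, which depends continuously on $q$. I look for $\overline{\alpha}^i(q,\cdot):=\sum_{j=1}^3 \lambda_{i,j}(q)\,\overline{\alpha}^{i,j}(q,\cdot)$ with $\sum_j\lambda_{i,j}(q)w^{i,j}(q)=0$, together with a normalisation ensuring $\sum_j\lambda_{i,j}(q)^2$ stays close to $1$ (so that \eqref{angou} survives). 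Since the three vectors $w^{i,1},w^{i,2},w^{i,3}$ live in the plane $\R^2$, the linear system $\sum_j\lambda_{i,j}w^{i,j}=0$ has at least a one-dimensional space of solutions; I pick a solution depending continuously on $q$ and normalise it to, say, $\sum_j\lambda_{i,j}(q)^2=1$. Continuity in $q$ of this choice is the one genuinely delicate point (see below). Being harmonic with vanishing normal derivative on $\partial\mathcal{F}(q)\setminus\Sigma$ is preserved under linear combinations, so $\overline{\alpha}^i(q,\cdot)$ still satisfies the first two requirements, and the second displayed identity in \eqref{angou} holds by construction.

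\smallskip

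\noindent\textbf{Step 2: check the Gram relations.} Expanding, $\int_{\partial\mathcal{S}(q)}\nabla\overline{\alpha}^i\cdot\nabla\overline{\alpha}^j\,n\,d\sigma=\sum_{k,l}\lambda_{i,k}\lambda_{j,l}\int_{\partial\mathcal{S}(q)}\nabla\overline{\alpha}^{i,k}\cdot\nabla\overline{\alpha}^{j,l}\,n\,d\sigma$. By Lemma~\ref{Lem10} each of these integrals is within $\nu'$ of $\delta_{(i,k),(j,l)}e_i$ (with $\nu'>0$ to be fixed as small as needed), so for $i\neq j$ the whole sum is $O(\nu')$, while for $i=j$ it is $\bigl(\sum_k\lambda_{i,k}^2\bigr)e_i+O(\nu')=e_i+O(\nu')$ using the normalisation $\sum_k\lambda_{i,k}^2=1$. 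Since $|\lambda_{i,k}(q)|\leq 1$ uniformly, the error constant in $O(\nu')$ is uniform in $q$, so choosing $\nu'$ small enough (i.e. invoking Lemma~\ref{Lem10} with a small parameter, replacing $\nu$ by $\nu/C$ as in its proof) yields \eqref{angou}. This is routine.

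\smallskip

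\noindent\textbf{The main obstacle} is the continuity in $q$ of the coefficient selection in Step~1: the kernel of the map $(\lambda_j)\mapsto\sum_j\lambda_j w^{i,j}(q)$ may jump in dimension if the $w^{i,j}(q)$ become degenerate (e.g. all collinear or all zero) at some $q$, so one cannot naively pick a basis of the kernel continuously on all of $\mathcal{Q}_\delta$. I would handle this exactly as in the proof of Lemma~\ref{Lem10}: work locally. Around each $q$ one has \emph{some} continuous local choice of $(\lambda_{i,j})$ — for instance, if the $2\times 3$ matrix $[w^{i,1}\,w^{i,2}\,w^{i,3}]$ has rank $2$ near $q$ one can solve explicitly by Cramer-type formulas (take the normalized cross-product-like combination), and if its rank drops this only makes the constraint easier; in all cases a continuous local solution with $\sum_j\lambda_{i,j}^2=1$ exists. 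Then cover the relevant (compact, after shrinking $\delta$) parameter set by such neighbourhoods, extract a finite subcover, and glue the local choices with a partition of unity in the variable $q$, renormalising at the end. One must be slightly careful that a convex combination of unit vectors $(\lambda_{i,j})$ satisfying $\sum_j\lambda_{i,j}w^{i,j}=0$ still satisfies $\sum_j\lambda_{i,j}w^{i,j}=0$ (true, by linearity) and only needs renormalising to restore $\sum_j\lambda_{i,j}^2=1$, which is harmless; the resulting $\lambda_{i,j}(q)$ are continuous and bounded by $1$. Feeding these into Step~2 completes the proof, and Proposition~\ref{dfarcontr} then follows by the substitution of Lemma~\ref{kup1}, setting $\overline{\alpha}(q,v):=\sum_{i=1}^3\sqrt{\mu_i(v)}\,\overline{\alpha}^i(q,\cdot)$ and $\overline{g}(q,v):=\sum_{i=1}^3\sqrt{\mu_i(v)}\,\partial_n\overline{\alpha}^i(q,\cdot)\mathbbm{1}_\Sigma$ (up to the mean-zero normalisation), whose continuity in $(q,v)$ is inherited from that of $\overline{\alpha}^i$ and the smoothness and positivity of $\mu_i$.
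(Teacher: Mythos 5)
Your strategy is the same as the paper's: form $\overline{\alpha}^i=\sum_j\lambda^{i,j}(q)\overline{\alpha}^{i,j}$ with the $\lambda$'s killing the constraint vector $w^{i,j}(q):=\int_{\partial\mathcal{S}(q)}\overline{\alpha}^{i,j}n\,d\sigma$ and normalised so that $\sum_j|\lambda^{i,j}|^2=1$, then verify the Gram relations from the near-diagonal structure of Lemma~\ref{Lem10}. Step~2 is fine. You do more than the paper (which simply asserts the existence of such $\lambda^{i,j}(q)$ and implicitly takes continuity for granted), by explicitly raising the continuity-of-selection issue, and that is the right instinct.

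However, the partition-of-unity remedy you propose to settle that continuity issue has a genuine flaw which you flag but then dismiss too quickly. On an overlap $U_1\cap U_2$, two continuous unit sections of a one-dimensional kernel can only differ by a sign, and nothing prevents this sign from being $-1$. At such points the convex combination $\sum_k\chi_k\lambda^{(k)}$ vanishes, and renormalising a zero vector is impossible; this is the standard orientability obstruction for real line bundles, and a partition of unity cannot cure it. So "only needs renormalising, which is harmless" is not justified as written. The cleaner route is the one you mention in passing: the Cramer-type vector $\lambda=\bigl(\det[w^{i,2}\,w^{i,3}],\ \det[w^{i,3}\,w^{i,1}],\ \det[w^{i,1}\,w^{i,2}]\bigr)$ is a \emph{globally} defined, continuous kernel vector, so no gluing is required; the only thing left to argue is that it never vanishes, i.e. that the $2\times3$ matrix $[w^{i,1}\,w^{i,2}\,w^{i,3}]$ has rank $2$ on all of $\overline{\mathcal{Q}^h_\delta}$. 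That rank condition can be built into the choice of the $\overline{\alpha}^{i,j}$ (it is an open, stable condition and the construction in Lemmas~\ref{LemBase}--\ref{Lem10} leaves ample freedom to guarantee it), but it does need to be said; as it stands your argument could fail exactly where the rank drops or where the local sign choices disagree.
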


\begin{proof}
Consider the functions $\overline{\alpha}^{i,j}$ given by Lemma~\ref{Lem10}.
For any $q=(h,0) \in\mathcal{Q}_\delta$, for any  $i\in\{1,2,3\}$, the three vectors 
$ \int_{\partial\mathcal{S}(q)} \overline{\alpha}^{i,j}(q,\cdot) \, n \, d\sigma$, where $j\in\{1,2,3\}$, are 
linearly dependent in  $\mathbb R^2$; therefore there exists $\lambda^{i,j}(q)\in\mathbb{R}$ such that
\begin{equation} \label{fucklasncf}
\sum_{j=1}^{3} \lambda^{i,j}(q) \int_{\partial\mathcal{S}(q)} \overline{\alpha}^{i,j}(q,\cdot) \, n \, d\sigma = 0
\text{ and } \displaystyle\sum_{j=1}^{3} |\lambda^{i,j}(q)|^2 = 1,
\end{equation}
Then one defines $\overline{\alpha}^{i}(q,\cdot):=\sum_{j=1}^{3} \lambda^{i,j}(q)  \overline{\alpha}^{i,j}(q,\cdot)$, and one checks that it satisfies \eqref{angou} with some $C \nu$ in the right hand side. Again changing $\nu$ in $\nu/C$ allows to conclude.
\end{proof}
We are now in position to prove Proposition~\ref{dfarcontr}.
\begin{proof}[Proof of Proposition~\ref{dfarcontr}]
Let  $\delta>0$.
Let $\nu>0$. We define the mapping ${\mathcal S}$ which with $(h,v) \in \mathcal{Q}^h_\delta \times \R^2$ associates the function 
$$
\tilde{\alpha}(q,\cdot):=\sum_{i=1}^{3}\sqrt{\mu^{i}(v)} \, \overline{\alpha}^{i}(q,\cdot),
$$
in $C^\infty (\overline{\mathcal{F}(q)})$, where the functions $\mu^i$ were introduced in Lemma~\ref{kup1} and the functions $\overline{\alpha}^{i}$ were introduced in Lemma~\ref{Lem3emeCond}.
Next we define ${\mathcal T}: \mathcal{Q}_\delta^h \times \R^2 \rightarrow \mathcal{Q}_\delta^h \times \R^2$ by
\begin{equation*}
(h,v) \mapsto ({\mathcal T}_{1},{\mathcal T}_{2})(h,v):= \left( h, \int_{\partial\mathcal{S}(q)} |\nabla \tilde{\alpha}(q,\cdot)|^2 \,n\, d\sigma \right) , 
\ \text{ where } \ \tilde{\alpha} ={\mathcal S}(h,v) .
\end{equation*}
Using \eqref{SommeDesMu} and \eqref{angou}, one checks that ${\mathcal T}$ is smooth and that
\begin{equation*}
\frac{\partial {\mathcal T}_{2}}{\partial v} = \mbox{Id} + {\mathcal O}(\nu).
\end{equation*}
Hence taking $\nu$ sufficiently small, we see that $\frac{\partial {\mathcal T}_{2}}{\partial v}$ is invertible, hence $\frac{\partial {\mathcal T}}{\partial (h,v)}$ is invertible.
Consequently one can use the inverse function theorem on ${\mathcal T}$: for each $h_{0} \in \overline{{Q}_{\delta}^h}$ it realizes a local diffeomorphism at$(h_{0}, 0)$, and hence on $\overline{Q^h_{\delta}} \times B(0,r)$ for $r>0$ small enough. 
This gives the result of Proposition \ref{dfarcontr} for $v$ small: given $(h,v) \in \overline{Q^h_{\delta}} \times B(0,r)$, we let $(h,\tilde{v}):= {\mathcal T}^{-1}(h,v)$. Then the functions 
$\overline{\alpha}:=\sum_{i=1}^{3}\sqrt{\mu^{i}( \tilde{v})} \, \overline{\alpha}^{i}(q,\cdot)$ and $\overline{g}:=\mathbbm{1}_{\Sigma} \,  \partial_{n} \overline{\alpha}$ satisfy the requirements.
The general case follows by linearity of \eqref{harm} and \eqref{cs} and by homogeneity of \eqref{hsz}.
This ends the proof of Proposition \ref{dfarcontr}.
\end{proof}
\subsection{The case when $\mathcal{S}_0$ is not a disk}
We now get back to the proof of Proposition \ref{farcontr}.
We will denote by $\text{coni}(A)$ the conical hull of $A$, namely
$$
\text{coni}(A) : =\left\{\sum_{i=1}^{k} \lambda_i a_i,\ k \in \mathbb{N}^*, \ \lambda_i \geq 0, \ a_i\in A \right\},
$$
The first step is the following elementary geometric lemma.
\begin{lemma} \label{geocond}
Let $\mathcal{S}_0\subset\Omega$ bounded, closed, simply connected with smooth boundary, which is not a disk.
Then $ \text{coni}\{(n (x),(x-h_0)^\perp \cdot n(x)), \ x \in \partial \mathcal{S}_0 \}=\mathbb{R}^3.$
\end{lemma}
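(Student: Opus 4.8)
The plan is to argue by contradiction. Suppose that the conical hull $\text{coni}\{(n(x),(x-h_0)^\perp\cdot n(x)),\ x\in\partial\mathcal{S}_0\}$ is a proper subset of $\mathbb{R}^3$. Since a conical hull that is not all of $\mathbb{R}^3$ is contained in a closed half-space through the origin (this is the separating hyperplane theorem for convex cones), there exists a nonzero vector $(a,b)\in\mathbb{R}^2\times\mathbb{R}$ such that
\begin{equation*}
a\cdot n(x) + b\,(x-h_0)^\perp\cdot n(x) \leqslant 0 \quad\text{for all } x\in\partial\mathcal{S}_0.
\end{equation*}
Equivalently, writing $V(x) := a + b\,(x-h_0)^\perp$, we have $V(x)\cdot n(x)\leqslant 0$ along the whole boundary $\partial\mathcal{S}_0$.

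The key observation is that $V$ is an affine vector field on $\mathbb{R}^2$ with $\div V = 0$ (indeed $V$ is a constant plus a rotation field, both divergence-free). Hence, by the divergence theorem,
\begin{equation*}
\int_{\partial\mathcal{S}_0} V(x)\cdot n(x)\, d\sigma = \int_{\mathcal{S}_0} \div V\, dx = 0.
\end{equation*}
Combining this with the pointwise inequality $V\cdot n\leqslant 0$, which holds everywhere on the (connected, smooth) curve $\partial\mathcal{S}_0$, forces $V(x)\cdot n(x) = 0$ at every point $x\in\partial\mathcal{S}_0$. Thus $V$ is everywhere tangent to $\partial\mathcal{S}_0$; the boundary curve is an integral curve of the affine field $V$.

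It remains to show that this tangency condition forces $\mathcal{S}_0$ to be a disk, contradicting the hypothesis. I would split into the two cases according to whether $b=0$ or $b\neq0$. If $b=0$ then $V\equiv a$ is a nonzero constant vector everywhere tangent to the closed curve $\partial\mathcal{S}_0$; but a smooth closed curve cannot have a constant tangent direction (its unit tangent must wind once around $S^1$), so this case is impossible, i.e. $\text{coni}$ already contains a full $2$-dimensional subspace's worth of the $n$-directions — more simply, one reaches a contradiction directly. If $b\neq 0$, then after translating so that $h_0$ becomes the fixed point $x_* := h_0 - \tfrac1b a^\perp$ of the affine map $x\mapsto a+b(x-h_0)^\perp$ (solving $a+b(x-h_0)^\perp=0$ uses $b\neq0$), the field $V(x) = b\,(x-x_*)^\perp$ is the rotation field about $x_*$. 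Its integral curves are exactly the circles centered at $x_*$. Since $\partial\mathcal{S}_0$ is a connected integral curve, it is such a circle, hence $\mathcal{S}_0$ is a disk — the desired contradiction.

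The main obstacle is really just the clean execution of this last geometric step: making rigorous the statement that a connected smooth curve everywhere tangent to the rotation field $b(x-x_*)^\perp$ must be a full circle about $x_*$ (one integrates the ODE $\dot x = b(x-x_*)^\perp$, notes that $|x-x_*|^2$ is conserved along it, and uses connectedness and closedness of $\partial\mathcal{S}_0$ together with the fact that the field is nonvanishing away from $x_*$ — and $x_*\notin\partial\mathcal{S}_0$ since otherwise $V$ would vanish there while the curve is a nonsingular integral curve, or one simply notes $\mathcal S_0$ has nonempty interior so $x_*$ being on the boundary would still be handled by the conservation argument on a neighborhood). The $b=0$ subcase and the separation argument are routine. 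I would present the contradiction cleanly as: the only way $V\cdot n\equiv 0$ on $\partial\mathcal{S}_0$ for a nonzero affine divergence-free $V$ is $\partial\mathcal{S}_0$ a circle, contradicting ``not a disk''.
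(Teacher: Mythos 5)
Your proof is correct and follows essentially the same route as the paper's: separate the cone from the origin by a hyperplane, observe that the resulting affine field is divergence-free so the boundary integral vanishes by Green's formula, conclude the separation is an equality (tangency), and then show tangency to a rotation field forces $\partial\mathcal{S}_0$ to be a circle. The only cosmetic difference is that the paper normalizes the separating normal to $((a,b),1)$ (dismissing the third-component-zero case just as you dismiss your $b=0$ case via the fact that the normals fill out $S^1$), and closes by parametrizing the translated curve and noting $|c(s)|^2$ is constant, where you phrase the last step via integral curves of the rotation field; these are the same argument. One small slip: the zero of $V(x)=a+b(x-h_0)^\perp$ is $x_*=h_0+\tfrac{1}{b}a^\perp$, not $h_0-\tfrac1b a^\perp$ (since $v^\perp=w$ gives $v=-w^\perp$), but this sign does not affect the argument.
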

\begin{proof}
Suppose the contrary. Then there exists a plane separating (in the large sense) the origin in $\mathbb{R}^3$ from the set
$\text{coni} (\{ (n (x), (x-h_0)^\perp \cdot n(x)), \ x \in \partial \mathcal{S}_0 \})$.
We claim that a normal vector to this plane can be put in the form $(a,b,1)$, with $a,b\in\mathbb{R}$. Indeed, otherwise it would need to be of the form $(a,b,0)$, and the separation inequality would give $(a,b)\cdot n (x) \geq 0,\ \forall x \in \partial\mathcal{S}_0$.
However, since $\partial\mathcal{S}_0$ is a smooth, closed curve, the set $\{n (x):\ x\in\partial\mathcal{S}_0\}$ is the unit circle of $\mathbb{R}^2$, therefore we have a contradiction. \par
Now we deduce that we have the following separation property:
$$
(a,b)\cdot n (x) + (x-h_0)^\perp \cdot n(x) \geq 0, \ \ \forall  x \in \partial \mathcal{S}_0.
$$
Denoting $w=(a,b)-h_0^\perp$, this translates into $(w+x^\perp)\cdot n (x) \geq 0$. But using Green's formula, we get
\begin{equation*}
0 \leq \int_{\partial\mathcal{S}_0} (w+x^\perp)\cdot n (x) \, d\sigma = \int_{\mathcal{S}_0} \text{div}(w+x^\perp) \, dx = 0,
\end{equation*}
and consequently, we deduce that $(w+x^\perp)\cdot n (x)=0$ for all $x$ in $\partial \mathcal{S}_0$.
This is equivalent to  $(x-w^\perp) \cdot \tau (x) = 0$ for all $x$ in $\partial \mathcal{S}_0$.
Parameterizing the translated curve $\partial\mathcal{S}_0-w^\perp$ by $\{c(s),\ s\in[0,1]\}$, it follows that
$c(s)\cdot \dot{c}(s) = 0$, for all $s$ in $[0,1]$, and therefore $|c(s)|^2$ is constant.
This means that $\partial\mathcal{S}_0-w^\perp$ is a circle, so $\mathcal{S}_0$ is a disk, which is a contradiction.
\end{proof}
Fix $q_0 \in Q_{\delta}$. 
Recalling the definitions of the Kirchhoff potentials in   (\ref{phi})  and  (\ref{kir}), we infer from the previous lemma that 
$$  \text{coni}\{ \partial_n \Phi (q_0,x),\ x\in\partial\mathcal{S}_0 \} =   \mathbb{R}^3.$$
In place of Lemma~\ref{kup1}, we have the following lemma which is a straightforward consequence of  Lemma \ref{geocond} and of a repeated application of  Carath\'eodory's theorem on the convex hull.
\begin{lemma} \label{kup2}
There  are  some  $(x_i )_{i\in\{1,\ldots,16\}}$ in $ \partial \mathcal{S}_0$ 
and positive continuous mappings $\mu_{i}: \R^{3} \rightarrow \R, \ {1\leqslant i \leqslant 16} $,  $v \mapsto \mu_i(v) $ such that
$\sum_{i=1}^{16} \mu_i(v)  \partial_n \Phi (q_0,x_i)= v $.
\end{lemma}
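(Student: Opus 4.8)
The plan is to deduce the lemma purely from convex geometry, starting from the identity $\text{coni}\{\partial_n\Phi(q_0,x):x\in\partial\mathcal S_0\}=\mathbb{R}^3$ recorded just above, and using Carath\'eodory's theorem to keep the number of points bounded. Set $S:=\{\partial_n\Phi(q_0,x):x\in\partial\mathcal S_0\}\subset\mathbb{R}^3$; it is compact, being the continuous image of the smooth closed curve $\partial\mathcal S_0$, so $\text{conv}(S)$ is compact. First I would note that $\text{coni}(S)=\mathbb{R}^3$ forces $0\in\text{int}\,\text{conv}(S)$: otherwise a supporting or separating hyperplane of the closed convex set $\text{conv}(S)$ would furnish some $a\neq 0$ with $a\cdot y\geq 0$ for all $y\in S$, hence for all $y\in\text{coni}(S)=\mathbb{R}^3$, and taking $y=-a$ gives $-|a|^2\geq 0$, which is absurd.

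Next I would produce the $16$ points. Choose $\rho>0$ with $\overline{B}(0,\rho)\subset\text{conv}(S)$ and a nondegenerate tetrahedron $T$ with vertices $p_0,p_1,p_2,p_3\in\overline{B}(0,\rho)$ such that $0\in\text{int}\,T$. Since each $p_j$ lies in $\text{conv}(S)$, Carath\'eodory's theorem in $\mathbb{R}^3$ writes it as a convex combination of at most four points of $S$; collecting for $j=0,\dots,3$ all the points of $S$ so used (and padding with arbitrary further values $\partial_n\Phi(q_0,x)$, $x\in\partial\mathcal S_0$, if strictly fewer than $16$ occur) yields $x_1,\dots,x_{16}\in\partial\mathcal S_0$ for which the vectors $y_i:=\partial_n\Phi(q_0,x_i)$ satisfy $\text{conv}\{y_1,\dots,y_{16}\}\supset T$, so that $0\in\text{int}\,\text{conv}\{y_1,\dots,y_{16}\}$. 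In particular $\{y_i\}_{i=1}^{16}$ spans $\mathbb{R}^3$, and writing $\bar y=\tfrac{1}{16}\sum_{i}y_i$ and using that $-\varepsilon\bar y\in\text{conv}\{y_i\}$ for all small $\varepsilon>0$, one obtains a linear relation $\sum_{i=1}^{16}c_iy_i=0$ with every $c_i>0$ (if $\bar y=0$ simply take $c_i=1$).

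Finally I would build the continuous coefficients. As $\{y_i\}$ spans $\mathbb{R}^3$, fix linear maps $a_1,\dots,a_{16}:\mathbb{R}^3\to\mathbb{R}$ (for instance, coordinates with respect to a basis extracted from $\{y_i\}$, the remaining $a_i$ being set to $0$) with $\sum_i a_i(v)y_i=v$ for all $v$, and put $\mu_i(v):=a_i(v)+K(1+|v|)c_i$. Then each $\mu_i$ is continuous, $\sum_i\mu_i(v)y_i=v+K(1+|v|)\sum_i c_iy_i=v$, and choosing $K>(\max_i\|a_i\|)/(\min_i c_i)$ gives $\mu_i(v)\geq K(\min_j c_j)(1+|v|)-(\max_j\|a_j\|)\,|v|>0$ for all $v\in\mathbb{R}^3$ and all $i$; this is exactly the asserted decomposition $\sum_{i=1}^{16}\mu_i(v)\,\partial_n\Phi(q_0,x_i)=v$ with positive continuous $\mu_i$.

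The argument is elementary; the only points needing care are (i) organizing Carath\'eodory so that exactly four applications, one per vertex of $T$, bound the number of points by $16$, and (ii) arranging the relation $\sum_i c_iy_i=0$ with strictly positive coefficients, which is what makes the final $\mu_i$ positive rather than merely nonnegative. Since the point set $\{x_i\}$ is chosen once and for all, independently of $v$, this proves the lemma.
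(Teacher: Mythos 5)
Your proof is correct and fills in exactly the argument the paper only sketches (``a straightforward consequence of Lemma~\ref{geocond} and of a repeated application of Carath\'eodory's theorem''): it extracts $0\in\operatorname{int}\operatorname{conv}(S)$ from $\operatorname{coni}(S)=\mathbb{R}^3$, applies Carath\'eodory once per vertex of a tetrahedron surrounding the origin to obtain the $4\times 4=16$ points, produces a strictly positive relation $\sum_i c_i y_i=0$, and uses it to shift linear coordinates into a positive continuous decomposition. No issues.
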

We are now in position to establish Proposition~\ref{farcontr}.
We deduce from Lemma  \ref{kup2} that for any  $q := (h,\vartheta) \in \overline{Q_{\delta}}$, for any $v$ in $ \R^{3}$, 
$$\sum_{i=1}^{16} \mu_i( \mathcal{R} (\vartheta) v) \,   \partial_n \Phi (q, x_i (q)) =  \mathcal{R} (\vartheta)  v   ,$$
where $ x_i (q) := R(\vartheta) (x_i - h_0) + h $ 
and 
$\mathcal{R} (\vartheta)  $ denotes the $3 \times 3$ rotation matrix defined by
$$
\mathcal{R} (\vartheta) 
:= \left( \begin{array}{ccc}
R(\vartheta)  & 0\\
0 & 1 
\end{array} \right) . 
$$

Due to the Riemann mapping theorem, there exists a biholomorphic mapping 
 $\Psi:\overline{\mathbb{C}}\setminus B(0,1)\to \overline{\mathbb{C}}\setminus\mathcal{S}(q)$ with $\partial\mathcal{S}(q)=\Psi(\partial B(0,1))$, where $\overline{\mathbb{C}}$ denotes the Riemann sphere.
We consider the parametrisations $\{c(s)=(\cos(s),\sin(s)),\ s\in [0,2\pi]\}$ of $\partial B(0,1)$, respectively $\{\Psi(c(s)),\ s\in [0,2\pi]\}$ of $\partial \mathcal{S}(q)$, and the corresponding $s_i$ such that $ x_i (q) = \Psi(c(s_i))$, for $i\in\{1,\ldots,16\}$. 

Then, for any smooth function $\alpha:\partial\mathcal{S}(q)\to\mathbb{R}$, due to the Cauchy-Riemann relations, we have the following:
\begin{align*}
\begin{split}
\partial_{n} \alpha(\Psi(x)) &= \frac{1}{\sqrt{|\text{det}(D\Psi(x))|}} \partial_{n_B} (\alpha\circ\Psi )(x),\\
\int_{\partial\mathcal{S}(q)} |\nabla\alpha(x)|^2 \, \partial_n \Phi(q,x) \, d\sigma &= \int_{\partial B(0,1)} |\nabla\alpha(\Psi(x))|^2 \, \partial_{n_B} \Phi(q,\Psi(x)) \, \frac{1}{\sqrt{|\text{det}(D\Psi(x))|}} \, d\sigma,
\end{split}
\end{align*}
for any $x\in\partial B(0,1)$, where $n$ and $n_B$ respectively denote the normal vectors on $\partial\mathcal{S}(q)$ and $\partial B(0,1)$. Note that, since $\Psi$ is invertible, we have $|\text{det}(D\Psi(x))|>0$, for any $x\in\partial B(0,1)$.

For each $\varepsilon>0$, $i\in\{1,\ldots,16\}, j\in\{1,2,3,4\}$ (here the index $j$ belongs to $\{1,2,3,4\}$ rather  than  $\{1,2,3\}$ in order to adapt the  linear dependence argument of Lemma~\ref{Lem3emeCond} to the case of the three linear constraints \eqref{csR}), we consider families of smooth functions $\beta_{\varepsilon}^{i,j}:[0,2\pi]\to\mathbb{R}$ satisfying
$\text{supp }\beta_{\varepsilon}^{i,j}\cap\text{supp }\beta_{\varepsilon}^{k,l}=\emptyset $  for $(i,j)\neq(k,l),$
$\text{diam}\left(\text{supp }\beta_{\varepsilon}^{i,j}\right)\to 0$ as $\varepsilon\to0^{+}$,
\begin{equation*}
\int_{0}^{2\pi}\beta_{\varepsilon}^{i,j}(s)  \,  ds=0,
\end{equation*}
and
\begin{equation*}
\left|\int_{0}^{2\pi}|\beta_{\varepsilon}^{i,j}(s)|^2 \, \partial_n \Phi(q,c(s)) \frac{1}{\sqrt{|\text{det}(D\Psi(c(s)))|}}  \,\, ds-  \tilde{e}_{i}   \right|\to 0\ \text{as } \varepsilon\to0^{+} ,
\end{equation*}
where 
$$\tilde{e}_{i}:=\frac{1}{\sqrt{|\text{det}(D\Psi(c(s_i)))|}} \partial_n \Phi (q, x_i (q)).$$

Then one may proceed  essentially  as in the proof of Proposition \ref{dfarcontr}.
The details are therefore left to the reader.
%
%
%
%
%
%
%
%
%
%[ ]

\section*{Acknowledgements}

We would like to thank Jimmy Lamboley and Alexandre Munnier for helpful conversations on shape differentiation.
The authors also thank  the Agence Nationale de la Recherche, Project DYFICOLTI, grant ANR-13-BS01-0003-01 and Project IFSMACS, grant ANR-15-CE40-0010  for their financial support. 
F. Sueur  
was also partially supported by the Agence Nationale de la Recherche, Project SINGFLOWS grant ANR-18-CE40-0027-01, Project BORDS, grant ANR-16-CE40-0027-01, the Conseil R\'egionale d'Aquitaine, grant 2015.1047.CP, the Del Duca Foundation, and the H2020-MSCA-ITN-2017 program, Project ConFlex, Grant ETN-765579. 
 Furthermore, J. J. Kolumb\'an would also like to thank the Fondation Sciences Math\'ematiques de Paris for their support in the form of the PGSM Phd Fellowship.

\end{document}